\newenvironment{cproof}
{\begin{proof}
 [Proof.]
 \vspace{-1.5\parsep}
}
{ \end{proof}}
\newtheorem{theorem}{Theorem}
\newtheorem{lemma}[theorem]{Lemma}
\newtheorem{conjecture}[theorem]{Conjecture}
\newtheorem{corollary}[theorem]{Corollary}
\newtheorem{proposition}[theorem]{Proposition}
\newtheorem{claim}[theorem]{Claim}
\newtheorem{definition}[theorem]{Definition}
\numberwithin{equation}{section}
\numberwithin{theorem}{section}
\newcommand{\hh}{\hat{H}}
\newcommand{\hk}{\hat{T}_k}
\newcommand{\hj}{\hat{J}}
\begin{document}

\title{Coloring dense digraphs}

\author{Ararat Harutyunyan$^a$, Tien-Nam Le$^b$,\\ Alantha Newman$^c$, and St\'{e}phan Thomass\'{e}$^b$\\~\\
\small $^a$Institut de Math\'ematiques de Toulouse \\ \small Universit\'e Toulouse III \\ \small 31062 Toulouse Cedex 09, France\\~\\
\small $^b$Laboratoire d'Informatique du Parall\'elisme \\ \small   UMR 5668 ENS Lyon - CNRS - UCBL - INRIA
\\ \small Universit\'e de Lyon, France\\~\\
\small $^c$Laboratoire G-SCOP \\ \small CNRS, Universit\'e Grenoble-Alpes \\ \small Grenoble, France}

\date{}

\maketitle

\begin{abstract}
The \emph{chromatic number} of a digraph $D$ is the minimum number of
acyclic subgraphs covering the vertex set of $D$.  A tournament $H$ is
a \emph{hero} if every $H$-free tournament $T$ has chromatic number
bounded by a function of $H$.  Inspired by the celebrated
Erd\H{o}s--Hajnal conjecture, Berger et al. fully characterized the
class of heroes in 2013.  We extend this framework to dense
digraphs: A digraph $H$ is a \emph{superhero} if every $H$-free
digraph $D$ has chromatic number bounded by a function of $H$ and
$\alpha(D)$, the independence number of the underlying graph of
$D$. We prove here that a digraph is a superhero if and only if it is
a hero, and hence characterize all superheroes.  This answers a
question of Aboulker, Charbit and Naserasr.
\end{abstract}


\section{Introduction} \label{sec:intro}

Every digraph in this paper is simple, loopless and finite, where a
digraph $D$ is \emph{simple} if for every two vertices $u$ and $v$ of
$D$, there is at most one arc with endpoints $\{u,v\}$.  Given a
digraph $D$, we denote by $V(D)$ the vertex set of $D$. The
\emph{independence number} $\alpha(D)$ of a digraph $D$ is the
independence number of the underlying graph of $D$. A subset $X$
of $V(D)$ is \emph{acyclic} if the subgraph (i.e., subdigraph) of $D$ induced by $X$
contains no directed cycle.  A \emph{$k$-coloring} of a digraph $D$ is a
partition of $V(D)$ into $k$ acyclic sets, and the {\it chromatic number}
$ \chi(D)$ is the minimum number $k$ for which $D$ admits a $k$-coloring. 
This digraph invariant was introduced by Neumann-Lara \cite{Neu82}, and 
naturally generalizes many results on the graph chromatic number 
(see, for example, \cite{BFJKM04}, \cite{HM11}
\cite{HM11b}, \cite{HM12}, \cite{KLMR13}).

Given digraphs $D$ and $H$, we say that $D$ is \emph{$H$-free} if
there is no induced subgraph of $D$ isomorphic to $H$.  A digraph $T$
is a \emph{tournament} if there is an arc between every pair of
distinct vertices of $T$.  A tournament $H$ is a \emph{hero} if there
is a number $f(H)$ such that $ \chi(T)\le f(H)$ for every $H$-free
tournament $T$. The class of heroes was fully characterized in 2013 by
Berger et al \cite{BCC13}.

One may try to study the more general question for arbitrary digraphs:
for which digraph $H$ does there exist a constant $\varepsilon(H) > 0$
such that every $H$-free digraph $D$ satisfies $\chi(D) \leq
|V(D)|^{1-\varepsilon(H)}$?  In fact, it was conjectured in \cite{HM}
that this holds for every $H$.

\begin{conjecture}\label{conj:poly-subgraph}
For every digraph $H$, there is $\varepsilon>0$ such that if $D$ is a
$H$-free digraph, then $\chi(D) \leq |V(D)|^{1-\varepsilon}$.
\end{conjecture}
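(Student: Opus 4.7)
The plan is to mimic the Erd\H{o}s--Hajnal program for undirected graphs, of which Conjecture~\ref{conj:poly-subgraph} is the natural digraph analogue. The first reduction is the observation that it suffices to prove the ``acyclic subset'' version: if every $H$-free digraph $D$ contains an acyclic set of size at least $|V(D)|^{\delta(H)}$ for some $\delta(H)>0$, then iteratively peeling off such a set and using the recursion $T(n)\le 1+T(n-n^{\delta})$ yields $\chi(D)=O(|V(D)|^{1-\delta})$, which is precisely the polynomial bound sought. So the real task is to produce a polynomial lower bound on the largest acyclic induced subgraph of an arbitrary $H$-free digraph.

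To establish this for every $H$, I would try induction on $|V(H)|$, following the substitution technique of Alon--Pach--Solymosi. The one- and two-vertex cases are immediate. For the inductive step, I would attempt to show that if $H = H_1[v\to H_2]$ is obtained by substituting a smaller digraph $H_2$ for a vertex $v$ of a smaller digraph $H_1$, and both $H_1$ and $H_2$ already satisfy the polynomial bound, then so does $H$. The heuristic, as in the undirected case, is that in an $H$-free digraph $D$ one can iterate a Ramsey-type dichotomy: either a large induced subset of $D$ avoids $H_1$ (and induction applies), or $D$ is saturated with copies of $H_1$ and each such copy forces some distinguished vertex-set to avoid $H_2$, again enabling recursion inside a polynomially large region. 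Careful bookkeeping of the exponents across the two branches is routine once the dichotomy is in place.

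The principal obstacle, exactly as in the undirected setting, is the class of \emph{prime} digraphs which admit no non-trivial substitution decomposition. Even for prime tournaments the Erd\H{o}s--Hajnal-type statement is wide open, and the theorem of Berger et al.\ characterizing heroes only covers the much more restrictive regime in which $\chi$ is bounded by a constant rather than a polynomial. For general $H$ one has no structural handle on prime $H$, and this is precisely where every known attack on Erd\H{o}s--Hajnal has stalled. I therefore do not expect Conjecture~\ref{conj:poly-subgraph} to fall to current techniques in full generality; a realistic intermediate target is to establish it for $H$ built by repeated substitution from a short explicit list of ``atomic'' digraphs (single arcs, transitive tournaments, oriented stars, and the heroes of~\cite{BCC13}), leaving the prime case as the main open problem.
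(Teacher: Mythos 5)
This statement is Conjecture~\ref{conj:poly-subgraph}: the paper does not prove it, and explicitly records that it is open even in the smallest nontrivial case $H=C_3$, so there is no proof in the paper to compare your argument against. Your opening reduction is fine as far as it goes --- a polynomial lower bound $\beta(D)\ge |V(D)|^{\delta}$ on the largest acyclic set in every $H$-free digraph does convert, by the peeling recursion, into $\chi(D)\le |V(D)|^{1-\varepsilon}$ (and conversely $\beta(D)\ge |V(D)|/\chi(D)$, so the two formulations are polynomially equivalent). But from that point on the text is a program, not a proof, and it has two genuine gaps.

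First, the inductive step is only a heuristic: the dichotomy ``either a large induced subdigraph is $H_1$-free or $D$ is saturated with copies of $H_1$'' and the ``careful bookkeeping of the exponents'' are precisely the content that must be supplied, and one would further have to check that the Alon--Pach--Solymosi substitution argument, designed for homogeneous sets in undirected graphs, transfers to acyclic sets in digraphs. Second, and more importantly, the prime case you defer is not a peripheral difficulty that can be quarantined into an ``atomic list'': $C_3$ is itself prime and is a hero, yet Conjecture~\ref{conj:poly-subgraph} for $H=C_3$ is exactly the case the paper highlights as open. Being a hero, or even a superhero in the sense of Theorem~\ref{thm:superhero-main}, only gives $\chi(D)\le g(H,\alpha(D))$; to deduce the conjectured $|V(D)|^{1-\varepsilon}$ bound one would need $g$ to be polynomial in $\alpha$, which for $C_3$ is precisely Conjecture~\ref{conj:poly-C_3}, also open --- the best bound obtained in the paper is the factorial-type $35^{\alpha-1}\alpha!$ of Theorem~\ref{thm:c3}. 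So even the base cases of your proposed induction are unproven, and the proposal, as you yourself concede, does not establish the statement.
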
 

The conjecture is open even when $H$ is the oriented triangle, $C_3$.
In fact, it can be viewed as a strengthening of the Erd\H{o}s--Hajnal
conjecture.  For a comprehensive survey on the Erd\H{o}s--Hajnal
conjecture, we refer the reader to \cite{C14}.  Given a digraph $D$,
we denote by $\beta(D)$ the maximum number of vertices of an acyclic
subset of $V(D)$. The following is one formulation of the
Erd\"os--Hajnal conjecture.

\begin{conjecture}\label{conj:EH}
For every tournament $H$, there exists a constant $\varepsilon(H)>0$
such that every $H$-free tournament $T$ satisfies $\beta(T) \ge
|V(T)|^{\varepsilon(H)}$.
\end{conjecture}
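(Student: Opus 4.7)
The statement is the tournament form of the Erd\H{o}s--Hajnal conjecture, a notoriously open problem known to be equivalent to the classical Erd\H{o}s--Hajnal conjecture by a reduction of Alon, Pach, and Solymosi. My plan is therefore necessarily speculative; what follows describes a plausible line of attack via structural induction, in the spirit of the hero characterization of Berger et al.\ cited above.

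The main idea is to induct on $|V(H)|$ using the substitution operation on tournaments. For the base case one handles small transitive tournaments, where a logarithmic-size transitive subset is immediate from the classical bound $\beta(T)\ge \log_2 |V(T)|$; the real wall is hit already at the oriented triangle $C_3$ and the first nontransitive cases, where even the weaker Conjecture~\ref{conj:poly-subgraph} is open. One would attempt these by ad hoc arguments exploiting the very small size of the forbidden pattern.

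For the inductive step, suppose $H$ is a nontrivial substitution $H_1[H_2,\dots,H_n]$, and let $T$ be an $H$-free tournament on $N$ vertices. I would set up a dichotomy: either $T$ admits a partition $V_1,\dots,V_n$ whose inter-part orientation pattern mimics $H_1$, in which case some $V_i$ must be $H_i$-free (otherwise copies of the $H_i$ recombine into a copy of $H$), and induction on the smaller tournament $H_i$ gives a transitive subset of polynomial size inside $V_i$; or no such partition exists, so $T$ looks pseudorandom in a suitable sense. In the first horn one combines the inductive polynomial bound inside a single part with a polynomial transitive set across the quotient, using that polynomials compose to polynomials.

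The main obstacle is the pseudorandom horn, and it is precisely the obstruction that has kept Erd\H{o}s--Hajnal open since 1989. For a quasirandom host, forbidding a fixed $H$ provides almost no local structural leverage, and known regularity or counting arguments yield only transitive sets of size $\exp(c\sqrt{\log N})$, well short of the polynomial bound sought. Any genuine proof of Conjecture~\ref{conj:EH} would require a fundamentally new idea, perhaps one specific to the directed setting or to the recursive substitution structure underlying the hero theorem.
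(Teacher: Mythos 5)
There is nothing to compare here, because the statement you were asked about is not proved in the paper at all: Conjecture~\ref{conj:EH} is the Erd\H{o}s--Hajnal conjecture in its tournament form, stated purely as background and motivation (see the survey \cite{C14}), and it remains open. Your proposal is honest about this, but it is a sketch of an attack rather than a proof, and the gap you yourself flag is fatal. The inductive scheme via substitution is essentially the known reduction showing that the Erd\H{o}s--Hajnal property is preserved under substitution; that only handles tournaments $H$ that decompose nontrivially as $H_1[H_2,\dots,H_n]$. The generic case is a \emph{prime} tournament $H$, for which no such partition of the host $T$ ``mimicking $H_1$'' exists to set up your dichotomy, and in the pseudorandom horn you offer no mechanism at all --- as you note, known counting and regularity arguments give transitive sets only of size $\exp(c\sqrt{\log N})$, which is far below the required polynomial bound. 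So the proposal does not close, and could not be expected to close, the statement.

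Two smaller points. First, your ``base case'' remark conflates Conjecture~\ref{conj:EH} with Conjecture~\ref{conj:poly-subgraph}: in the tournament setting a $C_3$-free tournament is transitive, so $\beta(T)=|V(T)|$ and that case is trivial for Conjecture~\ref{conj:EH}; it is the digraph statement (Conjecture~\ref{conj:poly-subgraph}) whose $C_3$ case is open, and the genuinely hard tournament instances are small prime tournaments, not $C_3$. Second, in the first horn of your dichotomy the claim that some part $V_i$ ``must be $H_i$-free, otherwise copies of the $H_i$ recombine into a copy of $H$'' is not correct as stated: copies of $H_2,\dots,H_n$ sitting in different parts only yield a copy of $H$ if the arcs \emph{between} those copies are oriented exactly according to $H_1$, which the mere existence of an $H_1$-patterned partition does not guarantee; making this precise is exactly what the Alon--Pach--Solymosi substitution argument does, and it requires a polynomial counting step, not just the recombination claim. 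None of this bears on the paper itself, whose actual results (Theorems~\ref{thm:superhero-main}, \ref{thm:global}, \ref{thm:c3}) concern bounded chromatic number for $H$-free digraphs with bounded independence number and are proved unconditionally, independently of Conjecture~\ref{conj:EH}.
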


Motivated by Conjecture \ref{conj:poly-subgraph}, we say that a
digraph $H$ is a \emph{superhero} if for every integer $\alpha \ge 1$,
there is a number $g(H,\alpha)$ such that $ \chi(D)\le g(H,\alpha)$
for every $H$-free digraph $D$ with $\alpha(D)\le \alpha$.  There are
several remarks. First, if a tournament $H$ is a superhero, then $H$
is a hero by definition.  Second, if $H$ is not a tournament, then $H$
is not a superhero. Indeed, one can easily construct a tournament $T$
with arbitrarily high chromatic number (see Lemma~\ref{lem:mountain}
for example). Note that $H$ contains two non-adjacent vertices.  Hence
$T$ contains no induced isomorphic copy of $H$, and so $H$ is not a
superhero.  From two remarks above, it follows that every superhero is
a hero.  Aboulker, Charbit and Naserasr \cite{A} conjectured that the
converse is also true.  The main result of this paper is the
affirmation of this fact.

\begin{theorem}\label{thm:superhero-main}
Every hero is a superhero.
\end{theorem}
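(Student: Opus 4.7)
The plan is to prove Theorem~\ref{thm:superhero-main} by induction on the construction of heroes, using the recursive characterization of Berger et al.~\cite{BCC13}: every hero is either the single vertex $K_1$ or obtained by a three-part cyclic composition (a $\Delta$-operation) from strictly smaller heroes, together possibly with transitive tournaments. We strengthen the conclusion to an explicit bound $g(H,\alpha)$ and induct on $|V(H)|$, assuming that every hero with fewer vertices than $H$ has already been shown to be a superhero.

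The base case $H=K_1$ is trivial, since an $H$-free digraph is empty. For the inductive step, suppose $H$ is obtained by $\Delta$-composition from smaller heroes $H_1,H_2$ (possibly with a transitive tournament $T_k$), and let $D$ be an $H$-free digraph with $\alpha(D)\le\alpha$. The aim is to show $\chi(D)\le g(H,\alpha)$, and I will argue by contradiction: assume $\chi(D)$ is much larger than a threshold to be determined, and produce an induced copy of $H$ inside $D$. Since $H_1,H_2$ are superheroes by the inductive hypothesis, every induced subdigraph of $D$ with chromatic number exceeding $g(H_i,\alpha)$ contains an induced $H_i$. A greedy peeling argument (repeatedly discarding one low-chromatic piece) then produces arbitrarily many vertex-disjoint induced copies of each $H_i$ inside $D$.

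The heart of the argument is an \emph{alignment} step: from the many available copies, extract one copy of $H_1$, one of $H_2$, and (if needed) a suitable transitive tournament, such that all arcs between the three parts respect the cyclic orientation required to form an induced $H$. In the purely tournament setting treated by Berger et al., alignment is handled by a Ramsey argument on arc directions between subsets. In our general digraph setting, non-edges between the copies threaten to block a direct Ramsey extraction, and this is where the hypothesis $\alpha(D)\le\alpha$ becomes essential: among any $\alpha+1$ copies at least one pair is fully adjacent, so an iterated Ramsey--pigeonhole extraction can simultaneously control both the adjacency pattern and the arc orientations between the chosen copies. I expect the main obstacle to be exactly this multi-level Ramsey extraction, carried out quantitatively so that the resulting function $g(H,\alpha)$ stays finite and depends only on $H$ and $\alpha$; the bounds lost at each inductive stage must be absorbed by the next Ramsey number without spiralling out of control.
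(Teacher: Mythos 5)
Your top-level plan---structural induction along the Berger et al.\ construction of heroes---is indeed the paper's framework: Theorem~\ref{thm:superhero-main} is reduced to closure of superheroes under $H_1\Rightarrow H_2$ (Theorem~\ref{lem:HH-main}) and under $\Delta(H,T_k,T_1)$, $\Delta(H,T_1,T_k)$ (Theorem~\ref{lem:h1k-main}). (Note that you also misstate the construction slightly: the $\Delta$-step requires one part to be $T_1$ and another a transitive $T_k$; a $\Delta$-composition of two arbitrary smaller heroes is in general \emph{not} a hero.) The problem is the inductive step itself. Your key alignment claim, that ``among any $\alpha+1$ copies at least one pair is fully adjacent,'' is false: bounded independence number constrains single vertices, not blocks. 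For instance, take disjoint copies $A_1,\dots,A_N$, each containing two adjacent vertices $a_i^1,a_i^2$, and make $a_i^1$ non-adjacent to $a_j^2$ for all $i<j$ while all other cross pairs are adjacent; every independent set meets each copy at most once and has size at most $2$, yet no two copies are fully adjacent, for arbitrarily large $N$. So bounded $\alpha$ alone cannot drive the extraction.

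More fundamentally, even in the tournament case $\alpha=1$, where full adjacency is automatic, ``many vertex-disjoint copies of $H_1$ and $H_2$ plus Ramsey on arc directions'' cannot yield an induced $H_1\Rightarrow H_2$: Ramsey-type arguments between fixed blocks give majority or pattern information, never a pair of copies with \emph{all} $|H_1|\,|H_2|$ cross arcs oriented the same way, and if such an extraction worked it would show heroes are closed under arbitrary $\Delta$-compositions, contradicting the characterization in \cite{BCC13}. This is exactly why the paper does something much more structured: it first proves the local-to-global theorem (Theorem~\ref{thm:local-global-main}), that digraphs with bounded independence number whose out-neighbourhoods all have bounded chromatic number have bounded chromatic number; then for $H_1\Rightarrow H_2$ it runs the mountain/$(r,s)$-clique machinery (find a small set $Q$ of huge chromatic number and classify the remaining vertices by their adjacency pattern to $Q$), and for $\Delta(H,T_k,T_1)$ it builds a maximal chain of bags together with zones and the red/blue ball argument. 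In all of these, the copies of $H_1$, $H_2$, $T_k$ assembled into the forbidden digraph are located inside common in/out-neighbourhoods of already-placed structures (sets such as $M^+(\hat{H}_1)\cap N^+$), whose chromatic number is shown to be large by subtraction---they are never pulled from an unstructured pool of disjoint copies. The missing ideas are these structural decompositions, and the gap is not a quantitative one that a larger Ramsey number can absorb.
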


Given tournaments $H_1,H_2,H_3$, we denote by $H_1\Rightarrow H_2$ the vertex-disjoint union of $H_1$ and $H_2$ with complete arcs from $H_1$ to $H_2$, and $\Delta(H_1,H_2,H_3)$ the vertex-disjoint union of $H_1,H_2,H_3$ with complete arcs from $H_1$ to $H_2$, from $H_2$ to $H_3$, and from $H_3$ to $H_1$.
For every integer $k\ge 1$, let $T_k$ denote the \emph{transitive} tournament on $k$ vertices, i.e. the acyclic tournament on $k$ vertices.
The class of heroes are constructed in \cite{BCC13} as follows:
\begin{itemize}
\item The singleton $T_1$ is a hero.
\item If $H_1$ and $H_2$ are heroes, then $H_1\Rightarrow H_2$ is a hero.
\item If $H$ is a hero, then $\Delta(H,T_k,T_1)$ and $\Delta(H,T_1,T_k)$ are heroes for every $k\ge 1$.
\end{itemize}

The main result of \cite{BCC13} is that any tournament that cannot be
constructed by this process is not a hero. Obviously $T_1$ is a
superhero.  Thus to prove Theorem~\ref{thm:superhero-main}, it
suffices to prove the following two theorems.

\begin{theorem}\label{lem:HH-main}
If $H_1$ and $H_2$ are superheroes, then  $H_1\Rightarrow H_2$ is a superhero.
\end{theorem}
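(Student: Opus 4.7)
The plan is to reduce Theorem~\ref{lem:HH-main} to a chromatic-dominance lemma for superheroes, parallel to the key lemma that underpins the hero theorem of Berger--Choromanski--Chudnovsky. The \emph{Key Lemma} I aim for is: for every superhero $H$ and all integers $\alpha, c \geq 1$, there exists $F(H,\alpha,c)$ such that every simple digraph $D$ with $\alpha(D) \leq \alpha$ and $\chi(D) \geq F(H,\alpha,c)$ contains an induced copy $H'$ of $H$ whose common out-neighborhood $\bigcap_{u \in V(H')} N^+(u)$ has chromatic number at least $c$. Given this, Theorem~\ref{lem:HH-main} follows in one line: for an $(H_1 \Rightarrow H_2)$-free digraph $D$ with $\alpha(D) \leq \alpha$, apply the Key Lemma to $H_1$ with $c := g(H_2, \alpha) + 1$ (using $H_2$'s superhero bound); the common out-neighborhood of the resulting $H_1'$ must be $H_2$-free, since any induced $H_2$ inside, together with $H_1'$, would realise an induced copy of $H_1 \Rightarrow H_2$. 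By the superhero property of $H_2$, its chromatic number is at most $g(H_2,\alpha) < c$, contradicting the Key Lemma, and forcing $\chi(D) < F(H_1, \alpha, g(H_2,\alpha)+1)$, which is the required superhero bound.

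I would prove the Key Lemma by induction on $\alpha$. The base case $\alpha = 1$ reduces $D$ to a tournament, and the statement is then exactly the Berger--Choromanski--Chudnovsky dominance lemma from \cite{BCC13}, applicable because every superhero is in particular a hero. For the inductive step, fix any vertex $v$ and partition $V(D) = \{v\} \cup N^+(v) \cup N^-(v) \cup N^0(v)$. Since $\{v\}$ extends every independent set inside the non-neighborhood $N^0(v)$, one has $\alpha(D[N^0(v)]) \leq \alpha - 1$. If some $v$ satisfies $\chi(D[N^0(v)]) \geq F(H,\alpha-1,c)$, the inductive hypothesis applied to $D[N^0(v)]$ yields an induced $H'$ whose common out-neighborhood inside $N^0(v)$, and a fortiori inside $D$, has chromatic number at least $c$, and we are done.

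The main obstacle is the complementary case, where $\chi(D[N^0(v)]) < F(H,\alpha-1,c)$ holds uniformly in $v$. Combined with the standard inequality $\chi(D) \leq 1 + \chi(N^+(v)) + \chi(N^-(v)) + \chi(N^0(v))$, this forces some $v$ to have $\chi(N^+(v))$ or $\chi(N^-(v))$ very large, yet $\alpha$ does not drop inside those neighborhoods, so a direct recursion is unavailable. The approach I would pursue is an iterative extraction: repeatedly select a vertex with highly chromatic out-neighborhood, record it as a witness, and descend into a subdigraph chosen so that a new independent-set slot opens at each step. Because any collection of accumulated witnesses ultimately forms (together with a suitable new vertex) an independent set of $D$, the bound $\alpha(D) \leq \alpha$ caps the depth of this process, forcing termination with either the required dominated copy $H'$ exposed or the inductive hypothesis becoming applicable. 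To treat $N^+(v)$ and $N^-(v)$ symmetrically one can run the extraction simultaneously in $D$ and in its reverse $D^R$, using that $H$ is a superhero if and only if $H^R$ is. The delicate bookkeeping — keeping the chromatic loss bounded by a function of $H, \alpha, c$ while the independent-set witness grows by one at each step — is where the technical weight of the proof will lie.
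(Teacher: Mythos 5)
Your reduction of Theorem~\ref{lem:HH-main} to the proposed Key Lemma is sound: if an induced copy $H_1'$ of $H_1$ has $M^+(H_1')$ of chromatic number exceeding the superhero bound for $H_2$, then $M^+(H_1')$ contains an induced $H_2$ and hence $D$ contains an induced $H_1\Rightarrow H_2$. The problem is that the Key Lemma itself is left essentially unproven, and it is not a citable result. Your base case $\alpha=1$ is attributed to a ``dominance lemma'' in \cite{BCC13}, but no such lemma appears there: even its weakest instance, $H=T_1$ (every tournament of large chromatic number has a vertex whose out-neighbourhood has large chromatic number), is precisely Conjecture~2.6 of \cite{BCC13}, settled only later in \cite{CKLST} (for $k=2$) and \cite{HLNT17+}; for a general hero $H$ the statement is a further strengthening that is not in the cited literature and would need its own proof (note that ``$\chi(M^+(H'))<c$ for every copy $H'$'' does not make $D$ free of any fixed $H\Rightarrow H_2$, since a set of small chromatic number can still contain arbitrarily large transitive tournaments, so the hero theorem cannot be invoked directly). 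The inductive step has a second, independent gap: in the complementary case you only sketch an ``iterative extraction,'' and its crucial termination claim --- that the accumulated witnesses together with a new vertex form an independent set, so the depth is at most $\alpha$ --- is never justified. Descending into $N^+(v)$ or $N^-(v)$ does not decrease the independence number, and vertices with highly chromatic out-neighbourhoods chosen at successive stages have no reason to be pairwise non-adjacent; this bookkeeping is exactly where the difficulty of the theorem lives, and as written it is missing.

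For comparison, the paper avoids any such dominance statement. It proves the result by induction on $\alpha$ using the $r$-mountain and $(r,s)$-clique machinery of \cite{BCC13}: Lemmas~\ref{lem:mount-min} and \ref{lem:mount} show (via the local-to-global Theorem~\ref{thm:local-global-main}, which is the only ``local implies global'' input needed, i.e.\ only the $H=T_1$ case of your Key Lemma) that an $(H_1\Rightarrow H_2)$-free digraph with bounded independence number and no large mountain has bounded chromatic number; and if a large mountain $Q$ exists, it serves as a small pivot set of high chromatic number, so that each of the $3^{|Q|}$ classes $Y_{Q_0,Q_1,Q_2}$ of outside vertices is forced to have chromatic number at most $c_1$ (Claim~\ref{cl:max-mount}). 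In effect, the paper replaces your global dominance lemma by a small certificate ($Q$) found inside the digraph, which is what makes the argument go through with only Theorem~\ref{thm:local-global-main} as external leverage. If you want to pursue your route, you should expect the Key Lemma to require an induction over the construction of the hero $H$ (it already holds for $H=T_k$ by iterating Theorem~\ref{thm:local-global-main}, but already for $H=C_3$ it is not routine), i.e.\ roughly the same amount of work as the theorem you are trying to prove.
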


\begin{theorem}\label{lem:h1k-main}
If $H$ is a superhero, then $\Delta(H,T_1,T_k)$ and $\Delta(H,T_k,T_1)$ are superheroes for any $k\ge 1$.
\end{theorem}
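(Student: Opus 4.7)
The plan is to prove the case $\Delta(H,T_k,T_1)$; the case $\Delta(H,T_1,T_k)$ then follows by reversing every arc of the host digraph (which preserves both $\alpha$ and $\chi$). Set $H_k := \Delta(H,T_k,T_1)$ and suppose $H$ is a superhero. Note that iterating Theorem~\ref{lem:HH-main} on the trivial superhero $T_1$ shows that $T_k$ is a superhero, and hence so are $H \Rightarrow T_k$ and $T_k \Rightarrow H$; these facts will be used repeatedly.

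The proof will be a double induction: an outer induction on the independence bound $\alpha$ (the base case $\alpha = 1$ is the tournament case, handled directly by the hero theorem of \cite{BCC13}) and an inner induction on $k$. For the inner base $k=1$, the point is that $H_1$-freeness forces $D[N^+(w) \cap N^-(u)]$ to be $H$-free for every arc $u \to w$ of $D$, so each such subgraph has chromatic number at most $g(H,\alpha)$; this local bound is globalized using the same vertex-decomposition as in the inductive step. For the inductive step proper, let $D$ be an $H_k$-free digraph with $\alpha(D) \le \alpha$. Pick a vertex $v \in V(D)$ and write $A := N^+(v)$, $B := N^-(v)$, and $M := V(D) \setminus (A \cup B \cup \{v\})$. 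One then uses
\[
\chi(D) \le 1 + \chi(D[A]) + \chi(D[B]) + \chi(D[M]).
\]
Since $v$ is non-adjacent to every vertex of $M$, any independent set of $D[M]$ extends by $v$, giving $\alpha(D[M]) \le \alpha - 1$, so the outer induction bounds $\chi(D[M])$.

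The heart of the argument is the bound on $\chi(D[B])$ (symmetrically on $\chi(D[A])$). The structural consequence of $H_k$-freeness that drives this is the following. Suppose $D[B]$ contains an induced copy of $H_{k-1}$ on sets $(H^*, T^*, w^*)$, meaning $H^* \to T^* \to w^* \to H^*$ are complete and $T^*$ is a transitive $T_{k-1}$. Since $T^* \subseteq B = N^-(v)$, the set $T^* \cup \{v\}$ is a transitive $T_k$ (with $v$ as sink), and $H^* \to T^* \cup \{v\}$ is complete (because $H^* \subseteq B$ forces $H^* \to v$). The only missing piece of an induced $H_k \subseteq D$ is a vertex $w' \in A$ with $T^* \to w'$ and $w' \to H^*$, so $H_k$-freeness forbids such a $w'$ for every copy of $H_{k-1}$ sitting inside $B$. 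One then argues that either $D[B]$ is itself $H_{k-1}$-free, in which case $\chi(D[B])$ is bounded by the inner inductive hypothesis, or the resulting ``no-extender'' constraint restricts the bipartite arc-pattern from $A$ to $B$ enough that $\chi(D[A])$ is controlled via the superhero bounds for $H \Rightarrow T_k$ and $T_k \Rightarrow H$ provided by Theorem~\ref{lem:HH-main}.

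The main obstacle is precisely this no-extender step: converting the absence of completing vertices $w'$ into a usable chromatic bound on $D[A]$. The reduction is not mechanical because $v$ itself cannot replace any vertex of a putative $H_k$ — its arcs to $H^*$ point the wrong way — so simply deleting $v$ does not decrease $k$. The cleanest route I expect is to choose $v$ extremally (for instance, maximizing $\chi(D[N^-(v)])$ subject to a secondary balancing condition) and to combine the no-extender condition with a Ramsey-type step that exploits $\alpha(D) \le \alpha$ together with the superhero properties of $H \Rightarrow T_k$ and $T_k \Rightarrow H$ on suitable induced subgraphs of $A$. Provided this extremal argument goes through, plugging the individual bounds on $\chi(D[A])$, $\chi(D[B])$, and $\chi(D[M])$ into the displayed inequality closes the induction and yields the desired value of $g(H_k,\alpha)$.
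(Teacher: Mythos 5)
Your proposal is not a proof: the step you yourself flag as ``the main obstacle'' is precisely the mathematical content of the theorem, and it is left unresolved. The no-extender observation (no $w'\in N^+(v)$ with $T^*\cup\{v\}\Rightarrow w'\Rightarrow H^*$ for a copy of $H\Rightarrow T_{k-1}$ inside $N^-(v)$) only says that $N^+(v)$ avoids the single set $M^+(T^*)\cap M^-(H^*)$; vertices of $N^+(v)$ can avoid it in many ways (missing one vertex of $T^*$ or of $H^*$, or being non-adjacent to one of them), and nothing in the proposal converts this into a bound on $\chi(N^+(v))$. Worse, the overall scheme $\chi(D)\le 1+\chi(N^+(v))+\chi(N^-(v))+\chi(M)$ requires bounding \emph{both} neighborhoods of one (even extremally chosen) vertex, and $\Delta(H,T_k,T_1)$-freeness does not give this: the typical conclusion one can extract from such freeness arguments (compare Claim~\ref{cl:thin} in the $H_1\Rightarrow H_2$ case) is that for each vertex at least \emph{one} of the two neighborhoods has bounded chromatic number, which is useless for your displayed inequality unless it is fed into the local-to-global Theorem~\ref{thm:local-global-main} (and its reversal, Theorem~\ref{thm:local-global-main-in}) --- a tool your plan never invokes. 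The phrase ``provided this extremal argument goes through'' is an admission that the theorem has not been proved; there is no evidence the single-vertex decomposition can be made to work at all.

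For comparison, the paper's proof has a very different and much heavier structure: it takes a maximum-length chain of $c$-bags (sets of chromatic number $c$ with controlled backward interactions), proves that every bag interacts boundedly with all earlier and later bags, partitions the remaining vertices into zones, shows by maximality that no zone contains a bag-chain of length $6$, and then proves separately (Lemma~\ref{lem:h1k-2chain}) that a $\Delta(H,T_k,T_1)$-free digraph with bounded independence number and no bag-chain of length $2$ has bounded chromatic number --- this last step colors copies of $J=H\Rightarrow T_k$ (``balls'') red or blue according to whether $M^+$ or $M^-$ has small chromatic number, bounds the uncolored vertices via freeness of a long chain $J\Rightarrow\cdots\Rightarrow J$, and bounds the red and blue vertex classes by Theorems~\ref{thm:local-global-main} and~\ref{thm:local-global-main-in}. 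Your inner induction on $k$ plays no role in that argument, and the arc-local observation you make for $k=1$ (that $N^-(u)\cap N^+(w)$ is $H$-free for every arc $uw$) also cannot be ``globalized'' by the same missing vertex decomposition. The reduction of $\Delta(H,T_1,T_k)$ to $\Delta(H,T_k,T_1)$ by reversal is fine (it matches the paper, modulo replacing $H$ by $H_{rev}$), but the core case is not established.
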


One may inquire how large $g(H, \alpha)$ needs to be for particular
digraphs $H$.  For digraphs not containing an oriented triangle, we
believe that the following statement may be true.

\begin{conjecture}\label{conj:poly-C_3}
There is an integer $\ell$  such that if $D$ is a $C_3$-free digraph
with $\alpha(D)\le \alpha$, then $ \chi(D)\le \alpha^\ell$.
\end{conjecture}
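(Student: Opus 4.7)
The plan is to attack Conjecture \ref{conj:poly-C_3} by induction on $\alpha(D)$, exploiting the strong local structure that $C_3$-freeness forces. Since digraphs here are simple, a directed 3-cycle $u\to v\to w\to u$ is automatically induced, so $D$ is $C_3$-free if and only if $D$ contains no directed triangle at all. The key structural consequence is that, for every vertex $v$, every arc between $N^+(v)$ and $N^-(v)$ is directed from $N^-(v)$ to $N^+(v)$: an arc $u\to w$ with $u\in N^+(v)$ and $w\in N^-(v)$ would close a triangle $v\to u\to w\to v$. Note also that every clique in the underlying graph $G$ forms a transitive subtournament of $D$, so $\beta(D)\ge\omega(G)$, which is a Ramsey-type coupling one may want to exploit.

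The base case $\alpha=1$ is immediate: the underlying graph is complete, $D$ is a tournament, and every $C_3$-free tournament is transitive, so $\chi(D)=1$. For $\alpha\ge 2$, pick a vertex $v$ and split $V(D)\setminus\{v\}$ into $N^+(v)$, $N^-(v)$, and the non-neighbors $N^0(v)$. Observe that $\alpha(N^0(v))\le \alpha-1$ since $v$ extends any independent set of non-neighbors, so by the outer induction $N^0(v)\cup\{v\}$ can be colored with $(\alpha-1)^\ell$ colors. On the neighborhood side, the one-directional cross-arcs let us merge color classes: given $c$-colorings of $N^+(v)$ and $N^-(v)$, the pairwise unions are acyclic because they admit a topological order with $N^-(v)$ first, giving $\chi(N^+(v)\cup N^-(v))\le\max(\chi(N^+(v)),\chi(N^-(v)))$. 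If one could always find a vertex $v$ with $\alpha(N^+(v))<\alpha$ and $\alpha(N^-(v))<\alpha$, the outer induction would close the argument; this is the natural target of a nested induction on $|V(D)|$, where either some vertex is good, or every vertex packs a maximum independent set into its neighborhood and one extracts additional structure from that rigidity.

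The main obstacle is that this additive neighborhood recursion is fundamentally too weak to achieve a polynomial bound in $\alpha$. Even granting each of $\chi(N^+(v)),\chi(N^-(v)),\chi(N^0(v)\cup\{v\})\le (\alpha-1)^\ell$, a naive union of disjoint palettes yields $\chi(D)\le 2(\alpha-1)^\ell$, and $2(\alpha-1)^\ell\le \alpha^\ell$ fails for any fixed $\ell$ once $\alpha$ is large enough---the additive factor of $2$ at each layer compounds into an exponential blow-up in $\alpha$. Reaching a polynomial bound therefore almost certainly requires a global, non-additive step: either a polynomial Erd\H{o}s--Hajnal phenomenon for $C_3$ (extracting an acyclic set of size $n^{\Omega(1)}$ and iterating deletions to produce a coloring) or a decomposition that avoids summing chromatic numbers across sub-digraphs entirely. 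Identifying the right global invariant that controls $\chi$ under the $C_3$-free constraint, rather than paying an additive cost per neighborhood split, is in my view where the conjecture becomes genuinely difficult.
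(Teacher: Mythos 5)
You are attempting to prove a statement that the paper itself presents only as a conjecture (Conjecture \ref{conj:poly-C_3}) and explicitly leaves open: the paper's actual result in this direction is Theorem \ref{thm:c3}, the much weaker factorial bound $\chi(D)\le 35^{\alpha-1}\alpha!$, proved in Section \ref{sec:triangle-free} by an algorithmic decomposition into \emph{poor bags}, \emph{chains of bags} and \emph{zones}, with induction on $\alpha$. Your sketch does not close the gap either, and you say so yourself; but it is worth being precise about where it falls short even of a weaker bound. Your structural observation is correct: in a $C_3$-free digraph every arc between $N^+(v)$ and $N^-(v)$ goes from $N^-(v)$ to $N^+(v)$, so $N^-(v)\to N^+(v)$ and $\chi(N^+(v)\cup N^-(v))=\max\bigl(\chi(N^+(v)),\chi(N^-(v))\bigr)$; this is exactly the kind of forward-arc structure the paper exploits (Claim \ref{cl:c3-wit}). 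However, your induction does not actually recurse: $\alpha(N^+(v))$ and $\alpha(N^-(v))$ can still equal $\alpha$, the existence of a vertex $v$ with both strictly smaller is not established, and the fallback case (``every vertex packs a maximum independent set into its neighborhood'') is never analyzed. As written, the argument therefore yields no bound at all, not merely a non-polynomial one.

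The paper's route around this is different in kind: rather than recursing on a single vertex's neighborhood, it finds (via {\sc Find-Chain}) a maximal chain of poor bags $B_1,\dots,B_t$ with $B_i\to B_j$ for $i<j$, partitions the remaining vertices into zones $Z_i$ with the one-way properties of Claim \ref{cl:c3-wit}, and shows that poor bags, non-bags, and zones can all be split into $O(\alpha)$ pieces of independence number at most $\alpha-1$ (Claims \ref{cl:notbag}, \ref{cl:rho}, \ref{cl:c3-zone}); the forward-arc structure lets color palettes be reused along the chain instead of summed. This pays a multiplicative $O(\alpha)$ factor per step of the induction on $\alpha$, which is exactly why the paper lands at $35^{\alpha-1}\alpha!$ and not at a polynomial. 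Your closing diagnosis---that any scheme paying an additive or multiplicative cost per reduction of $\alpha$ cannot reach $\alpha^{\ell}$, and that a genuinely global invariant or a polynomial Erd\H{o}s--Hajnal-type extraction is needed---is accurate and consistent with the paper's own discussion; but it is a diagnosis of difficulty, not a proof, so the conjecture remains unproven by both your sketch and the paper.
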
 
Indeed, if Conjecture \ref{conj:poly-C_3} is true, then every
$C_3$-free digraph $D$ either has an independent (hence, acyclic) set
of size $n^{1/2\ell}$ or has chromatic number at most
$(n^{1/2\ell})^\ell=\sqrt{n}$, and hence has an acyclic set of size
$\sqrt{n}$. Consequently, the special case of Conjecture
\ref{conj:EH} when $H=C_3$ would hold for $\varepsilon:=1/2\ell$.

While targeting a polynomial bound for chromatic number of $C_3$-free digraphs, we could not even achieve an exponential bound. However, by an algorithmic approach, we are able to obtain a factorial bound.

\begin{theorem}\label{thm:c3}
If $D$ is a $C_3$-free digraph with $\alpha(D)\le \alpha$, then $
\chi(D) \leq 35^{\alpha-1}\alpha!$ and such a coloring can be found in
polynomial time.
\end{theorem}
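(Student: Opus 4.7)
I would proceed by induction on $\alpha$, designing a polynomial-time recursive algorithm. The base case $\alpha=1$ is immediate: a digraph with $\alpha(D)=1$ has complete underlying graph and so is a tournament; being $C_3$-free forces it to be transitive, giving $\chi(D)=1=35^{0}\cdot 1!$ via a topological sort.

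For the inductive step, set $f(k):=35^{k-1}k!$, and suppose the bound holds for independence number $\alpha-1$. The goal is to show $\chi(D)\le 35\alpha\cdot f(\alpha-1)=f(\alpha)$. Pick any vertex $v\in V(D)$ and partition $V(D)\setminus\{v\}$ into $A:=N^{+}(v)$, $B:=N^{-}(v)$, and $C:=V(D)\setminus N[v]$. Two structural facts from $C_3$-freeness will drive everything: first, there is \emph{no arc from $A$ to $B$}, since an arc $a\to b$ with $a\in A$, $b\in B$ would close the induced directed triangle $v\to a\to b\to v$; second, $\alpha(D[C])\le \alpha-1$, because $\{v\}$ together with any independent set in $D[C]$ is independent in $D$. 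Apply the inductive hypothesis to $D[C]$ to obtain, in polynomial time, a coloring of $C$ with at most $f(\alpha-1)$ acyclic classes, on a fresh palette.

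It remains to color $\{v\}\cup A\cup B$ using the remaining $(35\alpha-1)f(\alpha-1)$ colors. The plan is to iterate the "pick-a-vertex, split, exploit the missing $A\to B$ arcs" subroutine inside this neighborhood. At each of the (up to $\alpha-1$) layers of the inner iteration, one selects a new center vertex, applies the three-way split, and uses the $B\to A$-only cross structure (together with the fact that every clique in the underlying graph of $D$ induces a transitive, hence acyclic, sub-tournament by $C_3$-freeness) to extract at most $35$ structured "blocks", each of which is a $C_3$-free sub-digraph of independence number at most $\alpha-1$ and so is recursively coloured with $f(\alpha-1)$ colours by the inductive hypothesis. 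Accumulating $35$ blocks per layer over $\alpha$ layers and $f(\alpha-1)$ colours per block yields the desired $35\alpha\cdot f(\alpha-1)=f(\alpha)$ bound.

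\textbf{Main obstacle.} The delicate step is pinning down the constant $35$ per layer: a general partition of a $C_3$-free digraph with $\alpha(D)\le\alpha$ into $O(\alpha)$ sub-digraphs of independence number at most $\alpha-1$ \emph{fails} (e.g.\ for a complete bipartite digraph $X\to Y$ with $\alpha=2$, any such partition requires $\Omega(n)$ parts), so the argument must use the $C_3$-free constraint and the "no $A\to B$" pattern in a strong way. I expect the proof of the $35$ to rest on a finite structural/Ramsey-style case analysis of small $C_3$-free configurations around $v$, combined with the transitive-tournament-on-cliques observation. The polynomial running time is immediate, since each iteration does $\mathrm{poly}(n)$ work and the total recursion depth is $O(\alpha)=O(n)$.
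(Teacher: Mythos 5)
Your base case and your first reduction are fine: $\alpha(N^o(v))\le\alpha-1$, so the non-neighbours of a chosen vertex can be coloured by induction, and $C_3$-freeness does forbid arcs from $N^+(v)$ to $N^-(v)$, so $\chi\big(N^+(v)\cup N^-(v)\big)=\max\big(\chi(N^+(v)),\chi(N^-(v))\big)$. But the heart of the theorem is exactly what comes next, and there your argument has a genuine gap which you yourself flag: you assert that an inner iteration of ``pick a centre, split, exploit the missing cross arcs'' runs for at most $\alpha-1$ layers and yields at most $35$ blocks per layer, each of independence number at most $\alpha-1$, with no mechanism for either claim. The difficulty is that $N^+(v)$ and $N^-(v)$ may still have independence number $\alpha$, so the induction hypothesis does not apply to them, and re-splitting inside $N^+(v)$ has no progress measure tied to $\alpha$: the successive centres are pairwise adjacent and hence form a transitive tournament, which can have length up to $n$, so the iteration can run for $\Theta(n)$ layers, each peeling off a non-neighbourhood that needs a fresh palette of $f(\alpha-1)$ colours unless palettes are reused across layers --- and that reuse requires exactly the kind of global ordering argument your sketch does not supply. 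Your ``main obstacle'' paragraph concedes that the per-layer constant is unexplained, so what you have is a plan with the central step missing rather than a proof.

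The paper closes this gap with a different mechanism. It defines a \emph{bag} as a set $B$ such that every triple of vertices outside $B$ has a common neighbour in $B$; when a set fails to be a bag, the witnessing triple $\{x,y,z\}$ splits it into three pieces of independence number at most $\alpha-1$ (Claim~\ref{cl:notbag}) --- this, not a Ramsey-style analysis around a single vertex, is the source of the constant. A \emph{poor} bag (every vertex of which has a non-bag in- or out-neighbourhood inside the bag) is coloured with $8\alpha\cdot g(\alpha-1)$ colours via a stable quasi-dominating set of size at most $\alpha$ (Claim~\ref{cl:rho}). Finally, {\sc Find-Chain} produces a maximal chain of poor bags with associated zones, and $C_3$-freeness forces all arcs between sufficiently far-apart bags and zones to point forward (Claim~\ref{cl:c3-wit}); this is what permits the bags and zones to share a bounded palette and gives $(32\alpha+3)\,g(\alpha-1)\le 35\alpha\, g(\alpha-1)$. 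Some such global chain-and-zone (palette-reuse) structure is precisely what your local vertex-split iteration is missing and cannot deliver on its own.
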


On another front, one may be interested in the chromatic number of
digraphs with simple local structure. It was conjectured in
\cite{BCC13} (Conjecture 2.6) that there is a function $g$ such that
if $T$ is a tournament in which the set of out-neighbors of each
vertex has chromatic number at most $k$, then $\chi(T)\le g(k)$.  The
conjectured was verified for $k=2$ in \cite{CKLST} and for all $k$ in
\cite{HLNT17+}. Here, we prove a generalization to digraphs with
bounded independence number.

\begin{theorem}\label{thm:global}
There is a function $g$ such that if $D$ is a digraph with
$\alpha(D)\le \alpha$ and that the set of out-neighbors of each vertex
has chromatic number at most $k$, then $ \chi(D)\le g(k,\alpha)$.
\end{theorem}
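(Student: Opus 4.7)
I plan to prove Theorem~\ref{thm:global} by induction on $\alpha$. The base case $\alpha=1$ is exactly the tournament theorem of \cite{HLNT17+}: since $D$ is a tournament with each out-neighborhood of chromatic number at most $k$, we get $\chi(D)\le g_1(k)$ for some function $g_1$, and we set $g(k,1):=g_1(k)$.

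For the inductive step ($\alpha\ge 2$), assume the theorem holds with $\alpha$ replaced by $\alpha-1$, with bound $g(k,\alpha-1)$. Fix a maximum independent set $I=\{v_1,\dots,v_\alpha\}$ of $D$ and associate to each $u\in V(D)\setminus I$ a \emph{signature} $\sigma(u)\in\{+,-,0\}^\alpha$: the $i$-th coordinate is $+$ if $v_i\to u$, $-$ if $u\to v_i$, and $0$ if $u$ and $v_i$ are non-adjacent. Since $I$ is maximum, each $\sigma(u)$ has at least one non-zero coordinate, so $V(D)\setminus I$ is partitioned into at most $3^\alpha-1$ classes $V_\sigma$. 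Three cases arise. If $\sigma_i=+$ for some $i$, then $V_\sigma\subseteq N^+(v_i)$ and so $\chi(D[V_\sigma])\le k$. If $\sigma$ has no $+$ but has some $0$ at positions $Z$, then for any independent $J\subseteq V_\sigma$ the union $J\cup\{v_i:i\in Z\}$ is independent in $D$, giving $\alpha(D[V_\sigma])\le\alpha-|Z|\le\alpha-1$, so the inductive hypothesis yields $\chi(D[V_\sigma])\le g(k,\alpha-1)$. The remaining case is the all-$-$ signature, producing the set $V_-:=\{u:u\to v_i\text{ for all }i\}$.

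The main obstacle is this last case, because the signature decomposition does not reduce the independence number of $D[V_-]$. My plan is to apply Theorem~\ref{thm:superhero-main} to $D[V_-]$: I aim to identify a hero $H=H(k)$ such that $D[V_-]$ is $H$-free, which by Theorem~\ref{thm:superhero-main} bounds $\chi(D[V_-])$ by a function of $H$ and $\alpha$ only. The crucial extra structure is that every $u\in V_-$ satisfies $I\subseteq N^+(u)$, so $N^+(u)$ contains an independent set of size $\alpha$ in addition to having chromatic number at most $k$. I expect $H$ to arise as $\Delta(H',T_1,T_1)$ for a suitable hero $H'$, chosen so that an induced copy of $H$ inside $V_-$ would extend through $I$ to produce, inside some $N^+(u)$, a sub-structure of chromatic number exceeding $k$, contradicting the hypothesis via the tournament theorem of \cite{HLNT17+} applied internally.

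Summing across signatures then gives the inductive bound $\chi(D)\le g(k,\alpha)$. The hardest step, which I expect to be the most technical, is the identification of the hero $H(k)$ and the verification of hero-avoidance inside $V_-$: because every hero has chromatic number at most $2$, hero-avoidance alone cannot yield a $k$-dependent bound from the out-neighborhood hypothesis, and the contribution from the ``external'' independent set $I$ is what must bridge the gap.
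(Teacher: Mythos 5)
Your signature decomposition is fine as far as it goes: the classes with a $+$ coordinate lie inside some $N^+(v_i)$, and the classes with a $0$ coordinate have independence number at most $\alpha-1$. But the entire difficulty is concentrated in $V_-$, and your plan for that class has a genuine gap, in two respects. First, it is circular relative to this paper: Theorem~\ref{thm:superhero-main} is proved in Sections~\ref{sec:HH} and~\ref{sec:H1K} \emph{using} Theorem~\ref{thm:global} as the main tool (Theorem~\ref{thm:local-global-main} and its reversed form are invoked in Lemma~\ref{lem:mount-min} and in the proof of Lemma~\ref{lem:h1k-2chain}), so ``every hero is a superhero'' cannot be assumed when proving Theorem~\ref{thm:global}. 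Second, and more fundamentally, no hero $H(k)$ with the property you want exists once $k\ge 2$. As you observe yourself, every hero has chromatic number at most $2$. Now take any digraph $T$ with $\alpha(T)\le\alpha$ all of whose out-neighborhoods have chromatic number at most $k$, and let $D:=T\Rightarrow I$ where $I$ is an independent set of size $\alpha$ receiving all arcs from $T$. Every out-neighborhood of $D$ is either empty or $N^+_T(u)\cup I$ with all arcs from $N^+_T(u)$ to $I$, hence has chromatic number at most $k$; also $\alpha(D)=\alpha$, $I$ is a maximum independent set, and with this choice of $I$ the class $V_-$ is exactly $V(T)$. So $D[V_-]$ is an arbitrary $k$-local digraph of independence number at most $\alpha$: it can contain an induced copy of every tournament of chromatic number at most $k$, in particular every hero, so no $H$-freeness is available; and the ``external'' set $I$ gives no leverage, since adding $I$ behind all arcs changes neither the chromatic number of $D$ nor of any out-neighborhood. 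In this example, bounding $\chi(D[V_-])$ is literally the original problem with the same $k$ and $\alpha$, so the reduction makes no progress on the hard case, and your own closing remark essentially concedes that the bridging mechanism is missing.

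For comparison, the paper also inducts on $\alpha$ with the same base case, but the inductive step is of a different nature: it proves that the class of $t$-local digraphs with $\alpha(D)\le\alpha$ is \emph{tamed}, i.e.\ large chromatic number forces a subset of bounded size $\ell$ with prescribed chromatic number $k$. This is done by taking a minimum dominating set $B$, a subset $W\subseteq B$, extracting a small high-chromatic set $A\subseteq M^-(W)$ and, for each monochromatic $S\subseteq W$, a small high-chromatic set $A_S\subseteq N^+(S)\cap M^-(A)$ (using minimality of $B$ to force $\chi(N^+(S))$ large), and then producing a monochromatic directed triangle to push the chromatic number up by one. Finally, applying tamedness with target $t+f_{\alpha-1}(t)+1$ yields a bounded set $A$ that must dominate $D$, whence $\chi(D)\le(t+1)|A|$. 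Some argument of this flavor (or another genuinely new idea for $V_-$) is what your proposal is missing.
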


The structure of the paper is as follows. 
 We prove Theorem~\ref{thm:global} in
Section \ref{sec:local-global}, which is the main tool to prove 
Theorem~\ref{thm:superhero-main}. Sections \ref{sec:HH} and \ref{sec:H1K} are
devoted to proving Theorems \ref{lem:HH-main} and \ref{lem:h1k-main}, and
hence complete the proof of Theorem~\ref{thm:superhero-main}.
In Section \ref{sec:triangle-free}, we will prove Theorem~\ref{thm:c3}  
to support Conjecture \ref{conj:poly-C_3}.

\subsection{Notation and remarks}

Given a digraph $D$, we say that $u$ \emph{sees} $v$ and $v$ is
\emph{seen by} $u$ if $uv$ is an arc in $D$.  For every $v\in V(D)$,
we denote by $N_D^+(v)$ (respectively, $N_D^-(v)$) the set of
out-neighbors (respectively, in-neighbors) of $v$ in $D$.  Let $N(v) =
N_D^+(v) \cup N_D^-(v)$.  For every $X\subseteq V(D)$, let
$N^+_D(X)=\bigcup_{v\in X}N^+_D(v)$ (respectively, $N^-_D(X)=\bigcup_{v\in
  X}N^-_D(v)$), the set of vertices seen by (respectively, seeing) at
least one vertex of $X$, and let $M^+_D(X)=\bigcap_{v\in X}N^+_D(v)$
(respectively, $M^-_D(X)=\bigcap_{v\in X}N^-_D(v)$), the set of
vertices seen by (respectively, seeing) all vertices of $X$.  Let $N_D(X)$
denote $N_D^+(X) \cup N_D^-(X)$.  We say that two vertices $u,v$ are
\emph{non-adjacent} if there is no arc with endpoints $\{u,v\}$.  For
every $v\in V(D)$, we denote by $N_D^o(v)$ the set non-adjacent
vertices of $v$ in $D$. For a subset $X$ of $V(D)$, we denote by
$N^o_D(X)$ the set of vertices of $V$ non-adjacent to at least one
vertex of $X$.  When it is clear from context (most of the time), we
omit the subscript $D$ in this notation. We will use throughout the
paper the fact that $V(D)\setminus{X} = M^+_D(X)\cup N^-_D(X)\cup
N^o_D(X)$ for any $X\subseteq V(D)$.

Given a digraph $D$ and a set $X\subseteq V(D)$, we denote by $D[X]$
the subgraph of $D$ induced by $X$. When the context is clear, we
often use $X$ to denote $D[X]$, and say \emph{chromatic number of $X$}
to refer to the chromatic number of $D[X]$.  Given a digraph $D$ we
say that a set $X\subseteq V(D)$ is a \emph{dominating set} of $D$ if
every vertex $v\in V(D)\setminus{X}$ is seen by at least one vertex of
$X$. A subset $Y$ of $V(D)$ is \emph{independent} (or \emph{stable})
if any two distinct vertices in $Y$ are non-adjacent.  Given a digraph
$D$ and two disjoint sets $X,Y\subseteq V(D)$, we denote $X\to_D Y$
(or just $X\to Y$) if there is no arc from $Y$ to $X$ in $D$. A key
observation is that if $X\to Y$, then $\chi(X\cup
Y)=\max\big(\chi(X),\chi(Y)\big)$.

A side remark is that some proofs in this paper that proceed by
induction on $\alpha$ use the fact that if $\alpha(D)\le \alpha$, then
$\alpha(N^o(v))\le \alpha-1$ for every $v$, and thus $ \chi(N^o(v))$
is bounded. In these inductive proofs, we often cite known results on
tournaments for the base case $\alpha=1$. However, our proofs are
indeed self-contained since to prove the base case $\alpha=1$, we just
repeat the same arguments and use the fact that in a tournament,
$N^o(v)=\emptyset$ for any vertex $v$. Hence for example, the proof of
Theorem \ref{lem:h1k-main} can serve as an alternative proof for
Theorem 4.1 in \cite{BCC13} (that if $H$ is a hero, then so are
$\Delta(H,T_k,T_1)$ and $\Delta(H,T_1,T_k)$).


\section{From local to global} \label{sec:local-global}

We start with some observations regarding the size of a dominating set
in an acyclic digraph.

\begin{proposition}\label{prop:dom-acyclic}
An acyclic digraph $D$ has a dominating set which is also a stable
set, and hence has size at most $\alpha(D)$.
\end{proposition}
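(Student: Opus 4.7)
The plan is to exploit the existence of a topological ordering, which we get for free since $D$ is acyclic, and then to build the desired set greedily. Fix a topological ordering $v_1, v_2, \ldots, v_n$ of $V(D)$, meaning that whenever $v_iv_j$ is an arc we have $i < j$. I will process the vertices in this order and, initializing $S = \emptyset$, add $v_i$ to $S$ precisely when $v_i$ has no in-neighbor that is already in $S$.

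Next I would verify that $S$ is stable. Suppose for contradiction that two vertices $v_i, v_j \in S$ with $i < j$ are joined by an arc. The arc $v_jv_i$ is ruled out by the topological ordering, so the arc must be $v_iv_j$; but then at the moment $v_j$ was processed, $v_i$ was already an in-neighbor of $v_j$ lying in $S$, contradicting the rule by which $v_j$ was added. Thus $S$ contains no pair of adjacent vertices and is therefore independent in the sense of the paper.

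Then I would check that $S$ is dominating. If $v_i \notin S$, then by construction $v_i$ had an in-neighbor in $S$ at the time it was processed, and since vertices are only ever added to $S$, that in-neighbor still lies in $S$. Hence every vertex outside $S$ is seen by some vertex of $S$, as required. The size bound $|S| \le \alpha(D)$ is then immediate from the definition of $\alpha(D)$ as the independence number of the underlying graph.

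There is essentially no obstacle here: the only subtle point is making sure the notion of ``dominating'' (every outside vertex has an in-neighbor in $S$, not merely a neighbor) lines up with the direction in which one walks along the topological order, which is why I process vertices from earliest to latest so that the ``witnessing'' in-neighbor of a rejected vertex is certified to have been available at decision time.
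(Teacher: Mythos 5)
Your proof is correct and is essentially the paper's argument in iterative form: the paper inducts on $|D|$ by taking a source vertex $v$ and recursing on $N^o(v)$, which amounts to exactly your greedy selection along a topological order, with the same verification that the resulting set is stable and that every rejected vertex keeps an in-neighbor in the set. Both hinge on acyclicity guaranteeing that adjacencies among chosen/earlier vertices point forward, so no further changes are needed.
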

\begin{proof}
We proceed by induction on $|D|$ to show that every acyclic digraph
$D$ has a dominating set $S$ which is stable. The statement clearly
holds for $|D|=1$. For $|D|>1$, since $D$ is acyclic, there is a
vertex $v$ with no in-neighbors. Then $V(D)\setminus\{v\}=N^+(v)\cup
N^o(v)$.  Applying induction to $D[N^o(v)]$, we obtain a stable
dominating set $S'$ of $D[N^o(v)]$. Clearly $S:=S'\cup\{v\}$ is a
dominating set of $D$.  Note that $S'\subseteq N^o(v)$, and so $S$ is
stable.
\end{proof}

Given $t\ge 1$, a digraph $D$ is {\it $t$-local} if for every vertex
$v$ we have $ \chi (N^+(v))\leq t$. Let $\cal C$ be a class of
digraphs closed under taking subdigraphs.  We say that $\cal C$ is
{\it tamed} if for every integer $k$ there exists $K$ and $\ell$ such
that every digraph $T\in \cal C$ with $ \chi (T)\geq K$ contains a set
$A$ of $\ell$ vertices such that $ \chi (A)\geq k$.  Note that a class
of digraphs with bounded chromatic number is indeed tamed.

The following proposition is straightforward.

\begin{proposition}\label{prop}
Let $D$ be a $t$-local digraph. Then for every subset $X$ of $V(D)$, $
\chi \big(N^+(X)\cup X\big)\leq t|X|$.
\end{proposition}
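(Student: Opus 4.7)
The plan is to combine two straightforward facts: subadditivity of the dichromatic number over unions, and the observation that adding a vertex $v$ to its own out-neighborhood does not increase the chromatic number of that neighborhood.

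First I would establish the key local bound: for every vertex $v$, $\chi(N^+(v) \cup \{v\}) \le t$. Since $D$ is $t$-local, fix a partition of $N^+(v)$ into $t$ acyclic sets $C_1, \ldots, C_t$. I claim that $C_1 \cup \{v\}, C_2, \ldots, C_t$ is still a partition into acyclic sets. Here the paper's simplicity convention is what does the work: because at most one arc has endpoints $\{v,u\}$, the relation $u \in N^+(v)$ precludes the reverse arc $u \to v$. Thus every arc between $v$ and $C_1 \subseteq N^+(v)$ is directed out of $v$; any directed cycle through $v$ in $C_1 \cup \{v\}$ would need an in-arc to $v$ from $C_1$, which does not exist, so $C_1 \cup \{v\}$ is acyclic.

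Next I would apply the standard fact that the dichromatic number is subadditive over unions (which follows from the observation that a union of acyclic colorings, using disjoint palettes, is an acyclic coloring of the union). Writing
\[
N^+(X) \cup X \;=\; \bigcup_{v \in X} \bigl(N^+(v) \cup \{v\}\bigr),
\]
subadditivity together with the local bound yields
\[
\chi\bigl(N^+(X) \cup X\bigr) \;\le\; \sum_{v \in X} \chi\bigl(N^+(v) \cup \{v\}\bigr) \;\le\; t|X|,
\]
which is the desired inequality.

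There is essentially no obstacle here; the only point that requires a moment's attention is verifying that simplicity (no opposite arc between two vertices) legitimately lets us absorb $v$ into one of the color classes of $N^+(v)$ without breaking acyclicity. If one were working in a convention allowing digons, one would need an extra color per $v$, yielding only the weaker bound $(t+1)|X|$; the proposition therefore implicitly relies on the paper's simplicity assumption stated in Section~\ref{sec:intro}.
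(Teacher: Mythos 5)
Your proof is correct and is exactly the straightforward argument the paper intends (the paper omits the proof, calling the proposition straightforward): absorb each $v$ into one color class of $N^+(v)$, which is legitimate since simplicity rules out an arc from $N^+(v)$ back to $v$, and then sum over $v\in X$ by subadditivity of $\chi$ over unions. Nothing further is needed.
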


Let us restate Theorem \ref{thm:global}.
\begin{theorem}\label{thm:local-global-main}
For every pair of positive integers $\alpha$ and $t$, there is a
function $f_\alpha(t)$ such that every $t$-local digraph $D$ with
$\alpha(D)\le \alpha$ has chromatic number at most $f_\alpha(t)$.
\end{theorem}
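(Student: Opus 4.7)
\medskip

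\noindent\textbf{Plan.} The plan is to prove the theorem by induction on $\alpha$, using the tournament case as the base and a peeling argument for the inductive step.

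\emph{Base case.} When $\alpha=1$ the digraph $D$ is a tournament, and the main result of \cite{HLNT17+} provides a function $g$ such that every $t$-local tournament has chromatic number at most $g(t)$. Set $f_1(t):=g(t)$.

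\emph{Inductive step.} Assume the statement holds for $\alpha-1$ with bound $f_{\alpha-1}(t)$, and let $D$ be a $t$-local digraph with $\alpha(D)\le\alpha$. The first observation is that for every vertex $v$, the subdigraph $D[N^o(v)]$ is still $t$-local (taking induced subdigraphs can only shrink out-neighborhoods) and has independence number at most $\alpha-1$, since any independent set in $N^o(v)$ extends to one in $D$ by adjoining $v$. Hence by the inductive hypothesis, $\chi(D[N^o(v)])\le f_{\alpha-1}(t)$.

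I then propose the following iterative peeling. Set $D_0:=D$, and for $i\ge 1$, as long as $D_{i-1}$ is nonempty, pick some vertex $v_i\in V(D_{i-1})$ and let
\[
L_i := \{v_i\}\cup N^+_{D_{i-1}}(v_i)\cup N^o_{D_{i-1}}(v_i), \qquad D_i := D_{i-1}\setminus L_i = N^-_{D_{i-1}}(v_i).
\]
By Proposition~\ref{prop} applied inside $D_{i-1}$ with $X=\{v_i\}$ we get $\chi(\{v_i\}\cup N^+_{D_{i-1}}(v_i))\le t$, and by the observation above applied inside $D_{i-1}$ (which is also $t$-local with independence number at most $\alpha$) we get $\chi(N^o_{D_{i-1}}(v_i))\le f_{\alpha-1}(t)$. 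Each layer therefore satisfies $\chi(L_i)\le t+f_{\alpha-1}(t)$, and if the process terminates at step $K$ then $\chi(D)\le K(t+f_{\alpha-1}(t))$.

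\emph{Main obstacle.} The hard part is bounding the number of iterations $K$ by a function of $\alpha$ and $t$ alone. Note that for $j<i$ we have $v_i\in D_{i-1}\subseteq D_j=N^-_{D_{j-1}}(v_j)$, so $v_i\to v_j$ in $D$; hence $\{v_1,\dots,v_K\}$ induces a \emph{transitive} sub-tournament of $D$, which has chromatic number $1$ and so yields no contradiction from $t$-locality alone. To close the argument I would choose $v_i$ carefully at each step. One option is to pick $v_i$ so that a cleverly chosen sub-tournament of $D$ (not the $v_i$ themselves, but, say, a transversal drawn from the layers $L_i$) ends up with chromatic number exceeding the tournament bound $g(t)$ whenever $K$ is too large, contradicting \cite{HLNT17+}. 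Another is to exploit the fact that each vertex of $D_i$ must sit in the common out-neighborhood $\bigcap_{j\le i}N^+_D(v_j)$ whose chromatic number is bounded by $t$, combined with a Ramsey-style extraction from $\alpha(D)\le\alpha$ to rule out long sequences. Either way, the argument must mix the $t$-locality hypothesis, the inductive bound $f_{\alpha-1}(t)$ from the $N^o$ slices, and the tournament theorem of \cite{HLNT17+}. Once $K\le K_{\max}(\alpha,t)$ is established, we conclude with $f_\alpha(t):=K_{\max}(\alpha,t)\bigl(t+f_{\alpha-1}(t)\bigr)$.
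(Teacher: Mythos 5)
Your base case and your observation that $\chi(N^o(v))\le f_{\alpha-1}(t)$ match the paper, and the layer bound $\chi(L_i)\le t+f_{\alpha-1}(t)$ is correct. But the proposal has a genuine gap exactly where you flag it: bounding the number of peels $K$ is not a technical loose end, it is the entire content of the theorem, and nothing you sketch closes it. As you note, the chosen vertices only form a transitive tournament, so $t$-locality gives no contradiction; and your second suggestion is also directionally wrong: since $D_i=N^-_{D_{i-1}}(v_i)$, the surviving vertices lie in $\bigcap_{j\le i}N^-_D(v_j)$, i.e.\ they \emph{see} all the $v_j$, so they do not sit inside any out-neighbourhood whose chromatic number is controlled by $t$-locality, and no bound of the form $\chi(D_i)\le t$ is available. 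The first suggestion (a transversal sub-tournament violating the bound of \cite{HLNT17+}) is not developed and it is unclear how to force such a transversal to be a tournament at all, since vertices in different layers $L_i$ need not be adjacent. Without a bound on $K$ the conclusion $f_\alpha(t):=K_{\max}(t+f_{\alpha-1}(t))$ is vacuous (e.g.\ naive peeling of a transitive tournament already takes $n$ steps), so the proof does not go through.

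For comparison, the paper's route is structurally different and supplies precisely the missing ingredient. It first proves that the class of $t$-local digraphs with $\alpha(D)\le\alpha$ is \emph{tamed}: for every $k$ there are $K,\ell$ such that any member of chromatic number at least $K$ contains a set of at most $\ell$ vertices with chromatic number at least $k$. This is shown by induction on $k$ using a \emph{minimum} dominating set $B$: since $\chi(D)\le (t+1)|B|$, $B$ is large; minimality of $B$ forces $\chi(N^+(S))\ge s$ for suitable subsets $S$ of a window $W\subseteq B$ (otherwise one could replace $S$ by a dominating set of $N^+(S)$ of size at most $\alpha(s-1)$ plus one extra vertex and get a smaller dominating set); and then sets $A\subseteq M^-(W)$ and $A_S\subseteq N^+(S)\cap M^-(A)$ of chromatic number at least $k$ are combined so that any $k$-coloring of their union with $W$ yields a monochromatic directed triangle, giving chromatic number $k+1$ on a bounded vertex set. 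Finally, applying tamedness with $k=t+f_{\alpha-1}(t)+1$ produces a set $A$ of bounded size $\ell$ which must be dominating (otherwise $A\subseteq N^o(v)\cup N^+(v)$ for some $v$, contradicting $\chi(A)>t+f_{\alpha-1}(t)$), whence $\chi(D)\le(t+1)\ell$. Some idea of this kind -- extracting a small, high-chromatic, necessarily dominating set -- is what your peeling scheme would need and currently lacks.
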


\begin{proof}
We proceed by induction on $\alpha$. The case $\alpha=1$ was proved in
\cite{HLNT17+}.  Suppose that Theorem \ref{thm:local-global-main} is
true for $\alpha-1$ (i.e., $f_{\alpha-1}(t)$ exists for every $t$).

\begin{claim}
For every $t$, the class of $t$-local digraphs with independence
number at most $\alpha$ is tamed.
\end{claim}
\begin{cproof}

We fix some arbitrary $t$ and show the property by induction on $k$.
The claim is trivial for $k=1$.
Assuming now that $(K,\ell)$ exists for $k$, we want to find
$(K',\ell')$ for $k+1$. For this, we set $s:=K+\ell
f_{\alpha-1}(t)+\ell t$ and
$$K':=2 k(\alpha s+1)(t+f_{\alpha-1}(t)+1),$$ and fix $\ell'$ later. 

Let $D$ be a $t$-local digraph with vertex set $V$ such that
$\alpha(D)\le \alpha$ and $ \chi (D)\geq K'$.  Let $B$ be a dominating
set of $D$ of minimum size $b$.  By Proposition \ref{prop}, we have
$$ \chi (D)=\chi (B\cup N^+(B))\leq \chi(B)+\chi(N^+(B))\le (t+1)b.$$
In particular, $b\geq K'/(t+1)\ge 2k(\alpha s+1)$. Consider a subset
$W$ of $B$ of size $k(\alpha s+1)$.  By Proposition \ref{prop}, we
have $ \chi (N^+(W))\leq kt(\alpha s +1)$.  By induction hypothesis on
$\alpha-1$, for every $x\in W$, we have $ \chi(N^o(x))\le
f_{\alpha-1}(t)$ since $N^o(x)$ is $t$-local and $\alpha(N^o(x))\le
\alpha-1$.  Hence, (recalling that $M^-(X):=\bigcap_{x\in X}N^-(x)$
for any $X\subseteq D$)
\begin{align*}
\chi(M^-(W))\ge&\  \chi(D)-\chi(N^+(W))-\chi(N^o(W))-\chi(W)\\
\ge&\  K'- kt(\alpha s+1) -\sum_{x\in W}\chi(N^o(x))-|W| \\
\ge&\  K'- k(\alpha s+1)(t+f_{\alpha-1}(t)+1) \\
\ge &\ K'/2 \\
\ge &\ K.
\end{align*}
In particular, by the tamed property applied to $k$,
one can find a set $A \subset M^-(W)$ such that $A$ has $\ell$ vertices 
and $ \chi (A)\geq k$.

Consider now a subset $S$ of $W$ of size $\alpha s+1$. We 
claim that $ \chi (N^+(S))\geq s$. 
If not, we can cover $N^+(S)$ by at most $s-1$ acyclic sets. 
Since every acyclic set has independence number
at most $\alpha$, it has a dominating set of size at most $\alpha$ by 
Proposition \ref{prop:dom-acyclic}. Hence $N^+(S)$ has a dominating set, 
say $S'$ of size at most $\alpha(s-1)\le |S|-2$.
But this yields a contradiction since the set $(B\setminus S)\cup S'\cup \{x\}$, where $x$
is an arbitrary vertex in $A$, would be a dominating set of $T$ of size less than $|B|$.
Therefore, $ \chi (N^+(S))\geq s$. 

By Proposition \ref{prop} applied 
to $N^+(A)$, we have $ \chi(N^+(A))\le \ell t$. Hence 
\begin{align*}
\chi(N^+(S)\cap M^-(A))\ge&\   \chi (N^+(S))- \chi(N^+(A))-\chi(N^o(A)),\\
\ge &\ s-\ell t-\sum_{x\in A}\chi(N^o(x)),\\
\ge &\ (K+\ell f_{\alpha-1}(t)+\ell t)-\ell t-|A| f_{\alpha-1}(t),\\
= &\ K.
\end{align*}
Thus, by the tamed property applied to $k$, there is a subset $A_S$ of $N^+(S)\cap M^-(A)$ such 
that $|A_S|=\ell$ and $ \chi (A_S)\geq k$.

We now construct our subset of $V$ with chromatic number at least $k+1$. 
For this we consider the set $A'$ consisting of vertices 
$A\cup W$ to which we add the collection of $A_S$, 
for all subsets $S\subseteq W$ of size $\alpha s +1 $. Observe that the number of vertices of $A'$ is at most 
$$\ell':=\ell+k(\alpha s+1)+\ell {k(\alpha s +1) \choose \alpha s +1 }.$$ 

To conclude, it is sufficient to show that 
$ \chi (A') \geq k + 1$. Suppose not, and for contradiction, take a $k$-coloring of $A'$. 
Since
$|W|=k(\alpha s +1)$ there is a monochromatic set $S$ in $W$ of size $\alpha s +1$
(say, colored 1). Recall that
$A_S\subseteq M^-(A)$ and $A\subseteq M^-(W)\subseteq M^-(S)$, so we have all arcs from $A_S$ to $A$ and all arcs from $A$ to $S$, and 
note that since $\chi(A)\ge k$ and $\chi(A_S)\ge k$, both $A$ and $A_S$ have a vertex of each of the $k$ colors. Hence 
there are $u\in A$ and $w\in A_S$ colored 1. Since $A_S\subseteq N^+(S)$,
there is $v\in S$ such that $vw$ is an arc. We then obtain the monochromatic cycle $uvw$ of color 1, a contradiction. 
Thus, $ \chi (A') \geq k+1$, completing the proof of the claim.
\end{cproof}

We now can finish the proof of the theorem.
Since the class of $t$-local digraphs with independence number at most $\alpha$ is tamed, 
by applying tamed property for $k=t+f_{\alpha-1}(t)+1$, we have that there exist $(K,\ell)$ such that every 
$t$-local digraph $D$ with $\alpha(D)\le \alpha$ and  $ \chi(D)\geq K$ contains a set $A$ of 
$\ell$ vertices and $ \chi(A)\geq t+f_{\alpha-1}(t)+1$. We claim that $A$ is a dominating set.
If not, then there is a vertex $v$ such that $A\subseteq N^o(v)\cup N^+(v)$. 
Then $\chi(A)\le \chi( N^o(v))+\chi( N^+(v))\le t+f_{\alpha-1}(t)$, a contradiction. 
Hence, $A$ is a dominating set
of $D$. Thus, $ \chi(D) = \chi (\cup_{x \in A} (N^{+}(x) \cup \{x\})) \leq (t+1)|A|=\ell+\ell t$. 
Consequently, $t$-local digraphs have chromatic number at most $f(t):=\max(K,\ell +\ell t)$.
\end{proof}

By reversing the directions of all arcs, we have the following theorem.
\begin{theorem}\label{thm:local-global-main-in}
For every $t$ and $\alpha$, there is $c$ such that every directed graph $D$ with $\alpha(D)\le \alpha$ and $\chi(N^-(v))\le t$ for every $v\in V(D)$ has chromatic number at most $c$.
\end{theorem}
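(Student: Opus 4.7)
The plan is to derive Theorem~\ref{thm:local-global-main-in} as an immediate corollary of Theorem~\ref{thm:local-global-main} by reversing the orientation of every arc. Given $D$ with $\alpha(D)\le \alpha$ and $\chi(N^-_D(v))\le t$ for every $v$, I would let $D'$ be the digraph on the same vertex set obtained by replacing each arc $uv$ of $D$ with the arc $vu$, and then verify that $D'$ satisfies the hypotheses of Theorem~\ref{thm:local-global-main} while preserving the chromatic number.

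The key observations to record are the following. First, the underlying undirected graph of $D'$ is identical to that of $D$, so $\alpha(D')=\alpha(D)\le\alpha$. Second, for each vertex $v$ we have $N^+_{D'}(v)=N^-_D(v)$, and the induced subdigraph $D'[N^+_{D'}(v)]$ is obtained from $D[N^-_D(v)]$ by reversing every arc. Third, reversing every arc of a digraph preserves the chromatic number, since a subset $X\subseteq V$ induces a directed cycle in $D$ if and only if it induces a directed cycle in $D'$ (reversing the arcs along a cycle produces a cycle traversed in the opposite direction), and so acyclic sets in $D$ are exactly the acyclic sets in $D'$. Combining the second and third observations gives $\chi(D'[N^+_{D'}(v)])=\chi(D[N^-_D(v)])\le t$, which says that $D'$ is $t$-local, and also gives $\chi(D')=\chi(D)$.

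With these invariants in hand, the conclusion is immediate: applying Theorem~\ref{thm:local-global-main} to $D'$ yields $\chi(D')\le f_\alpha(t)$, whence $\chi(D)\le f_\alpha(t)$, so one may take $c:=f_\alpha(t)$. There is no real obstacle in this argument; the only sanity check worth writing out is the invariance of $\chi$ under arc reversal, which is essentially by definition once one observes that the set of directed cycles in $D'$ is in bijection (via reversal) with the set of directed cycles in $D$.
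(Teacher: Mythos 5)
Your argument is correct and is exactly the paper's: the paper states Theorem~\ref{thm:local-global-main-in} as an immediate consequence of Theorem~\ref{thm:local-global-main} obtained by reversing all arcs, which is precisely the reduction you carry out (your write-up simply makes explicit the routine checks that reversal preserves $\alpha$, swaps $N^+$ and $N^-$, and preserves $\chi$).
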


\section{Proof of Theorem~\ref{lem:HH-main}} \label{sec:HH}
Theorem~\ref{lem:HH-main} states that if $H_1$ and $H_2$ are
superheroes, then so is $H_1\Rightarrow H_2$. We will reuse the
notions of $r$-mountains and $(r,s)$-cliques introduced in
\cite{BCC13}.  Let us first give the idea of the proof of
Theorem~\ref{lem:HH-main} for a special case: $(C_3\Rightarrow
C_3)$-free tournaments have bounded chromatic number.  Given a
$(C_3\Rightarrow C_3)$-free tournament $T$, suppose that there is a a
small set $Q$ in $T$ with chromatic number 3. Then for any partition
of $Q$ into $Q_1,Q_2$, at least one part of this partition has
chromatic number at least 2, and so contains a copy of $C_3$.  Let
$Y_{Q_1,Q_2}\subseteq V(D)\setminus{Q}$ be the set of vertices seeing
all vertices of $Q_1$ and seen by all vertices of $Q_2$. Observe that
$Y_{Q_1,Q_2}$ is $C_3$-free, otherwise a copy of $C_3$ in
$Y_{Q_1,Q_2}$ together with a copy of $C_3$ in either $Q_1$ or $Q_2$
forms a copy of $C_3\Rightarrow C_3$.  Note that $V(T)\setminus Q$ is
covered by only $2^{|Q|}$ such sets $Y_{Q_1,Q_2}$, and hence $\chi(T)$
is bounded.

Hence we wish to find such a small set of vertices $Q$ with chromatic number 3. 
To this end, we call an arc $uv$ of $T$ \emph{thick} if $N^-(u)\cap N^+(v)$ 
contains a copy of $C_3$. If $T$ has no thick arcs, then 
intuitively $T$ should have simple structure, and thus, bounded chromatic number. 
Suppose that $T$ contains a (not necessarily oriented) triangle $uvw$  where all of the three arcs are thick. 
Then for each of the 
three thick arcs, we take its thickness-certificate (i.e., a copy of $C_3$) and
together with $u,v,w$ we obtain a set $Q$ of at most $12$ vertices. 
It is straightforward to verify that 
$Q$ has chromatic number at least $3$, and thus, by the argument above
$\chi(T)$ is bounded. If $T$ contains no triangle of thick arcs, then for any vertex $v$,
the set of vertices adjacent to $v$ by a thick arc induces a thick-arc-free tournament, 
which, intuitively, should have bounded chromatic number. 
We then easily bound the chromatic number of the sets of 
non-thick in-neighbors and non-thick out-neighbors of $v$, and hence bound the chromatic number of $T$.

The proof of the general case is in the same vein. 
Intuitively, we search for a small set $Q$ with large chromatic number as described above.
We will capture the notion of the set $Q$ with the definition of an object
called an \emph{$r$-mountain}, and the notion of a triangle of thick arcs with
objects called \emph{$(r,s)$-cliques}.
Given a digraph $D$, the formal definitions (which are borrowed from \cite{BCC13}) 
of \emph{$r$-thick-arc}, \emph{$(r,s)$-clique}, and \emph{$r$-mountain} in
$D$ are defined inductively on $r$ as follows. 
Every vertex of $D$ is a {$1$-mountain}. For every $r,s\ge 1$,

\begin{itemize}
\item An arc $e=uv$ of $D$ is \emph{$r$-thick} if $N^-(u)\cap N^+(v)$ contains an $r$-mountain. An $r$-mountain in $N^-(u)\cap N^+(v)$ is a \emph{certificate} of $r$-thickness of $e$. 
\item An \emph{$(r,s)$-clique} of $D$ 
is a set $S\subseteq V(D)$ such that $|S|=s$, and for every distinct vertices $u,v\in S$, 
either $uv$ or $vu$ is an arc that is $r$-thick.
\item Given an $(r,r+1)$-clique $S$ and a certificate $C_{u,v}$ for every distinct $u,v\in S$, 
then the tournament induced on $S\cup(\bigcup_{u,v\in S}C_{u,v})$ is an \emph{$(r+1)$-mountain} of $D$.
\end{itemize}

Note that if a digraph $D$ contains an $(r,r+1)$-clique, then $D$ contains an $(r+1)$-mountain, which is the $(r,r+1)$-clique together with certificates of all $r$-thick arcs of that $(r,r+1)$-clique. Hence, if $D$ contains no $(r+1)$-mountain, then $D$ contains no $(r,r+1)$-clique.

\begin{lemma}[\cite{BCC13}, Lemma~3.3]\label{lem:mountain}
Every $r$-mountain has chromatic number at least $r$, and has at most $(r!)^2$ vertices.
\end{lemma}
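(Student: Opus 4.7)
The plan is to prove both claims simultaneously by induction on $r$. The base case $r=1$ is immediate: a single vertex is a $1$-mountain, has chromatic number $1$, and satisfies $1 \leq (1!)^2 = 1$. For the inductive step, suppose the statement holds for $r$ and consider an $(r+1)$-mountain $M$ built on an $(r,r+1)$-clique $S$ of size $r+1$ together with a certificate $C_{u,v}$ (an $r$-mountain contained in $N^-(u) \cap N^+(v)$) for each of the $\binom{r+1}{2}$ pairs in $S$.

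For the chromatic lower bound I would argue by contradiction: assume $M$ admits an $r$-coloring. Since $|S| = r+1$, pigeonhole gives two vertices $u,v \in S$ receiving the same color, say with the $r$-thick arc oriented from $u$ to $v$. The certificate $C_{u,v}$ is an $r$-mountain, hence by the inductive hypothesis has chromatic number at least $r$; since only $r$ colors appear in total, $C_{u,v}$ must contain a vertex $w$ of the same color as $u$ and $v$. But then $w \in N^-(u) \cap N^+(v)$ yields arcs $u \to v$, $v \to w$, $w \to u$, a monochromatic directed triangle, contradicting that each color class is acyclic. Thus $\chi(M) \geq r+1$.

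For the size bound, the inductive hypothesis gives $|C_{u,v}| \leq (r!)^2$ for each of the $\binom{r+1}{2}$ certificates, and the clique contributes $r+1$ further vertices, so
\[
|V(M)| \;\leq\; (r+1) + \binom{r+1}{2}(r!)^2 \;\leq\; ((r+1)!)^2 = (r+1)^2 (r!)^2,
\]
which follows from the trivial inequality $(r+1) \leq (r+1)(r!)^2(r+2)/2$ valid for $r \geq 1$. This closes the induction.

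The only conceptually substantive step is the chromatic argument, where one has to exploit the fact that, under an assumed $r$-coloring, the certificate $C_{u,v}$ is forced to realize every single color and therefore intersects the color class of the chosen pair; the rest is bookkeeping. The size estimate is a one-line computation.
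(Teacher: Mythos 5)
Your proof is correct. The paper itself does not prove this lemma --- it is quoted from Berger et al.\ \cite{BCC13} (Lemma~3.3) --- and your induction (pigeonhole on the $(r,r+1)$-clique to find two like-colored vertices $u,v$, then forcing a like-colored vertex $w$ in the certificate $C_{u,v}$ to create a monochromatic directed triangle, plus the routine count $(r+1)+\binom{r+1}{2}(r!)^2\le ((r+1)!)^2$) is essentially the standard argument used there.
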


Fix two superheroes $H_1$ and $H_2$.

\begin{lemma}\label{lem:mount-min}
Fix $\alpha\ge 2$ and $r,s\ge 1$, suppose that there are $b_0,b_1,b_2$ such that
\begin{itemize}
\item Every $(H_1\Rightarrow H_2)$-free digraph $D$ with $\alpha(D)\le
  \alpha-1$ has $\chi(D) \leq b_0$.
\item Every $(H_1\Rightarrow H_2)$-free digraph $D$ with $\alpha(D)\le
  \alpha$ and containing no $(r,s)$-clique has $\chi(D) \leq b_1$.
\item Every $(H_1\Rightarrow H_2)$-free digraph $D$ with $\alpha(D)\le \alpha$ and containing no $r$-mountain has $\chi(D)\leq b_2$.  
\end{itemize}
Then there is $b_3$ such that every $(H_1\Rightarrow H_2)$-free digraph $D$ with $\alpha(D)\le \alpha$ and containing no $(r,s+1)$-clique has $\chi(D)\leq b_3$.
\end{lemma}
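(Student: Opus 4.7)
The plan is to follow the ``small set $Q$'' partition strategy sketched at the beginning of Section~\ref{sec:HH}, using an $r$-mountain as $Q$ and combining the three inductive hypotheses. The key starting observation is that for every vertex $v\in V(D)$, the set $T(v)$ of vertices joined to $v$ by an $r$-thick arc contains no $(r,s)$-clique, since adjoining $v$ would create a forbidden $(r,s+1)$-clique; hence by hypothesis~2, $\chi(T^+(v))<b_1$ and $\chi(T^-(v))<b_1$.

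If $D$ has no $r$-mountain then hypothesis~3 yields $\chi(D)<b_2$ directly. Otherwise fix an $r$-mountain $M$ of size at most $(r!)^2$ (Lemma~\ref{lem:mountain}), and set $Q:=M$. I would partition $V(D)\setminus Q$ into a ``non-adjacent'' part $A=\bigcup_{w\in Q}N^o(w)$---which, by hypothesis~1 applied to each $N^o(w)$ (whose underlying graph has independence number at most $\alpha-1$), satisfies $\chi(A)\le|Q|\,b_0$---together with ``pattern'' parts $Y(Q_1,Q_2)$, one for each ordered bipartition $Q=Q_1\cup Q_2$, where $Y(Q_1,Q_2)$ consists of those vertices $v\notin Q$ with $Q_1\to v$ and $v\to Q_2$. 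These pattern parts cover the fully-$Q$-adjacent vertices in at most $2^{|Q|}$ pieces.

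The heart of the proof is bounding $\chi(Y(Q_1,Q_2))$, and the target claim is that $Y(Q_1,Q_2)$ contains no $(r,s)$-clique (so hypothesis~2 gives $\chi(Y(Q_1,Q_2))<b_1$). Given an alleged $(r,s)$-clique $C\subseteq Y(Q_1,Q_2)$, I would attempt to produce a vertex $v^*\in Q$ that is $r$-thickly adjacent to every vertex of $C$, thereby forming a forbidden $(r,s+1)$-clique $C\cup\{v^*\}$ in $D$. For $v^*\in Q_1$ one has $v^*\to u$ for every $u\in C$, and $r$-thickness of $v^*u$ requires an $r$-mountain inside $N^-(v^*)\cap N^+(u)$; because $C\to Q_2$ gives $Q_2\subseteq N^+(u)$ for every $u\in C$, it would suffice to locate a single $r$-mountain inside $N^-(v^*)\cap Q_2$, which one hopes to read off from the $r$-mountain structure of $M$ itself via pigeonhole on the bipartition $(Q_1,Q_2)$ and the tournament structure of $M$.

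The main obstacle is precisely this certificate-extraction step: a bare $r$-mountain $M$ may fail to supply the required sub-mountain inside $N^-(v^*)\cap Q_2$ for some pathological bipartitions---for example, for a $2$-mountain (directed triangle) and $r=2$, some bipartitions admit no valid $v^*$. To overcome this I plan to take $Q$ slightly larger than $M$, for instance by enlarging $M$ with certificates of $r$-thickness drawn from arcs of a suitably chosen configuration, or by iterating the construction on a higher-level mountain, so that every bipartition $(Q_1,Q_2)$ of the enlarged $Q$ admits at least one valid $v^*\in Q_1\cup Q_2$ (using also the symmetric case $v^*\in Q_2$, which requires an $r$-mountain in $N^+(v^*)\cap Q_1$). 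Should the $(r,s)$-clique-free claim fail outright for some bipartition, a fallback is to argue instead that $Y(Q_1,Q_2)$ is $r$-mountain-free (so hypothesis~3 gives $\chi(Y(Q_1,Q_2))<b_2$) by a symmetric extension argument. In either case we conclude $\chi(D)\le|Q|+|Q|\,b_0+2^{|Q|}\max(b_1,b_2)=:b_3$.
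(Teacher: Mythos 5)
Your proposal is not a complete proof: its central step is exactly the point you flag as an ``obstacle'' and then leave unresolved. The claim you need --- that each pattern class $Y(Q_1,Q_2)$ is $(r,s)$-clique-free (or, in your fallback, $r$-mountain-free) --- is not only unproven but implausible. The hypothesis only forbids $(r,s+1)$-cliques in $D$, so $D$ may contain many $(r,s)$-cliques and many $r$-mountains located far from $Q$; for such a clique $C\subseteq Y(Q_1,Q_2)$ to extend to an $(r,s+1)$-clique you must exhibit, for some $v^*\in Q$, an $r$-mountain inside each set $N^-(v^*)\cap N^+(u)$ (or $N^-(u)\cap N^+(v^*)$) for \emph{every} $u\in C$, and the only leverage adjacency to $Q$ gives you is that these sets contain $N^-(v^*)\cap Q_2$ (resp.\ $Q_1\cap N^+(v^*)$). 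Demanding an entire $r$-mountain inside such an intersection fails for many bipartitions (e.g.\ $Q_2$ small or empty), and no enlargement of $Q$ that you describe is shown to repair this uniformly over all $2^{|Q|}$ bipartitions. The fallback is no better: even if an $r$-mountain in $Y(Q_1,Q_2)$ certifies thickness of arcs going from $Q_2$ to $Q_1$, that produces isolated thick arcs, not the pairwise-thick $(s+1)$-set needed for a contradiction, so hypothesis~3 cannot be applied to $Y(Q_1,Q_2)$ either. Phrases such as ``one hopes to read off'' and ``I plan to take $Q$ slightly larger'' mark genuine missing arguments, not routine details.

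For comparison, the paper proves the lemma by a completely different, local-to-global route: fix a vertex $v$ and split $V(D)\setminus\{v\}$ into the set $N^*$ of neighbors joined to $v$ by an $r$-thick arc, the non-thick in- and out-neighborhoods $N^-$, $N^+$, and $N^o(v)$. Then $N^*$ has no $(r,s)$-clique (your one correct observation, used there on $N^*$ rather than on far-away sets), $N^o(v)$ is handled by hypothesis~1, and a short argument shows that if $\chi(N^-)$ is large enough to contain a copy of $H_1$, then $\chi(N^+)$ is bounded: the common out-neighborhood of that copy inside $N^+$ cannot contain $H_2$, the non-neighborhoods are controlled by hypothesis~1, and the sets $N^-(u)\cap N^+$ are controlled by hypothesis~3, since an $r$-mountain there would make $uv$ thick, contradicting $u\notin N^*$. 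Hence every vertex has either $\chi(N^+(v))$ or $\chi(N^-(v))$ bounded, and Theorems~\ref{thm:local-global-main} and~\ref{thm:local-global-main-in} finish the proof. The $Q$-and-patterns decomposition you borrowed from the introduction is used in the paper only later (Lemma~\ref{lem:HH}), where the freeness of the pattern classes follows from chromatic-number thresholds forcing copies of $H_1$ or $H_2$ --- a mechanism that has no analogue for $(r,s)$-clique-freeness, which is why your adaptation does not close.
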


\begin{proof}
A small remark is that the second hypothesis seems redundant since if $D$ contains no $(r,2)$-clique, 
then $D$ contains no $r$-thick arc, and so contains no $r$-mountain. 
However, the second hypothesis is necessary for the case $s$=1.

First, note that since $H_1$ and $H_2$ are superheroes, there is $b_4$
such that every $H_1$-free (or $H_2$-free) digraph $D$ with
$\alpha(D)\le \alpha$ has $\chi(D)\le b_4$.  We first identify all
$r$-thick arcs of $D$. Fix an arbitrary vertex $v$.  Then
$V(D)\setminus{\{v\}}$ can be partitioned into four sets: $N^*$ the
set of neighbors of $v$ that are connected to $v$ by an $r$-thick arc;
$N^-=N^-(v)\setminus{N^*}$; $N^+=N^+(v)\setminus{N^*}$; and $N^o(v)$.

Note that $N^o(v)$ is  $(H_1\Rightarrow H_2)$-free and $\alpha(N^o(v))\le \alpha-1$, 
and so by the first hypothesis, $\chi(N^o(v))\le b_0$. The crucial
fact is that the digraph induced by the set 
$N^*$ does not contain an $(r,s)$-clique; indeed, an
$(r,s)$-clique together with $v$ would form an 
$(r,s+1)$-clique, a contradiction to the fact that $D$ has no $(r,s+1)$-cliques. 
Hence by the second hypothesis, $\chi(N^*)\le b_1$.

\begin{claim}\label{cl:thin}
There is $b_5$ such that either $\chi(N^-)\le b_5$ or $\chi(N^+)\le b_5$.
\end{claim}
\begin{cproof}
Suppose that $\chi(N^-)> b_4$, then $N^-$ contains a copy of $H_1$, say $\hh_1$. Note that
$$N^+=\big(M^+(\hh_1)\cap N^+\big)\cup \big(N^o(\hh_1)\cap N^+\big)\cup \big(N^-(\hh_1)\cap N^+\big).$$
\begin{itemize}
\item If $\chi(M^+(\hh_1)\cap N^+)> b_4$, then $M^+(\hh_1)\cap N^+$ contains a 
copy of $H_2$, say $\hh_2$, and we have $\hh_1\Rightarrow \hh_2$ forming
a copy of $H_1\Rightarrow H_2$, a contradiction. Hence, $\chi(M^+(\hh_1)\cap N^+)\le b_4$

\item For each $u\in \hh_1$, we have $\alpha(N^o(u)\cap N^+)\le \alpha-1$, so $\chi(N^o(u)\cap N^+)\le b_0$.

\item For each $u\in \hh_1$, if $\chi(N^-(u)\cap N^+)\ge b_2$, then $N^-(u)\cap N^+$ contains a $r$-mountain. 
This means that $uv$ is an $r$-thick arc, contradicting $u\notin N^*$. Hence $\chi(N^-(u)\cap N^+)\le b_2$,
for each $u \in \hh_1$.
\end{itemize}
Thus we have, 
\begin{align*}
\chi(N^+)\le &\ \chi(M^+(\hh_1)\cap N^+) + \sum_{u\in \hh_1}\Big(\chi(N^o(u)\cap N^+)+\chi(N^-(u)\cap N^+)\Big)\\
\le &\ b_4+|H_1|(b_0+b_2).
\end{align*}
Set $b_5:= b_4+|H_1|(b_0+b_2)$. We have just shown that if $\chi(N^-)> b_4$, then $\chi(N^+)\le b_5$. Hence either $\chi(N^-)\le b_4\le b_5$ or $\chi(N^+)\le b_5$. This proves the claim.
\end{cproof}

Note that $N^+(v)\subseteq N^+\cup N^*$, 
and so $\chi(N^+(v))\le \chi(N^+)+\chi(N^*)$. Similarly, 
$\chi(N^-(v))\le \chi(N^-)+\chi(N^*)$. Hence for every $v\in V(D)$, 
either $\chi(N^+(v))\le b_5+b_1$ or $\chi(N^-(v))\le b_5+b_1$. Let $R$ be the 
set of all vertices $v\in V(D)$ with $\chi(N^+(v))\le b_5+b_1$ and $B$ be the set 
of all vertices $v\in V(D)$ with $\chi(N^-(v))\le b_5+b_1$. Note that $R\cup B=V(D)$.

Observe that $R$ is a digraph with $\alpha(R)\le \alpha$, and $\chi(N^+_R(v))\le\chi(N^+_D(v))\le b_5+b_1$ for every $v\in R$. Then applying Theorem \ref{thm:local-global-main} to $R$ with $t=b_5+b_1$, there is $b_6$ such that $\chi(R)\le b_6$. Similarly, by Theorem \ref{thm:local-global-main-in}, there is $b_7$ such that $\chi(B)\le b_7$.
Hence $\chi(D)\le\chi(R)+\chi(B)\le b_6+b_7$. Setting $b_3:= b_6 + b_7$
completes the proof of Lemma \ref{lem:mount-min}.
\end{proof}

Recall that if $D$ contains no $(r+1)$-mountain, then $D$ contains no $(r,r+1)$-clique. 
We are now ready to show that digraphs that do not contain an $r$-mountain have bounded chromatic number.

\begin{lemma}\label{lem:mount}
Let $\alpha\ge 2$, and suppose 
that every $(H_1\Rightarrow H_2)$-free digraph $D$ with $\alpha(D)\le
\alpha-1$ has $\chi(D) \leq b_0$ for some $b_0$.
Then for every $r$, there exists
$g_\alpha(r)$ such that every $(H_1\Rightarrow H_2)$-free digraph 
$D$ with $\alpha(D)\le \alpha$ and not containing an $r$-mountain has
$\chi(D)\leq g_\alpha(r)$.
\end{lemma}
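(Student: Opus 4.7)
The plan is induction on $r$, with the structural work offloaded entirely to Lemma \ref{lem:mount-min}. For the base case $r = 1$, every vertex is by definition a $1$-mountain, so a digraph containing no $1$-mountain is empty and one may take $g_\alpha(1) = 0$.

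For the induction step, assume $g_\alpha(r)$ exists, and let $D$ be an $(H_1 \Rightarrow H_2)$-free digraph with $\alpha(D) \le \alpha$ and no $(r+1)$-mountain. As observed in the text preceding Lemma \ref{lem:mountain}, $D$ then contains no $(r, r+1)$-clique, so it suffices to bound $\chi(D)$ under the weaker hypothesis ``no $(r, r+1)$-clique''. I would produce such a bound by iteratively invoking Lemma \ref{lem:mount-min} to construct numbers $c(1), c(2), \ldots, c(r+1)$, where $c(s)$ is to bound the chromatic number of every $(H_1 \Rightarrow H_2)$-free digraph with independence number at most $\alpha$ and no $(r, s)$-clique. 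The bootstrap is $c(1) := 1$, since every singleton is vacuously an $(r, 1)$-clique, and thus a digraph with no $(r, 1)$-clique is empty. To pass from $c(s)$ to $c(s+1)$, I apply Lemma \ref{lem:mount-min} with $b_0$ taken from the hypothesis of the present lemma, $b_1 := c(s)$, and $b_2 := g_\alpha(r)$ supplied by the outer induction hypothesis (since every digraph without an $r$-mountain has chromatic number at most $g_\alpha(r)$). The lemma's output $b_3$ is a valid choice of $c(s+1)$. After $r$ iterations we obtain $c(r+1)$, and we set $g_\alpha(r+1) := c(r+1)$.

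There is no substantive obstacle here: Lemma \ref{lem:mount-min} performs all of the structural work, and this proof is essentially bookkeeping that feeds the outputs of that lemma back into its own inputs. The only small points warranting a brief check are the implication ``no $(r+1)$-mountain'' $\Rightarrow$ ``no $(r, r+1)$-clique'' and the fact that a digraph with no $(r, 1)$-clique is empty, both of which follow immediately from the inductive definitions stated just before Lemma \ref{lem:mountain}. The resulting bound $g_\alpha(r+1)$ is an $r$-fold iteration of the bound produced by Lemma \ref{lem:mount-min}, so no reasonable control on its growth rate is obtained, but this is harmless for the present purpose.
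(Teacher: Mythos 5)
Your proposal is correct and follows essentially the same route as the paper: an outer induction on $r$ whose step is an inner induction on the clique size $s$, bootstrapped from the empty case of $(r,1)$-cliques and advanced by feeding $b_1$ from the previous step and $b_2 = g_\alpha(r)$ into Lemma \ref{lem:mount-min}, then concluding via the observation that no $(r+1)$-mountain implies no $(r,r+1)$-clique. The only (harmless) deviations are cosmetic, e.g.\ taking $c(1)=1$ where the paper takes $0$.
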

\begin{proof}
We proceed by induction on $r$. 
If $D$ contains no $1$-mountain, then $D$ has no vertices, and we can set $g_{{\alpha}}(r):=0$. 
Now suppose by induction that $g_{{\alpha}}({r})$ exists. We will show that  $g_{{\alpha}}({r}+1)$ exists.
First, we claim the following.

\begin{enumerate}[label=(\Alph*)]
\item \label{en:4.2} For every $s$, there exists 
function $g'_{{\alpha}, {r}}(s)$ such that if $D$ contains
no $({r},s)$-clique, then $\chi(D)\le g'_{{\alpha}, {r}}(s)$.
\end{enumerate}

We prove \ref{en:4.2} by induction on $s$. For $s=1$, if $D$ contains no $({r},1)$-clique, then $D$ has no vertex, 
so $g'_{{\alpha}, {r}}(s)=0$.
Suppose, by induction, that $g'_{{\alpha}, {r}}(s)$ exists. Let $D$ be a digraph not containing a
$({r},s+1)$-clique. Applying Lemma \ref{lem:mount-min} 
with  $b_1=g'_{{\alpha},{r}}(s)$ and $b_2=g_{{\alpha}}({r})$, 
we deduce that $g'_{{\alpha}, {r}}(s+1)$ exists. This proves \ref{en:4.2}.

If $D$ contains no $({r}+1)$-mountain, then $D$ contains 
no $({r},{r}+1)$-clique, implying $\chi(D)\le g'_{{\alpha},{r}}({r}+1)$. 
Set $g_{{\alpha}}({r+1}):=g'_{{\alpha},{r}}({r}+1)$. This completes the proof.
\end{proof}

To prove Theorem \ref{lem:HH-main}, it suffices to prove the following lemma.

\begin{lemma}\label{lem:HH}
For every integer $\alpha\ge 1$, there exists
$f(\alpha)$ such that every $(H_1\Rightarrow H_2)$-free 
digraph $D$ with $\alpha(D)\le \alpha$ has $\chi(D)\le f(\alpha)$.
\end{lemma}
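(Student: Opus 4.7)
The plan is to prove the lemma by induction on $\alpha$, with Lemma~\ref{lem:mount} supplying the heart of the induction step and the hero-property of $H_1\Rightarrow H_2$ closing the loop.

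For the base case $\alpha=1$, the digraph $D$ is a tournament. Since $H_1$ and $H_2$ are superheroes, they are in particular heroes; and by the Berger et al.\ construction scheme quoted in the introduction, the concatenation $H_1\Rightarrow H_2$ is then itself a hero. Thus there is a constant $f(1)$ such that every $(H_1\Rightarrow H_2)$-free tournament has chromatic number at most $f(1)$, which handles this case.

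For the inductive step, assume that $f(\alpha-1)$ exists, and set $b_0:=f(\alpha-1)$. The hypothesis of Lemma~\ref{lem:mount} is then satisfied, so there is a function $g_\alpha$ such that every $(H_1\Rightarrow H_2)$-free digraph with independence number at most $\alpha$ containing no $r$-mountain has chromatic number at most $g_\alpha(r)$. The remaining task is to exhibit a single integer $R$ for which no $(H_1\Rightarrow H_2)$-free digraph $D$ with $\alpha(D)\le\alpha$ can contain an $R$-mountain: then Lemma~\ref{lem:mount} immediately gives $\chi(D)\le g_\alpha(R)=:f(\alpha)$.

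To pick $R$, I would again invoke the fact that $H_1\Rightarrow H_2$ is a hero: let $R$ be an integer such that every tournament with chromatic number at least $R$ contains an induced copy of $H_1\Rightarrow H_2$. By Lemma~\ref{lem:mountain}, an $R$-mountain is an induced sub-tournament of $D$ with chromatic number at least $R$, so it must contain an induced copy of $H_1\Rightarrow H_2$. In particular, no $(H_1\Rightarrow H_2)$-free digraph contains an $R$-mountain, and setting $f(\alpha):=g_\alpha(R)$ completes the induction.

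The only non-routine point is the base case, which is not genuinely a new obstacle since it is already covered by the hero characterization of Berger et al.\ (and, as the authors remark at the end of Section~1.1, one can if desired re-derive the hero statement by applying the very same mountain argument in the tournament setting where $N^o(v)=\emptyset$, so the whole proof is self-contained). Otherwise the argument is a direct packaging of Lemma~\ref{lem:mount} with the hero property of $H_1\Rightarrow H_2$.
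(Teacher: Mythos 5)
Your base case and your use of Lemma~\ref{lem:mount} with $b_0=f(\alpha-1)$ match the paper, but the step where you rule out large mountains altogether is a genuine gap. You assert that an $R$-mountain is an induced sub-tournament of $D$, so that the hero property of $H_1\Rightarrow H_2$ applies to it. In a general digraph this is false: a certificate $C_{u,v}$ of an $r$-thick arc is only required to lie in $N^-(u)\cap N^+(v)$, so its vertices need not be adjacent to the other vertices of the $(r,r+1)$-clique nor to vertices of the other certificates. Hence the vertex set of a mountain can contain non-adjacent pairs, and Lemma~\ref{lem:mountain} only guarantees large chromatic number and bounded size, not tournament structure; the word ``tournament'' in the definition is inherited from the tournament setting of Berger et al. Indeed, the paper's remark that an $(r,r+1)$-clique together with arbitrary certificates always forms an $(r+1)$-mountain (which Lemma~\ref{lem:mount} needs, via ``no $(r+1)$-mountain implies no $(r,r+1)$-clique'') is only valid under this reading; if you instead insisted that mountains induce tournaments, that implication, and with it Lemma~\ref{lem:mount}, would break. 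So you cannot invoke heroness to forbid large mountains, and there is no cheap substitute: ``a small vertex set with huge chromatic number and $\alpha(\cdot)\le\alpha$ must contain $H_1\Rightarrow H_2$'' is essentially the statement being proved, so the argument would be circular.

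This is exactly the case your proposal leaves unhandled and which the paper treats in Claim~\ref{cl:max-mount}: when $D$ does contain a $(c_0+2c_1)$-mountain $Q$, one does not derive a contradiction but uses $Q$ as a small core of large chromatic number. Every vertex outside $Q$ is placed in a set $Y_{Q_0,Q_1,Q_2}$ according to its adjacency pattern to a tripartition of $Q$; since $\chi(Q_0)\le c_0$ by the induction on $\alpha$ (as $Q_0\subseteq N^o(v)$), one of $Q_1,Q_2$ has chromatic number at least $c_1$ and hence hosts a copy of $H_2$ (resp.\ $H_1$), which forces $\chi(Y_{Q_0,Q_1,Q_2})\le c_1$ by the superhero property of $H_1$ and $H_2$; summing over the $3^{|Q|}$ patterns bounds $\chi(D)$. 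Without this (or an equivalent) argument for the ``$D$ contains a large mountain'' case, your induction does not close.
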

\begin{proof}
We proceed by induction on $\alpha$. Since $H_1\Rightarrow H_2$ is a hero (see \cite{BCC13}, Theorem 3.2), 
Lemma \ref{lem:HH} is true for $\alpha =1$. Suppose that Lemma \ref{lem:HH} is true 
for $\alpha-1$ and let $c_0=f(\alpha-1)$. Since both $H_1$ and $H_2$ are superheroes, 
there exists $c_1$ such that if $D$ is any $H_1$-free or $H_2$-free digraph, then $\chi(D) \leq c_1-1$.
Let $D$ be a $(H_1\Rightarrow H_2)$-free digraph with vertex set $V$ and $\alpha(D)\le \alpha$. 
If $D$ does not contain a $(c_0+2c_1)$-mountain, then by applying Lemma \ref{lem:mount} to $D$ 
with $b_0=c_0$, there is $c_2$ such that $\chi(D)\le c_2$. 
Thus, it remains to consider the case that $D$ contains a $(c_0+2c_1)$-mountain. 

\begin{claim}\label{cl:max-mount}
There is $c_3$ such that if $D$ contains a $(c_0+2c_1)$-mountain, then $\chi(D)\le c_3$.
\end{claim}
\begin{cproof}
Let $Q$ be a $(c_0+2c_1)$-mountain of $D$. Then by Lemma \ref{lem:mountain}, 
$|Q|\le ((c_0+2c_1)!)^2$ and $\chi(Q)\ge c_0+2c_1$. For every partition $Q$ into three 
sets $Q_0,Q_1,Q_2$, let $Y_{Q_0,Q_1,Q_2}$ be the set of vertices $v\in V\setminus {Q}$ 
such that $Q_0\subseteq N^o(v), Q_1\subseteq N^+(v)$, and $Q_2\subseteq N^-(v)$. Note that 
for every vertex $v\in V\setminus{Q}$, there always exists a partition of $Q$ into 
some sets $Q_0,Q_1,Q_2$ such that $v$ is non-adjacent with every vertex in $Q_0$, sees every 
vertex in $Q_1$ and is seen by every vertex in $Q_2$.
Hence, $V\setminus{Q}$ can be written as
the union of all possible $Y_{Q_0,Q_1,Q_2}$. There are $3^{|Q|}$ sets $Y_{Q_0,Q_1,Q_2}$.

\begin{enumerate}[label=(\Alph*)]\setcounter{enumi}{1}
\item \label{en:4.1} $\chi(Y_{Q_0,Q_1,Q_2})\le c_1$ for every partition $(Q_0,Q_1,Q_2)$ of $Q$.
\end{enumerate}
Indeed, if $Y_{Q_0,Q_1,Q_2}=\emptyset$, then \ref{en:4.1} clearly
holds. Otherwise, $Q_0\subseteq N^o(v)$ for any $v\in
Y_{Q_0,Q_1,Q_2}$.  Note that the digraph $Q_0$ is $(H_1\Rightarrow
H_2)$-free and $\alpha(Q_0)\le \alpha-1$, and so by induction
hypothesis, $\chi(Q_0)\le c_0.$ This gives $\chi(Q_1\cup Q_2)\ge
\chi(Q)-\chi(Q_0)\geq 2c_1$, implying that either $\chi(Q_1)\ge c_1$
or $\chi(Q_2)\ge c_1$.  If $\chi(Q_1)\ge c_1$, then $Q_1$ contains a
copy of $H_2$, say $\hh_2$. If $\chi(Y_{Q_0,Q_1,Q_2})\ge c_1$, then
$Y_{Q_0,Q_1,Q_2}$ contains a copy of $H_1$, say $\hh_1$.  Then
$\hh_1\Rightarrow \hh_2$ forms a copy of $H_1\Rightarrow H_2$, a
contradiction.  Hence, $\chi(Y_{Q_0,Q_1,Q_2})\le c_1$. A similar
argument establishes the case $\chi(Q_2)\ge c_1$.  This proves
\ref{en:4.1}.

Hence 
\begin{align*}
\chi(D)\le&\ \chi(Q)+\chi(V\setminus{Q})\\
\le&\ |Q|+\chi(\bigcup_{(Q_0,Q_1,Q_2)} Y_{Q_0,Q_1,Q_2})\\
\le&\ |Q|+\sum_{(Q_0,Q_1,Q_2)} {\chi(Y_{Q_0,Q_1,Q_2})}\\
\le&\ ((c_0+2c_1)!)^2+3^{((c_0+2c_1)!)^2}c_1.
\end{align*}
Set $c_3:=((c_0+2c_1)!)^2+3^{((c_0+2b_1)!)^2}c_1$. This completes proof of the claim.
\end{cproof}

Hence $\chi(D)\le \max(c_2,c_3)$. Setting
$f(\alpha):=\max(c_2,c_3)$ completes the proof of
Lemma \ref{lem:HH}, thus proving Theorem \ref{lem:HH-main}.
\end{proof}


\section{Proof of Theorem~\ref{lem:h1k-main}} \label{sec:H1K}

It is proved in \cite{S59} that for each integer $k\ge 1$, 
every tournament with at least $2^{k-1}$ vertices contains a copy of $T_k$. 
Let ${\cal R}(a,b)$ be the Ramsey number of $(a,b)$ (i.e., the smallest $n$
such that any graph on $n$ vertices either contains an independent set of order $a$
or a clique of order $b$).
\begin{proposition}\label{lem:k-main}
For each integer $k\ge 1$, every $T_k$-free digraph $D$ with $\alpha(D)\le \alpha$ has at most ${\cal R}(\alpha+1,2^{k-1})$ vertices.
\end{proposition}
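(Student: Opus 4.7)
The plan is to combine the underlying graph Ramsey bound with Stearns's theorem on transitive subtournaments. Let $n = |V(D)|$ and consider the underlying (undirected) graph $G$ of $D$: that is, $uv$ is an edge of $G$ exactly when $u$ and $v$ are adjacent in $D$ (joined by an arc in one of the two directions). Note that a set $S \subseteq V(D)$ is independent in $G$ if and only if $S$ is independent in $D$ in the sense used in this paper (no arc between any two vertices of $S$), so $\alpha(G) = \alpha(D) \leq \alpha$.

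First, suppose for contradiction that $n \ge {\cal R}(\alpha+1, 2^{k-1})$. By the definition of the Ramsey number, $G$ must contain either an independent set of size $\alpha+1$ or a clique of size $2^{k-1}$. The first option is ruled out immediately, since $\alpha(G) \le \alpha$. So there is a set $K \subseteq V(D)$ of size $2^{k-1}$ whose vertices are pairwise adjacent in $D$; that is, $D[K]$ is a tournament on $2^{k-1}$ vertices.

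Now I would invoke the Stearns result cited just before the proposition: every tournament on at least $2^{k-1}$ vertices contains $T_k$ as a subtournament. Applied to $D[K]$, this yields an induced copy of $T_k$ in $D$, contradicting the assumption that $D$ is $T_k$-free. Hence $n < {\cal R}(\alpha+1, 2^{k-1})$, which in particular gives $|V(D)| \le {\cal R}(\alpha+1, 2^{k-1})$ as claimed.

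There is no real obstacle here; the only subtle point is to be explicit that ``independence in $D$'' coincides with independence in the underlying graph, so that Ramsey's theorem applied to $G$ is genuinely available. Everything else follows by combining the two named results in a single step.
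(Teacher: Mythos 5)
Your proposal is correct and follows essentially the same argument as the paper: apply Ramsey's theorem to the underlying graph, rule out the independent set of size $\alpha+1$ using $\alpha(D)\le\alpha$, and use Stearns's theorem to extract a copy of $T_k$ from the resulting tournament on $2^{k-1}$ vertices. The only difference is presentational (your explicit remark that independence in $D$ coincides with independence in the underlying graph), which the paper leaves implicit.
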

\begin{proof}
Suppose for a contradiction that there is a $T_k$-free digraph $D$ with at least ${\cal R}(\alpha+1,2^{k-1})$ vertices. Then the underlying graph of $D$ contains either an independent set of size $\alpha+1$ or a clique of size $2^{k-1}$. The former case is impossible since $\alpha(D)\le \alpha$. Thus $D$ contains a tournament of size $2^{k-1}$, and hence contains a copy of $T_k$, a contradiction.
\end{proof}

Recall that Theorem \ref{lem:h1k-main} states
that if $H$ is a superhero, then so are $\Delta(H,T_k,T_1)$ 
and $\Delta(H,T_1,T_k)$ for any integer $k\ge 1$.
We will prove that if $H$ is a superhero, then so is $\Delta(H,T_k,T_1)$ for any $k\ge 1$. This is sufficient. Indeed, if $H$ is a superhero,
then so is $H_{rev}$, the digraph obtained from $H$ by reversing all its arcs.
Thus, $\Delta(H_{rev},T_k,T_1)_{rev} = \Delta(H,T_1,T_k)$ is also a superhero.

\begin{theorem}\label{thm:h1k-main2}
For every superhero $H$ and every pair of integers $k,\alpha \ge 1$, there is a number
$f(H,k,\alpha)$ such that every $\Delta(H,T_k,T_1)$-free digraph
$D$ with $\alpha(D)\le\alpha$ has $\chi(D)\le f(H,k,\alpha)$.
\end{theorem}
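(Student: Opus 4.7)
The plan is to proceed by induction on $\alpha$, with the base case $\alpha=1$ being the tournament case, which is handled by the very same argument once one notes that $N^o(v)=\emptyset$ in a tournament. The inductive hypothesis provides $\chi(N^o(v))\le c_0:=f(H,k,\alpha-1)$ for every vertex $v$. Since $H$ is a superhero, every $H$-free digraph with independence number at most $\alpha$ has chromatic number bounded by some $c_H$; and by Proposition~\ref{lem:k-main} every $T_k$-free digraph with $\alpha\le\alpha$ has at most $R:=\mathcal{R}(\alpha+1,2^{k-1})$ vertices, hence chromatic number at most $R$.

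The overall strategy mirrors that of Section~\ref{sec:HH}: establish a dichotomy asserting that, for every vertex $v$ of $D$, at least one of $\chi(N^+(v))$ and $\chi(N^-(v))$ is bounded by a uniform constant $C=C(H,k,\alpha)$. Once the dichotomy is in hand, split $V(D)=\mathcal{R}\cup\mathcal{B}$ with $\mathcal{R}:=\{v:\chi(N^+(v))\le C\}$ and $\mathcal{B}:=V(D)\setminus\mathcal{R}\subseteq\{v:\chi(N^-(v))\le C\}$, then apply Theorem~\ref{thm:local-global-main} to $D[\mathcal{R}]$ (which is $C$-local) and Theorem~\ref{thm:local-global-main-in} to $D[\mathcal{B}]$, giving $\chi(D)\le\chi(\mathcal{R})+\chi(\mathcal{B})$, which is bounded.

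To prove the dichotomy, I plan a secondary induction on $k$. The base case $k=1$ exploits the cyclic symmetry $\Delta(H,T_1,T_1)\cong\Delta(T_1,H,T_1)\cong\Delta(T_1,T_1,H)$: for each $v$ with $\chi(N^+(v))>c_H$ pick a copy $\hat H\subseteq N^+(v)$ of $H$, and observe that (i) $M^+(\hat H)\cap N^-(v)=\emptyset$ (a single vertex $w$ there would satisfy $\hat H\to w\to v$ and $v\to\hat H$, closing $\Delta(H,\{w\},\{v\})\cong\Delta(H,T_1,T_1)$), and (ii) for every $u\in\hat H$, the set $N^+(u)\cap N^-(v)$ is $H$-free (a copy $\hat H'$ there would yield $v\to u\to\hat H'\to v$, which is $\Delta(\{v\},\{u\},\hat H')\cong\Delta(H,T_1,T_1)$ by cyclic symmetry). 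Combined with $\chi(N^o(u))\le c_0$, this yields $\chi(N^-(v))\le|H|(c_H+c_0)$. For the inductive step $k\ge 2$, given $v$ with $\chi(N^-(v))>R$, pick $\hat T_k=\{u_1,\ldots,u_k\}\subseteq N^-(v)$; then $M^-(\hat T_k)\cap N^+(v)$ is $H$-free and contributes $c_H$, while $N^+(v)\setminus M^-(\hat T_k)\subseteq\bigcup_i\big[(N^+(u_i)\cap N^+(v))\cup N^o(u_i)\big]$, with each $N^o$-piece contributing $c_0$.

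The hard part will be bounding $\chi(N^+(u_i)\cap N^+(v))$ for $u_i\in N^-(v)$. The naive attempt---showing this subgraph is $\Delta(H,T_{k-1},T_1)$-free so as to invoke the sub-induction on $k$---does not directly succeed: a hypothetical $\Delta(H,T_{k-1},T_1)\subseteq N^+(u_i)\cap N^+(v)$ can only be completed to $\Delta(H,T_k,T_1)$ in $D$ by enrolling $u_i$ or $v$ as an additional $T_k$-vertex, yet the $H$-part of the embedded $\Delta$ sits inside $N^+(u_i)\cap N^+(v)$, so it is dominated by, rather than dominating, $u_i$ and $v$, giving the wrong arc direction for the outer $H\to T_k$. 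Circumventing this obstacle likely requires either a carefully maximized choice of $\hat T_k$ (for example one maximizing $\chi(M^-(\hat T_k)\cap N^+(v))$), enabling a swap argument that replaces some $u_i$ by a new $T_k$-source drawn from inside $N^+(u_i)\cap N^+(v)$, or an analog of the $r$-mountain and $(r,s)$-clique machinery from Section~\ref{sec:HH} adapted to the $\Delta$-setting, possibly invoking the fact that $H\Rightarrow T_k$ is itself a superhero by Theorem~\ref{lem:HH-main} to further restrict the configurations to exclude.
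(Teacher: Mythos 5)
Your argument hinges entirely on the vertexwise dichotomy ``for every $v$, one of $\chi(N^+(v))$, $\chi(N^-(v))$ is bounded by $C(H,k,\alpha)$'', and this dichotomy is never actually established. The main inductive step is missing by your own admission: you cannot bound $\chi(N^+(u_i)\cap N^+(v))$ for $u_i\in N^-(v)$, and the proposed remedies (a maximized choice of $\hat{T}_k$, or some $\Delta$-analogue of the mountain machinery) are only sketched, not carried out. The base case $k=1$ is also not proved as written: after observing that $M^+(\hat{H})\cap N^-(v)=\emptyset$, the remaining part of $N^-(v)$ decomposes (via $V\setminus X=M^+(X)\cup N^-(X)\cup N^o(X)$) into the pieces $N^-(u)\cap N^-(v)$ and $N^o(u)\cap N^-(v)$ for $u\in\hat{H}$, not into the pieces $N^+(u)\cap N^-(v)$ that your item (ii) controls. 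Concretely, a vertex $w\in M^-(\hat{H})\cap N^-(v)$ (seeing all of $\hat{H}$ and seeing $v$) is covered by none of your three cases, and the resulting configuration $w\to\hat{H}$, $w\to v$, $v\to\hat{H}$ is transitive-like and produces no copy of $\Delta(H,T_1,T_1)$, so no bound follows. There is no evidence that the dichotomy holds for arbitrary $\Delta(H,T_k,T_1)$-free digraphs; even for the simplest case $H=T_1$, $k=1$ (i.e.\ $C_3$-free digraphs) such a local statement is not known, and Section~\ref{sec:triangle-free} deliberately avoids it.

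For comparison, the paper never proves a dichotomy valid for all vertices of $D$. It takes a maximal chain of $c$-bags $B_1,\dots,B_t$ (sets of chromatic number $c$ whose consecutive interactions are controlled), shows by induction along the chain that far-apart bags and zones also interact in a controlled way (Claims \ref{cl:h1k-bag1}--\ref{cl:bag2} and \ref{cl:h1k-zone-wit}--\ref{cl:h1k-zone1}), bounds $\chi(B_{1,t})$ by the layered argument of Claim~\ref{cl:hero4.1}, and reduces the zones to Lemma~\ref{zone:bounded}. The heart of that lemma, Lemma~\ref{lem:h1k-2chain}, treats digraphs with no $d$-bag-chain of length $2$: there one uses balls (copies of $J=H\Rightarrow T_k$, a superhero by Theorem~\ref{lem:HH-main}), colours them red or blue, removes a set $U$ of bounded chromatic number ($U$ is $K$-free for an iterated $\Rightarrow$ of balls), and only for the remaining red and blue vertices obtains the local bounds needed to invoke Theorems \ref{thm:local-global-main} and \ref{thm:local-global-main-in}. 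In other words, the local-to-global step is applied only after a global structural reduction (maximal bag-chains, plus the no-short-chain hypothesis), never directly to all of $D$; supplying that structure is exactly the part your proposal leaves open.
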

 
The idea of the proof of Theorem \ref{thm:h1k-main2} is as
follows. Fix a large number $c$ and call a subset $B$ of $V(D)$ such that
$\chi(B)=c$ a \emph{bag}.  We aim to find a longest chain of disjoint
bags $B_1,\dots,B_t$ in $V(D)$, together with a partition
$V(D)\setminus {\bigcup B_i}$ into sets we call \emph{zones}
$Z_0,\dots,Z_t$ such that there is no \emph{backward arc} in $D$
(where $uv$ is a backward arc if the bag or zone containing $u$ has
higher index than the bag or zone containing $v$). Then, using
maximality of $t$, we show that the chromatic number of every zone is
bounded.  Once proving this, we observe that all $B_i$ and $Z_i$ have
bounded chromatic number and since $D$ has no backward arc, $D$ has
bounded chromatic number.  However, the requirement that there is no
backward arc in $D$ is too strong and so we will have to slightly
relax it.  In doing so, we will need to allow some backward arcs, but
we will want to do so in a very controlled manner. This leads us to
the following definitions.

For an integer $c\ge 1$ and a $\Delta(H,T_k,T_1)$-free digraph $D$
with $\alpha(D)\le \alpha$, a set $B\subseteq V(D)$ such that $\chi(B)=c$
is called a \emph{$c$-bag}.  A family of pairwise disjoint $c$-bags
$B_1,\dots,B_t$ is a \emph{$c$-bag-chain} if for every $i$ and every
$v\in B_i$, we have $\chi(N^+(v)\cap B_{i-1})\le c_1$ and
$\chi(N^-(v)\cap B_{i+1})\le c_1$, where $c_1$ is a fixed number
satisfying
\begin{itemize}
\item $c_1 \ge \mathcal{R}(\alpha+1,2^{k-1})$, and
\item $c_1 \ge \chi(D)$ for every $H$-free digraph $D$ with $\alpha(D)\le \alpha$.
\end{itemize}
Since $H$ is a superhero, such $c_1$ clearly exists. Note that every $T_k$-free digraph $D$ with $\alpha(D)\le \alpha$ has at most $c_1$ vertices 
by Proposition~\ref{lem:k-main}, and so has chromatic number at most $c_1$. 

\begin{proof}[Proof of Theorem~\ref{thm:h1k-main2}]
We proceed by induction on $\alpha$.  Since $\Delta(H,T_k,T_1)$ is a
hero (see \cite{BCC13}, Theorem~4.1), the theorem holds for $\alpha
=1$. Suppose that Theorem~\ref{thm:h1k-main2} is true for $\alpha-1$
and let $c_0=f(H,k,\alpha-1)$. Let $D$ be a $\Delta(H,T_k,T_1)$-free
digraph with vertex set $V$ and $\alpha(D)\le\alpha$.  An important
observation is that for every $v\in V$, the digraph $N^o(v)$ is
$\Delta(H,T_k,T_1)$-free and has independence number at most
$\alpha-1$, and hence
\begin{equation}\label{eq:no}
\chi(N^o(v))\le f(H,k,\alpha-1)=c_0. 
\end{equation}
We also would like to recall some useful formulas. For every $v\in
X\subseteq V$,
\begin{equation}\label{eq:4.set-vertex}
\chi(N^+(X))\le \sum_{v\in X}\chi(N^+(v)) \text{ and } \chi(N^o(X))\le
\sum_{v\in X}\chi(N^o(v)),
\end{equation}
and if $Y\subseteq V$ and $Y\cap X=\emptyset$, then (recalling that
$M^+(X)$ is the set of vertices seen by all vertices of $X$)
\begin{equation}\label{eq:4.set}
Y= (M^+(X)\cap Y)\cup \big((N^-(X)\cup N^o(X))\cap Y\big).
\end{equation}

Let $h = |H|$ and set $c:=2(c_0+c_1)(h+k)$. Let us assume that
$B_1,\dots,B_t$ is a $c$-bag-chain of $D$ with $t$ as large as
possible. In the proof of this theorem, we drop prefix $c$- of $c$-bag
and $c$-bag-chain for convenience. By definition of bag-chain, every
bag has few backward arcs with bags preceding or succeeding it. In
the following claim, we show that in fact every bag has few backward
arcs with any other bag.

\begin{claim}\label{cl:h1k-bag1}
For every $i$ and $v\in B_i$, and for every $r>0$, 
\begin{enumerate}[label=(\alph*)]
\item \label{en:5.1} $\chi(N^+(v)\cap B_{i-r}) \le c_1$,  and
\item \label{en:5.2} $\chi(N^-(v)\cap B_{i+r})\le c_1$.
\end{enumerate}
\end{claim}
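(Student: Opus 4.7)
My plan is to prove both parts by induction on $r$, with the base case $r=1$ being precisely the definition of a $c$-bag-chain. For $r\ge 2$ I would assume both (a) and (b) for all smaller values of $r$ and derive a contradiction with $\Delta(H,T_k,T_1)$-freeness. In each case the goal is to exhibit a directed $3$-cycle $v\to\hh\to\hk\to v$ in $D$, with $\hh$ a copy of $H$, $\hk$ a copy of $T_k$, and $v$ the single $T_1$-vertex; this is an induced $\Delta(H,T_k,T_1)$ up to cyclic rotation.

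For (a), suppose $\chi(N^+(v)\cap B_{i-r})>c_1$. Since $c_1$ bounds the chromatic number of every $H$-free digraph with independence number at most $\alpha$, this set contains a copy $\hh$ of $H$. I would then search for $T_k$ in the intermediate bag $B_{i-r+1}$, adjacent to $B_{i-r}$. Using \eqref{eq:4.set-vertex} and the bag-chain definition applied to each $w\in\hh$, we get $\chi(N^-(\hh)\cap B_{i-r+1})\le hc_1$, and \eqref{eq:no} gives $\chi(N^o(\hh)\cap B_{i-r+1})\le hc_0$. So by \eqref{eq:4.set} and the choice $c=2(c_0+c_1)(h+k)$,
\begin{equation*}
\chi(M^+(\hh)\cap B_{i-r+1})\;\ge\;c-h(c_0+c_1)\;=\;(c_0+c_1)(h+2k).
\end{equation*}
Subtracting $\chi(N^+(v)\cap B_{i-r+1})\le c_1$ (induction hypothesis of (a) at step $r-1$) and $\chi(N^o(v))\le c_0$ yields $\chi(M^+(\hh)\cap N^-(v)\cap B_{i-r+1})\ge (c_0+c_1)(h+2k-1)>c_1\ge{\cal R}(\alpha+1,2^{k-1})$, so Proposition~\ref{lem:k-main} produces a copy $\hk$ of $T_k$ in this set. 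From $\hk\subseteq M^+(\hh)$, $\hk\subseteq N^-(v)$ and $\hh\subseteq N^+(v)$, the subgraph induced on $\hh\cup\hk\cup\{v\}$ is $\Delta(H,T_k,T_1)$, a contradiction.

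For (b), the argument is symmetric with one crucial choice: from $\chi(N^-(v)\cap B_{i+r})>c_1$ I would \emph{first} extract a copy $\hk$ of $T_k$ via Proposition~\ref{lem:k-main}, and then look in the bag $B_{i+r-1}$ (adjacent to $\hk$'s bag $B_{i+r}$) for the $H$-part. The bag-chain definition applied to each $x\in\hk$ bounds $\chi(N^+(x)\cap B_{i+r-1})\le c_1$, giving $\chi(M^-(\hk)\cap B_{i+r-1})\ge (c_0+c_1)(2h+k)$. Subtracting $\chi(N^-(v)\cap B_{i+r-1})\le c_1$ (induction hypothesis of (b) at step $r-1$) and $\chi(N^o(v))\le c_0$ leaves $\chi(M^-(\hk)\cap N^+(v)\cap B_{i+r-1})\ge (c_0+c_1)(2h+k-1)>c_1$, which by Proposition~\ref{lem:k-main} contains a copy $\hh$ of $H$, again completing a $\Delta(H,T_k,T_1)$ on $\hh\cup\hk\cup\{v\}$.

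The point I would check most carefully is the role assignment in (b). If one instead extracts an $H$-copy first from $N^-(v)\cap B_{i+r}$ and looks for $T_k$ in an intermediate bag (the direct analogue of (a)), the resulting $3$-cycle has orientation $v\to\hk\to\hh\to v$, which is $\Delta(H,T_1,T_k)$ rather than the forbidden $\Delta(H,T_k,T_1)$---no contradiction. Extracting $T_k$ first and placing the $H$-search in the bag adjacent to $\hk$'s bag makes the arc directions $H\to T_k$, $T_k\to T_1$, $T_1\to H$ line up, which is the essential bookkeeping.
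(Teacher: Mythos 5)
Your proof is correct and takes essentially the same approach as the paper: induction on $r$ with the bag-chain definition as base case, a decomposition of an intermediate bag via \eqref{eq:4.set} and \eqref{eq:no} to extract the missing part of $\Delta(H,T_k,T_1)$, and the same asymmetric handling of part (b) (extract the $T_k$-copy from the far bag first so the arc directions line up). The only cosmetic differences are that you work in the bag adjacent to the far bag ($B_{i-r+1}$, resp.\ $B_{i+r-1}$) instead of the paper's $B_{i-1}$, resp.\ $B_{i+1}$, which just swaps which induction hypothesis is invoked for which vertices, and one citation slip: the copy of $H$ in (b) comes from the superhero property of $H$ used to define $c_1$, not from Proposition~\ref{lem:k-main}.
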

\begin{cproof}
We proceed by induction on $r$. For $r=1$, both \ref{en:5.1} and
\ref{en:5.2} holds by definition of bag-chain. Suppose that both
statements are true for $r-1$. We now prove \ref{en:5.1} for
$r$. Suppose for a contradiction that there is $v$ in some $B_i$ such
that $\chi(N^+(v)\cap B_{i-r}) > c_1$. Then $N^+(v)\cap B_{i-r}$ has
a copy of $H$, say $\hat{H}$.  Then by applying \eqref{eq:4.set} we
have
$$B_{i-1}=\Big(M^+(\hh)\cap B_{i-1} \Big)\cup \Big(\big(N^-(\hh)\cup N^o(\hh)\big)\cap B_{i-1} \Big),$$
and 
$$B_{i-1}=\Big(N^-(v)\cap B_{i-1} \Big)
\cup \Big(\big(N^+(v)\cup N^o(v)\big)\cap B_{i-1} \Big).$$
Thus (by using the fact that if $A=B\cup C=B'\cup C'$, then $A=(B\cap B')\cup C\cup C'$) we have 
\begin{equation}\label{eq:1}
\begin{split}
B_{i-1}=&\Big(M^+(\hh)\cap N^-(v)\cap B_{i-1} \Big)\cup
\Big(\big(N^-(\hh)\cup N^o(\hh)\big)\cap B_{i-1} \Big) \\ &
\ \ \ \ \cup \Big(\big(N^+(v)\cup N^o(v)\big)\cap B_{i-1} \Big).
\end{split}
\end{equation}

For each $x\in \hat{H}$, by (\ref{eq:no}) we have $\chi(N^o(x)\cap
B_{i-1})\le c_0$, and by induction hypothesis of \ref{en:5.2} applied
to $x$ and $r-1$, we have $\chi(N^-(x)\cap B_{i-1})\le c_1$. We also
have $\chi(N^o(v)\cap B_{i-1})\le c_0$ by (\ref{eq:no}) and
$\chi(N^+(v)\cap B_{i-1})\le c_1$ by definition of a
bag-chain. Combining with (\ref{eq:1}) and \eqref{eq:4.set-vertex} we
have
\begin{align*}
\chi(M^+(\hat{H})\cap N^-(v)\cap B_{i-1})\ge & \ \chi(B_{i-1})-\sum_{x\in \hat{H}\cup\{v\}}\chi(N^o(x)\cap B_{i-1})\\
&- \sum_{x\in \hat{H}}\chi(N^-(x)\cap B_{i-1})-\chi(N^+(v)\cap B_{i-1}),\\
\ge& \ 2 (c_0+c_1)(h+k)-c_0(h+1)-c_1h-c_1,\\
> & \ c_1.
\end{align*}
Then there exists a copy of $T_k$ in $ M^+(\hh)\cap N^-(v)\cap B_{i-1}$, say $\hat{T_k}$.  
Note that by construction, we have all arcs from $\hh$ to $\hk$, from $\hk$ to $v$ and from $v$ to $\hh$.
Then $\Delta(\hat{H}, \hat{T_k},v)$ forms a copy of $\Delta(H,T_k,T_1)$, a contradiction.

The proof of \ref{en:5.2} for $r$ is similar but not symmetric. In order to obtain a copy of $\Delta(H,T_k,T_1)$, we first get a copy of $T_k$ in $B_{i+r}$, and then a copy of $H$ in $B_{i+1}$. 
This proves the claim.
\end{cproof}

We next prove a stronger statement that every bag has few backward
arcs with the union of all other bags preceding or succeeding it.
Let $B_{i,j}=\bigcup_{s=i}^{j}B_{s}$ for any $1\le i\le j\le t$. (If
$i<1$ or $j>t$ or $j<i$, we set $B_{i,j}=\emptyset$.)

\begin{claim}\label{cl:h1k-bag}
For every $i$ and $v\in B_i$, 
\begin{itemize}
\item $\chi(N^+(v)\cap B_{1,i-2})\le c_1$, and 
\item $\chi(N^-(v)\cap B_{i+2,t}))\le c_1$.
\end{itemize}
\end{claim}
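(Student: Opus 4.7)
The plan is to mimic the proof of Claim~\ref{cl:h1k-bag1} almost verbatim. That claim bounded $\chi$ of a neighborhood restricted to a \emph{single} bag $B_{i-r}$; here one needs the same bound $c_1$ with the neighborhood restricted to the \emph{union} $B_{1,i-2}$ (or $B_{i+2,t}$) of many bags. The key observation is that a copy of $H$ (or $T_k$) found inside such a union may spread over different bags, but the estimate supplied by Claim~\ref{cl:h1k-bag1} is uniform in $r\ge 1$, so one can apply it vertex-by-vertex with a different parameter $r_x$ for each vertex of that copy and still recover the same final bound.

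\textbf{First inequality.} I would assume for contradiction that $\chi(N^+(v)\cap B_{1,i-2})>c_1$; by the superhero property of $H$, the set then contains an induced copy $\hh$ of $H$. Each $x\in \hh$ lies in some $B_{j_x}$ with $j_x\le i-2$, so Claim~\ref{cl:h1k-bag1}\ref{en:5.2} applied with $r=i-1-j_x\ge 1$ gives $\chi(N^-(x)\cap B_{i-1})\le c_1$. Combined with $\chi(N^o(x)\cap B_{i-1})\le c_0$ and $\chi(N^o(v)\cap B_{i-1})\le c_0$ from \eqref{eq:no}, and $\chi(N^+(v)\cap B_{i-1})\le c_1$ from the bag-chain definition, the decomposition \eqref{eq:1} in the proof of Claim~\ref{cl:h1k-bag1} gives
\[
\chi\bigl(M^+(\hh)\cap N^-(v)\cap B_{i-1}\bigr)\ \ge\ c-(h+1)c_0-(h+1)c_1\ \ge\ c_1.
\]
By Proposition~\ref{lem:k-main} together with $c_1\ge \mathcal{R}(\alpha+1,2^{k-1})$, this intersection contains an induced copy $\hk$ of $T_k$, and then $\Delta(\hh,\hk,v)$ is an induced copy of $\Delta(H,T_k,T_1)$ in $D$, a contradiction.

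\textbf{Second inequality.} For $\chi(N^-(v)\cap B_{i+2,t})\le c_1$, I would argue with the roles of $H$ and $T_k$ swapped, as in the ``similar but not symmetric'' step of Claim~\ref{cl:h1k-bag1}. Assuming $\chi(N^-(v)\cap B_{i+2,t})>c_1$, Proposition~\ref{lem:k-main} produces an induced copy $\hk$ of $T_k$ inside this set. Each $x\in \hk$ lies in some $B_{j_x}$ with $j_x\ge i+2$, and Claim~\ref{cl:h1k-bag1}\ref{en:5.1} with $r=j_x-(i+1)\ge 1$ gives $\chi(N^+(x)\cap B_{i+1})\le c_1$. The analogous decomposition in $B_{i+1}$ yields
\[
\chi\bigl(M^-(\hk)\cap N^+(v)\cap B_{i+1}\bigr)\ \ge\ c-(k+1)c_0-(k+1)c_1\ \ge\ c_1,
\]
so by the superhero property of $H$ this set contains an induced copy $\hh$ of $H$, and once more $\Delta(\hh,\hk,v)$ is an induced copy of $\Delta(H,T_k,T_1)$, a contradiction.

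\textbf{Main obstacle.} The one point that goes beyond Claim~\ref{cl:h1k-bag1} is that the vertices of $\hh$ (resp.\ $\hk$) may lie in many distinct bags, ruling out a single uniform choice of $r$; this is handled painlessly because the bound $c_1$ in Claim~\ref{cl:h1k-bag1} is independent of $r$, so one can take $r_x$ depending on $x$ and still get the right total. The choice $c=2(c_0+c_1)(h+k)$ was calibrated precisely so that both counts above survive, so no further tuning of constants is needed and the rest of the proof is a verbatim rerun of the estimates in the proof of Claim~\ref{cl:h1k-bag1}.
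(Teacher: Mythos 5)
Your proof is correct and follows essentially the same route as the paper: assume the bound fails, extract a copy of $H$ (resp.\ $T_k$) from the union of far bags, then run the decomposition of $B_{i-1}$ (resp.\ $B_{i+1}$) from Claim~\ref{cl:h1k-bag1}, using that claim vertex-by-vertex (with the appropriate $r_x$ for each vertex, since its bound is uniform in $r$) to produce the missing $T_k$ (resp.\ $H$) and hence a forbidden $\Delta(\hh,\hk,v)$. This is exactly the paper's argument, with the only implicit point there (the varying $r$) made explicit.
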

\begin{cproof}
We repeat the same argument as in the proof of Claim
\ref{cl:h1k-bag1}.  Towards a contradiction, suppose that the first
statement is false: there is $v$ in some $B_i$ such that
$\chi(N^+(v)\cap B_{1,i-2}) > c_1$. Then $N^+(v)\cap B_{1,i-2}$ has a
copy of $H$, say $\hat{H}$. For each $x\in \hat{H}$, we have
$\chi(N^o(x)\cap B_{i-1})\le c_0$ by (\ref{eq:no}), and
$\chi(N^-(x)\cap B_{i-1})\le c_1$ by Claim \ref{cl:h1k-bag1}.  We also
have $\chi(N^o(v)\cap B_{i-1})\le c_0$ and $\chi(N^+(v)\cap
B_{i-1})\le c_1$. Thus by the same computation as in Claim
\ref{cl:h1k-bag1}, we obtain a copy of $T_k$ in $B_{i-1}$ and reach
the contradiction. The proof of the second statement is similar.
\end{cproof}

From Claim \ref{cl:h1k-bag} we have the following immediate corollary.

\begin{claim}\label{cl:bag2}
For every $i$ and $v\in B_i$,
\begin{itemize}
\item $\chi(N^+(v)\cap B_{1,i-1})\le 2c_1$.
\item $\chi(N^-(v)\cap B_{i+1,t})\le 2c_1$. 
\end{itemize}
\end{claim}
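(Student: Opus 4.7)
The plan is to observe that $B_{1,i-1}$ decomposes as the union $B_{1,i-2} \cup B_{i-1}$, and then apply the two preceding claims to each piece separately. Concretely, for the first bullet I would write
\[
N^+(v) \cap B_{1,i-1} \;=\; \bigl(N^+(v) \cap B_{1,i-2}\bigr) \;\cup\; \bigl(N^+(v) \cap B_{i-1}\bigr).
\]
By Claim~\ref{cl:h1k-bag}, the chromatic number of the first piece is at most $c_1$. By the very definition of a $c$-bag-chain applied to the consecutive pair $B_{i-1}, B_i$ (equivalently Claim~\ref{cl:h1k-bag1}\ref{en:5.1} with $r=1$), the chromatic number of the second piece is also at most $c_1$. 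Since the chromatic number of a union of two sets is at most the sum of their chromatic numbers, we obtain $\chi(N^+(v)\cap B_{1,i-1})\le 2c_1$.

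The second bullet follows symmetrically by decomposing $B_{i+1,t}=B_{i+1}\cup B_{i+2,t}$ and applying the bag-chain condition on the pair $B_i, B_{i+1}$ together with the second statement of Claim~\ref{cl:h1k-bag}. There is no genuine obstacle here: the statement is purely an additive combination of two bounds that have just been established, which is precisely why the authors introduce it as an immediate corollary of Claim~\ref{cl:h1k-bag}. The point of isolating it as a separate claim is presumably that the bound $2c_1$ (which is cleanly tied to the behaviour of $v$ with respect to \emph{all} earlier or later bags, not just the adjacent one) is the exact form needed in the subsequent zone-by-zone chromatic analysis of $D$.
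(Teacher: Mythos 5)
Your proof is correct and is exactly the argument the paper intends: the paper states this claim as an immediate corollary of Claim~\ref{cl:h1k-bag}, and the natural (unwritten) justification is precisely your decomposition $B_{1,i-1}=B_{1,i-2}\cup B_{i-1}$ together with the bag-chain condition (Claim~\ref{cl:h1k-bag1} with $r=1$) and subadditivity of $\chi$ over unions. Nothing further is needed.
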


We now show that the union of all bags has bounded chromatic number. 
We note that in the following proof, we will use only two 
hypotheses: Claim \ref{cl:bag2} and that $\chi(B_i)$ is bounded for every $i$. 
We will re-use the arguments in this proof for subsequent claims.

\begin{claim}\label{cl:hero4.1}
$\chi(B_{1,t})\le 8c(c_1+1)$.
\end{claim}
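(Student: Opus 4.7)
The plan is to reduce $\chi(B_{1,t})$ to a per-sheet bound. I would fix a $c$-coloring of each bag, writing $B_i = B_i^1 \sqcup \cdots \sqcup B_i^c$ with each $B_i^j$ acyclic, and set $S_j := \bigcup_{i=1}^{t} B_i^j$. Since the $S_j$ form a vertex-partition of $B_{1,t}$, we have $\chi(B_{1,t}) \leq \sum_{j=1}^{c} \chi(S_j)$, so it suffices to show $\chi(S_j) \leq 8c_1$ for each $j$, after which summing over $j$ yields the announced bound $8cc_1$.

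Within a single sheet $S_j$ each layer $B_i^j$ is acyclic, and Claim~\ref{cl:bag2} restricts to $S_j$: for every $v \in B_i^j$ we have $\chi(N^+(v) \cap B_{1,i-1}^j) \leq \chi(N^+(v) \cap B_{1,i-1}) \leq 2c_1$, and symmetrically $\chi(N^-(v) \cap B_{i+1,t}^j) \leq 2c_1$. Moreover, the subgraph of $S_j$ obtained by keeping only intra-layer arcs and forward inter-layer arcs is acyclic (cycles cannot return to a lower layer without a backward arc), so every directed cycle in $S_j$ must traverse at least one backward inter-layer arc.

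To bound $\chi(S_j) \leq 8c_1$, I would build an acyclic coloring of $S_j$ layer by layer with palette $[8c_1]$, processing $B_1^j, B_2^j, \ldots, B_t^j$ in order and assigning each new vertex $v \in B_i^j$ a color that does not close a monochromatic directed cycle in the partial color class. Any such cycle must leave $v$ through a backward out-neighbor in $N^+(v) \cap B_{1,i-1}^j$, a set of chromatic number at most $2c_1$; provided the coloring of the already processed layers is arranged so that this backward-out neighborhood uses only $O(c_1)$ distinct palette colors, at most $8c_1 - 1$ colors are forbidden for $v$, leaving a valid choice.

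The main obstacle is precisely this palette-efficiency requirement: the bound $\chi(N^+(v) \cap B_{1,i-1}) \leq 2c_1$ is a chromatic bound, not a bound on the number of palette colors actually used, and monochromatic cycles can close through \emph{forward} in-neighbors of $v$ which are not a priori bounded. Managing this simultaneously for every future vertex is what forces a factor of $8$ rather than, say, $2c_1$: the decomposition $8 = 2 \cdot 4$ should arise from combining the direction-symmetric $2c_1$-bounds on $N^+$ and $N^-$ given by Claim~\ref{cl:bag2} with an extra factor of $2$ of slack to account for longer monochromatic paths between forward and backward arcs. Once $\chi(S_j) \leq 8c_1$ is established, summing over $j \in [c]$ gives $\chi(B_{1,t}) \leq 8cc_1$, completing the claim.
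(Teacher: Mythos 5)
There is a genuine gap, and you point at it yourself: the entire claim has been reduced to the per-sheet bound $\chi(S_j)\le 8c_1$, but that bound is never proved, and the greedy layer-by-layer argument you sketch cannot prove it. When you color a vertex $v\in B_i^j$, the fact that $\chi\bigl(N^+(v)\cap B_{1,i-1}\bigr)\le 2c_1$ (Claim~\ref{cl:bag2}) bounds the chromatic number of the backward out-neighborhood, not the number of palette colors your partial coloring has already used on it; that set may meet all $8c_1$ color classes, and a monochromatic directed cycle through $v$ can close along a longer monochromatic path (weaving forward and backward among already-colored layers), not just along a single backward arc from $v$. So no color is guaranteed to be available, and no amount of ``slack'' factors fixes this locally. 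More fundamentally, your proposal uses only Claim~\ref{cl:bag2} plus acyclicity of the layers, whereas the real difficulty of the claim is to control how backarcs of large span can interleave, and that control genuinely requires the ambient hypotheses: $\Delta(H,T_k,T_1)$-freeness, the Ramsey choice $c_1\ge\mathcal{R}(\alpha+1,2^{k-1})$ (so that any $c_1$ vertices contain a copy of $T_k$), and the inductive bound $c_0$ on non-neighborhoods.

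The paper's proof supplies exactly the missing idea. For each $v\in B_i$ it selects $F_v$, the (at most) $c_1-1$ backward in-neighbors of $v$ in $B_{i+1,t}$ whose backarcs have largest span, and proves the key property that for any backarc $uv$ with $u\notin F_v$, the whole stretch of bags between the endpoints has chromatic number at most $4c$: indeed $F_v\cup\{u\}$ has $c_1$ vertices, hence contains a copy of $T_k$, and a copy of $H$ in the intermediate bags seeing $v$ and seen by this $T_k$ would create $\Delta(H,T_k,T_1)$. It then colors the auxiliary undirected graph on $B_{1,t}$ with edges $\{u,v\}$ for $u\in F_v$ or $v\in F_u$ using $c_1$ colors (each vertex has fewer than $c_1$ such neighbors in later bags), and within each stable class it chops the chain into consecutive blocks of chromatic number at most $4c$; the key property guarantees that, inside a class, backarcs never jump over a whole block, so an odd/even block argument gives $8c$ per class and $8cc_1$ overall. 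Without an analogue of the $F_v$ construction and the freeness-based span bound, your sheet decomposition has no mechanism to prevent unboundedly interleaved backarcs inside a single sheet, so the approach as proposed does not go through.
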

\begin{cproof}
An arc $uv$ with $u\in B_j, v\in B_i,$ and $j>i$ is called a
\emph{backarc} with \emph{span} $j-i$.  For every $i$ and every $v\in
B_i$, if $|N^-(v)\cap B_{i+1,t}|< c_1$, let $F_v:=N^-(v)\cap
B_{i+1,t}$.  If $|N^-(v)\cap B_{i+1,t}|\ge c_1$, let $F_v\subseteq
N^-(v)\cap B_{i+1,t}$ consist of $c_1$ vertices whose backarcs to
$v$ have largest possible spans. Let us show that
\begin{enumerate}[label=(\Alph*)]\setcounter{enumi}{0}
\item \label{en:5.3} For every backarc $uv$ with $u\notin F_v$, if $u\in B_j$ and $v\in B_i$ then $\chi(B_{i,j})\le 4c$.
\end{enumerate}
If $j=i+1$, then \ref{en:5.3} clearly holds since $\chi(B_{i,i+1})\le \chi(B_{i})+\chi(B_{i+1})\le 2c$. Thus we may suppose that $j\ge i+2$. 
Since $u\in N^-(v)\cap B_{i+1,t}$ but $u\notin F_v$, it follows from 
the definition of $F_v$ that $|F_v|=c_1$. Hence $|F_v\cup\{u\}|=c_1+1$. Then there is a copy of $T_k$ in $F_v\cup\{u\}$, say $\hk$. 
Note that $\hk\subseteq B_{j,t}$.

We have a formula similar to \eqref{eq:1}:
\begin{align*}
B_{i+1,j-1}=&\Big(M^-(\hk)\cap N^+(v)\cap B_{i+1,j-1}\Big)\cup \Big(\big(N^+(\hk)\cup N^o(\hk)\big)\cap B_{i+1,j-1} \Big)\cup \\
& \ \ \ \ \cup \Big(\big(N^-(v)\cup N^o(v)\big)\cap B_{i+1,j-1} \Big).
\end{align*}
For each $x\in \hk\cup\{v\}$, we have $\chi(N^o(x)\cap B_{i+1,j-1})\le c_0$. Note also that from Claim \ref{cl:bag2}, we have  
$\chi(N^+(x)\cap B_{i+1,j-1})\le 2c_1$ for every $x\in \hk$ and $\chi(N^-(v)\cap B_{i+1,j-1})\le 2c_1$. 
Furthermore, if $\chi(M^-(\hk)\cap N^+(v)\cap B_{i+1,j-1})> c_1$, then $M^-(\hk)\cap N^+(v)\cap B_{i+1,j-1}$ contains a copy of $H$, say $\hh$. Then $\Delta(\hh,\hk,v)$ forms a copy of $\Delta(H,T_k,T_1)$, a contradiction.
Hence 
\begin{align*}
\chi(B_{i+1,j-1})\le&\  \chi\Big(M^-(\hk)\cap N^+(v)\cap B_{i+1,j-1}\Big)+\sum_{x\in \hat{T_k}\cup\{v\}}\chi\Big(N^o(x)\cap B_{i+1,j-1}\Big)\\
&\  +\sum_{x\in \hat{T_k}}\chi\Big( N^+(x)\cap B_{i+1,j-1}\Big)+
\chi\Big(N^-(v)\cap B_{i+1,j-1}\Big)\\
\le &\ c_1+c_0(k+1)+2c_1k+2c_1\\
\le &\ 3(c_1+c_0)(k+1)\le 2c,
\end{align*}
since $c=2(c_0+c_1)(h+k)$.
This gives $\chi(B_{i,j})\le \chi(B_i)+\chi(B_{i+1,j-1})+\chi(B_j)\le 4c$, which proves \ref{en:5.3}.

Now let $G$ be the undirected graph with vertex set $B_{1,t}$ and
$uv\in E(G)$ if $u\in F_v$ or $v\in F_u$.  Then $B_i$ is a stable set
in $G$ for every $i$.  We now color the vertices of $G$ by $c_1+1$
colors as follows.  First, color all $B_t$ properly by color
1. Suppose that we have already colored $B_{i+1},\dots,B_t$. Every
vertex $v$ in $B_i$ is incident (in $G$) with at most $c_1$ vertices
in $B_{i+1,t}$ (those belonging to $F_v$) and independent (in $G$)
with all other vertices in $B_i$, so we can always properly color $v$,
and so properly color $B_i$.  When the process of coloring ends, we
obtain a partition of $B_{1,t}$ into $c_1+1$ sets of colors, say
$X_1,\dots,X_{c_1+1}$, where each $X_s$ is a stable set in $G$. We now
claim that the chromatic number of the graph induced by $X_s$ is
small.

\begin{enumerate}[label=(\Alph*)]\setcounter{enumi}{1}
\item \label{en:5.4} $\chi(X_s)\le 8c$ for every $1\le s\le c_1+1$.
\end{enumerate}

To prove \ref{en:5.4}, we define a sequence of indices $i_1,i_2,\dots$
inductively as follows. Let $i_1=1$, and for every $r\ge 1$, let
$i_{r+1}>i_r$ be the smallest index such that
$\chi(B_{i_{r},i_{r+1}})> 4c$. The sequence ends by $i_\ell$ with
$\chi(B_{i_{\ell},t})\le 4c$ (i.e., there is no $i_{\ell+1}$
satisfying the condition).  Set $A_r:=B_{i_{r},i_{r+1}-1}$ for every
$1\le r\le \ell-1$ and $A_\ell:=B_{i_{\ell},t}$. Then
$B_{1,t}=\bigcup_{r=1}^{\ell}A_r$, and by definition of the sequence,
$\chi(B_{i_{r},i_{r+1}-1})\le 4c$ for every $1\le r\le \ell-1$. In
other words, for every $1\le r \le \ell$,
\begin{equation}\label{en:5.5} 
\chi(A_r)\le 4c.
\end{equation}

Suppose that there is a backarc $uv$ with $u\notin F_v$ and $u\in
A_{r},v\in A_{r'}$, where $r\ge r'+2$.  Suppose that $u\in B_{j}$ and
$v\in B_{j'}$.  Then $j\ge i_{r}$ since $B_{j}\subseteq
A_r=B_{i_{r},i_{r+1}-1}$, and $j'< i_{r'+1}$ since $B_{j'}\subseteq
A_{r'}=B_{i_{r'},i_{r'+1}-1}$, and so $j'< i_{r-1}$ since $r'+1\le
r-1$.  Also note that $\chi(B_{i_{r-1},i_{r}})> 4c$ by definition of
the sequence. Thus we have $$\chi(B_{j',j})\ge
\chi(B_{i_{r-1},i_{r}})\ge \chi(B_{i_{r},i_{r+1}})> 4c,$$ which
contradicts \ref{en:5.3} in that $\chi(B_{j',j})\le 4c$ if $u\notin
F_v$. This shows that for any $r\ge r'+2$, there is no backarc $uv$
with $u\in A_r,v\in A_{r'}$ and $u\notin F_v$.

Now fix an arbitrary $X_s$ and let $X_{s,r}=X_s\cap A_r$ for every
$1\le r\le \ell$.  Observe that if $uv$ is a backarc with $u,v\in
X_s$, then $u\notin F_v$ (otherwise, $u\in F_v$, so $uv\in G$ and so
$X_s$ is not stable in $G$, a contradiction). Hence combining with the
observation in the paragraph above, we have that there is no backarc
from $X_{s,r}$ to $X_{s,r'}$ for any $r,r'$ with $r\ge r'+2$; in other
words, $X_{s,r'}\to X_{s,r}$ for any $r,r'$ with $r\ge r'+2$.  Recall
from \eqref{en:5.5} that $\chi(X_{s,r})\le 4c$. Thus
$\chi\big(\bigcup_{r\ge1}X_{s,2r-1}\big)\le 4c$ and
$\chi\big(\bigcup_{r\ge 1}X_{s,2r}\big)\le 4c$. This gives
$\chi(X_s)=\chi\big(\bigcup_{r\ge 1}X_{s,r}\big)\le 8c$, which proves
\ref{en:5.4}.

Hence $$\chi(B_{1,t})\le \sum_{s=1}^{c_1}\chi(X_s)\le 8c(c_1+1).$$ This completes the proof of Claim \ref{cl:hero4.1}.
\end{cproof}

We now turn our attention to the vertices not in the bags.  We
partition $V\setminus{B_{1,t}}$ into sets $Z_0,\dots,Z_t$ called
\emph{zones} as follows. For every $x\in V\setminus{B_{1,t}}$, if $i$
is the largest index such that $\chi(N^-(x)\cap B_i) > c_1$, then
$x\in Z_i$; otherwise, $x\in Z_0$.  We first show that every $Z_i$ has
few backward arcs with any $B_j$ sufficiently far from it.

\begin{claim}\label{cl:h1k-zone-wit}
For every $i$ and $v\in Z_i$,
\begin{itemize}
\item $\chi(N^-(v)\cap B_{i+r})\le c_1$ for every $r\ge 1$, and
\item $\chi(N^+(v)\cap B_{i-r})\le c_1$ for every $r\ge 2$. 
\end{itemize}
\end{claim}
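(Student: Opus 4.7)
The first bullet is immediate from the zone construction: since $i$ is the \emph{largest} index with $\chi(N^-(v)\cap B_i)\ge c_1$ (or no such index exists if $v\in Z_0$), we get $\chi(N^-(v)\cap B_{i+r})<c_1$ for every $r\ge 1$.

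For the second bullet I would induct on $r\ge 2$, reusing the template of Claim~\ref{cl:h1k-bag1}. Suppose toward a contradiction that $\chi(N^+(v)\cap B_{i-r})>c_1$, and extract a copy $\hat H$ of $H$ inside $N^+(v)\cap B_{i-r}$. Since $v\to\hat H$ already, it suffices to locate a copy $\hat T_k$ of $T_k$ inside $M^+(\hat H)\cap N^-(v)$: then $\Delta(\hat H,\hat T_k,v)$ is an induced $\Delta(H,T_k,T_1)$, contradicting the hypothesis on $D$.

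The search for $\hat T_k$ is carried out in a well-chosen intermediate bag. For the inductive step $r\ge 3$, I use $B_{i-r+1}$: the bag-chain condition gives $\chi(N^-(x)\cap B_{i-r+1})\le c_1$ for each $x\in \hat H\subseteq B_{i-r}$, equation~\eqref{eq:no} gives $\chi(N^o(x)\cap B_{i-r+1})\le c_0$ and $\chi(N^o(v)\cap B_{i-r+1})\le c_0$, and the inductive hypothesis (applied with $r-1\ge 2$) provides $\chi(N^+(v)\cap B_{i-r+1})\le c_1$. Subtracting these from $\chi(B_{i-r+1})=c=2(c_0+c_1)(h+k)$ in a decomposition analogous to~\eqref{eq:1} leaves $\chi(M^+(\hat H)\cap N^-(v)\cap B_{i-r+1})\ge c_1$, and Proposition~\ref{lem:k-main} then produces the desired $\hat T_k$.

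The main obstacle is the base case $r=2$, where no smaller value of $r$ is available to bound $\chi(N^+(v)\cap B_{i-1})$. I plan to bypass this by taking the intermediate bag to be $B_i$ rather than $B_{i-1}$: Claim~\ref{cl:h1k-bag} still provides $\chi(N^-(x)\cap B_i)\le c_1$ for each $x\in\hat H\subseteq B_{i-2}$ (since $i\ge(i-2)+2$), and the zone definition directly supplies the lower bound $\chi(N^-(v)\cap B_i)\ge c_1$. Decomposing $N^-(v)\cap B_i$ along $M^+(\hat H)$ (rather than all of $B_i$) avoids any appeal to $\chi(N^+(v)\cap B_{i-1})$. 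The delicate point is to absorb the $h(c_1+c_0)$ loss arising from subtracting $\bigcup_{x\in\hat H}[(N^-(x)\cup N^o(x))\cap B_i]$ from $N^-(v)\cap B_i$; this will be the main calculation I would have to verify carefully to conclude the base case.
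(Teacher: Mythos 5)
The first bullet and your inductive step for $r\ge 3$ are fine, but the base case $r=2$ — which you correctly single out as the delicate point — does not go through, and it is precisely the heart of the claim. Your plan is to decompose $N^-(v)\cap B_i$ along $M^+(\hat H)$ and absorb the loss $\sum_{x\in\hat H}\big[\chi(N^-(x)\cap B_i)+\chi(N^o(x)\cap B_i)\big]\le h(c_1+c_0)$. But the only lower bound the zone definition provides is $\chi(N^-(v)\cap B_i)\ge c_1$, and nothing more: $v$ is a zone vertex, so unlike a bag $B_j$ (whose chromatic number is $c=2(c_0+c_1)(h+k)$) the set $N^-(v)\cap B_i$ has no guaranteed surplus. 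Since $c_1-h(c_1+c_0)<0$ already for $h=1$, the residual bound is vacuous and no copy of $T_k$ is forced inside $M^+(\hat H)\cap N^-(v)\cap B_i$. You also cannot instead run the subtraction inside $B_{i-1}$ as in Claim~\ref{cl:h1k-bag1}, because that would require a bound on $\chi(N^+(v)\cap B_{i-1})$, which is exactly what the claim refuses to assert (the second bullet starts at $r\ge 2$) and is not available for a zone vertex.

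The paper's proof avoids this with a different idea, and needs no induction on $r$. From $\chi(N^-(v)\cap B_i)\ge c_1$ it extracts just one copy $\hk\subseteq N^-(v)\cap B_i$ of $T_k$, and then works inside the full bag $B_{i-1}$, whose chromatic number $c$ is large. It subtracts from $B_{i-1}$ only the sets $\big(N^-(\hat H)\cup N^o(\hat H)\big)$, $\big(N^+(\hk)\cup N^o(\hk)\big)$ and $N^o(v)$ — all controlled by Claim~\ref{cl:h1k-bag1} and \eqref{eq:no} — while deliberately keeping $\big(N^+(v)\cup N^-(v)\big)\cap B_{i-1}$ inside the residual set $R=M^+(\hat H)\cap M^-(\hk)\cap\big(N^+(v)\cup N^-(v)\big)\cap B_{i-1}$, which then satisfies $\chi(R)\ge 2c_1$. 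Splitting $R=R_1\cup R_2$ with $R_1=R\cap N^+(v)$ and $R_2=R\cap N^-(v)$, one of the two has chromatic number at least $c_1$: if $R_1$ does, a fresh copy $\hat H'\subseteq R_1$ gives $\Delta(\hat H',\hk,v)$; if $R_2$ does, a fresh copy $\hat T_k'\subseteq R_2$ gives $\Delta(\hat H,\hat T_k',v)$ — a contradiction either way, simultaneously for all $r\ge 2$. This two-sided use of the residual set (paying for neither $N^+(v)$ nor $N^-(v)$ in the subtraction) is the missing ingredient in your base case.
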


\begin{cproof}
The former inequality is obvious by the partition criterion. For the
latter, the proof follows the same idea as that of Claim
\ref{cl:h1k-bag1}, but is a bit more involved.  Suppose for a
contradiction that $\chi(N^+(v)\cap B_{i-r}) > c_1$ for some $r\ge
2$. Then there is a copy of $H$ in $N^+(v)\cap B_{i-r}$, say
$\hh$. Since $\chi(N^-(v)\cap B_i)> c_1$ by partition criterion,
there is a copy of $T_k$ in $N^-(v)\cap B_i$, say $\hk$.  Then by
applying \eqref{eq:4.set} we have
$$B_{i-1}=\Big(M^+(\hh)\cap B_{i-1} \Big)\cup \Big(\big(N^-(\hh)\cup N^o(\hh)\big)\cap B_{i-1} \Big),$$
$$B_{i-1}=\Big(M^-(\hk)\cap B_{i-1} \Big)\cup \Big(\big(N^+(\hk)\cup N^o(\hk)\big)\cap B_{i-1} \Big),$$
and
$$B_{i-1}=\Big(\big(N^+(v)\cup N^-(v)\big)\cap B_{i-1} \Big)\cup \Big(N^o(v)\cap B_{i-1} \Big).$$
Let $R=M^+(\hat{H})\cap M^-(\hk)\cap \big(N^+(v)\cup N^-(v)\big)\cap B_{i-1}$. Then we have 
\begin{align*}
B_{i-1}=R\cup \Big(\big(&\ N^-(\hh)\cup N^o(\hh)\big)\cap B_{i-1} \Big)\cup \\
& \cup \Big(\big(N^+(\hk)\cup N^o(\hk)\big)\cap B_{i-1} \Big) \cup \big(N^o(v)\cap B_{i-1} \big).
\end{align*}

For each $x\in \hh\cup \hk\cup\{v\}$, we have $\chi(N^o(x)\cap B_{i-1})\le c_0$. By Claim~\ref{cl:h1k-bag1}, we have $\chi(N^-(x)\cap B_{i-1})\le c_1$ for each $x\in \hh$ and $\chi(N^+(x)\cap B_{i-1})\le c_1$ for each $x\in \hk$. 
Then
\begin{align*}
\chi(R)\ge & \ \chi(B_{i-1})-\sum_{x\in \hat{H}\cup\hk\cup\{v\}}\chi(N^o(x)\cap B_{i-1})\\
&- \sum_{x\in \hat{H}}\chi(N^-(x)\cap B_{i-1})-\sum_{x\in \hk}\chi(N^+(x)\cap B_{i-1})\\
\ge& \ 2(c_0+c_1)(h+k)-c_0(h+k+1)-c_1h-c_1k\\
>& \ 2c_1.
\end{align*}
Let $R_1=R\cap N^+(v)$ and $R_2=R\cap N^-(v)$.
Note that $R=R_1\cup R_2$, and so either $\chi(R_1)> c_1$ or $\chi(R_2)> c_1$. If $\chi(R_1)> c_1$, there is a copy of $H$ in $R_1$, say $\hh'$. Then $\Delta(\hh',\hk,v)$ forms a copy of $\Delta(H,T_k,T_1)$, a contradiction. If $\chi(R_2)> c_1$, there is a copy of $T_k$ in $R_2$, say $\hk'$. Then $\Delta(\hh,\hk',v)$ forms a copy of $\Delta(H,T_k,T_1)$, a contradiction again. This proves the claim. 
\end{cproof}

Claim \ref{cl:h1k-zone-wit} shows that every zone has few
backward arcs with every bag sufficiently far from it. The next claim
shows the converse (i.e., every bag has few backward arcs with every
zone sufficiently far from it).

\begin{claim}\label{cl:h1k-zone-wit2}
For every $i$ and $v\in B_i$,
\begin{itemize}
\item $\chi(N^+(v)\cap Z_{i-r})\le c_1$ for every $r\ge 2$, and
\item $\chi(N^-(v)\cap Z_{i+r})\le c_1$ for every $r\ge 3$.
\end{itemize}
\end{claim}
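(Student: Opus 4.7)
The plan is to prove both bullets by contradiction along a common template: extract either an $H$-copy or a $T_k$-copy directly from the hypothesis, then locate the complementary copy inside the unique intermediate bag between $v$ and the offending zone, and finally observe that the three pieces form a forbidden copy of $\Delta(H,T_k,T_1)$.

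For the first bullet, suppose $\chi(N^+(v)\cap Z_{i-r})\ge c_1$ for some $v\in B_i$ and some $r\ge 2$. Since $c_1$ exceeds the chromatic number of every $H$-free digraph of independence number at most $\alpha$, there is a copy $\hh$ of $H$ in $N^+(v)\cap Z_{i-r}$. I decompose the bag $B_{i-1}$, mirroring \eqref{eq:1}, as
\[
B_{i-1}=\bigl(M^+(\hh)\cap N^-(v)\cap B_{i-1}\bigr)\cup\bigl((N^-(\hh)\cup N^o(\hh))\cap B_{i-1}\bigr)\cup\bigl((N^+(v)\cup N^o(v))\cap B_{i-1}\bigr).
\]
The ``bad'' pieces are uniformly bounded: each $N^o(\cdot)\cap B_{i-1}$ by $c_0$ via \eqref{eq:no}; $N^+(v)\cap B_{i-1}$ by $c_1$ by the bag-chain definition; and, for each $x\in\hh\subseteq Z_{i-r}$, $N^-(x)\cap B_{i-1}$ by $c_1$ via the first bullet of Claim~\ref{cl:h1k-zone-wit}, which applies because $i-1>i-r$ (this is where I use $r\ge 2$). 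Since $c=2(c_0+c_1)(h+k)$, the arithmetic gives $\chi(M^+(\hh)\cap N^-(v)\cap B_{i-1})\ge (c_0+c_1)(h+2k-1)\ge c_1$, so a copy $\hk$ of $T_k$ lives there, and $\hh\to\hk\to v\to\hh$ is a copy of $\Delta(H,T_k,T_1)$, a contradiction.

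The second bullet follows the same recipe with the roles of $H$ and $T_k$ exchanged at the extraction step, and this reversal is precisely where $r\ge 3$ enters. From $\chi(N^-(v)\cap Z_{i+r})\ge c_1$ I extract a copy $\hk$ of $T_k$ (possible since $c_1\ge\mathcal{R}(\alpha+1,2^{k-1})$, by Proposition~\ref{lem:k-main}), and I probe the bag $B_{i+1}$. The only nontrivial bound on the ``bad'' pieces is $\chi(N^+(x)\cap B_{i+1})\le c_1$ for $x\in\hk\subseteq Z_{i+r}$: this is the second bullet of Claim~\ref{cl:h1k-zone-wit}, whose hypothesis forces $i+1\le (i+r)-2$, i.e.\ $r\ge 3$. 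The remaining $c_0$ and $c_1$ bounds go through unchanged, and $c-(k+1)(c_0+c_1)=(c_0+c_1)(2h+k-1)\ge c_1$ yields a copy $\hh$ of $H$ in $M^-(\hk)\cap N^+(v)\cap B_{i+1}$. Again $\hh\to\hk\to v\to\hh$ realizes $\Delta(H,T_k,T_1)$, a contradiction. The main subtlety I foresee is simply keeping the cyclic orientation of the triangle straight: $\Delta(H,T_k,T_1)$ and its reverse $\Delta(H,T_1,T_k)$ are distinct heroes, so one must double-check in each case that $v$ is placed as the $T_1$ vertex with the bag strictly between $v$ and the offending zone in the correct direction, so that the copy produced is the forbidden one and not its mirror.
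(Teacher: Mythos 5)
Your proof is correct and follows essentially the same route as the paper: the paper likewise extracts $\hh$ from $N^+(v)\cap Z_{i-r}$ (respectively $\hk$ from $N^-(v)\cap Z_{i+r}$), bounds the pieces of $B_{i-1}$ (respectively $B_{i+1}$) using \eqref{eq:no}, the bag-chain definition and Claim~\ref{cl:h1k-zone-wit}, and produces $\Delta(\hh,\hk,v)$ by the computation of Claim~\ref{cl:h1k-bag1}. Your explicit tracking of where $r\ge 2$ and $r\ge 3$ are used, and of the orientation of the triangle, matches the paper's intent.
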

\begin{cproof}
We repeat the argument in the proof of Claim \ref{cl:h1k-bag1}. 
Suppose for a contradiction that the first statement is false: there
is $v$ in some $B_i$ and $r\ge 2$ such that $\chi(N^+(v)\cap Z_{i-r}) > c_1$. Then $N^+(v)\cap Z_{i-r}$ has a copy of $H$, say $\hat{H}$. For each $x\in \hat{H}$, we have $\chi(N^o(x)\cap B_{i-1})\le c_0$ by (\ref{eq:no}), and $\chi(N^-(x)\cap B_{i-1})\le c_1$ by the first inequality in Claim \ref{cl:h1k-zone-wit}.
We also have $\chi(N^o(v)\cap B_{i-1})\le c_0$ and $\chi(N^+(v)\cap
B_{i-1})\le c_1$ by definition of a bag-chain. Thus by the same computation as in Claim \ref{cl:h1k-bag1}, we obtain a copy of $T_k$ in $B_{i-1}$ and reach the contradiction. The proof of the second statement is similar.
\end{cproof}

The next claim is a counterpart of Claim \ref{cl:bag2} for zones, and will be used to bound the chromatic number of the union of zones.

\begin{claim}\label{cl:h1k-zone1}
For every $i$ and $v\in Z_i$, 
\begin{itemize}
\item $\chi(N^+(v)\cap \bigcup_{s=0}^{i-3}Z_{s})\le c_1$, and
\item $\chi(N^-(v)\cap \bigcup_{s=i+3}^{t}Z_{s})\le c_1$.
\end{itemize}
\end{claim}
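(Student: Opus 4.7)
The plan is to derive, in each case, a forbidden copy of $\Delta(H, T_k, T_1)$ by placing its three pieces in three distinct locations: the zone set where the assumed high chromatic number lives, the bag $B_i$ (which, since $v \in Z_i$, always contains a copy of $T_k$ in $N^-(v)$ by the definition of $Z_i$ combined with Proposition~\ref{lem:k-main}), and a nearby ``middle'' bag whose chromatic number $c = 2(c_0 + c_1)(h+k)$ is engineered so that the decomposition \eqref{eq:4.set} leaves enough room for the missing piece. The two bullets then diverge in how this missing piece is located. Cases where $\bigcup_{s \le i-3} Z_s$ or $\bigcup_{s \ge i+3} Z_s$ is empty are vacuous, so we may assume $i \ge 3$ for the first bullet and $i \le t-3$ for the second.

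For the first bullet, assume for contradiction that $\chi(N^+(v) \cap \bigcup_{s \le i-3} Z_s) \ge c_1$, and pick a copy $\hh$ of $H$ there via the superhero property. Since $v \in Z_i$ with $i \ge 3$, also pick a copy $\hk$ of $T_k$ inside $N^-(v) \cap B_i$, using $\chi(N^-(v)\cap B_i) \ge c_1 \ge \mathcal{R}(\alpha+1, 2^{k-1})$ and Proposition~\ref{lem:k-main}. Following the template of Claim~\ref{cl:h1k-zone-wit}, I would look inside the middle bag $B_{i-1}$ and consider
\[
R \;:=\; M^+(\hh) \cap M^-(\hk) \cap \bigl(N^+(v) \cup N^-(v)\bigr) \cap B_{i-1}.
\]
By iterating \eqref{eq:4.set}, the complement of $R$ in $B_{i-1}$ is covered by $(N^-(x) \cup N^o(x)) \cap B_{i-1}$ for each $x \in \hh$, by $(N^+(y) \cup N^o(y)) \cap B_{i-1}$ for each $y \in \hk$, and by $N^o(v) \cap B_{i-1}$. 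The key bounds are: for $x \in \hh$ lying in some $Z_{s_x}$ with $s_x \le i-3$, Claim~\ref{cl:h1k-zone-wit} (at distance $r = i-1-s_x \ge 2 \ge 1$) gives $\chi(N^-(x) \cap B_{i-1}) \le c_1$; for $y \in \hk \subseteq B_i$, the bag-chain definition gives $\chi(N^+(y) \cap B_{i-1}) \le c_1$; and every $N^o$-term is $\le c_0$ by \eqref{eq:no}. A short arithmetic check (analogous to the one in Claim~\ref{cl:h1k-zone-wit}) then yields $\chi(R) \ge 2c_1$. Writing $R = R_1 \cup R_2$ with $R_1 \subseteq N^+(v)$ and $R_2 \subseteq N^-(v)$, one of the two has chromatic number $\ge c_1$: $R_1$ supplies a new copy $\hh'$ and $\Delta(\hh', \hk, v)$ is a forbidden $\Delta(H,T_k,T_1)$; $R_2$ supplies a new copy $\hk'$ and $\Delta(\hh, \hk', v)$ is. Either way we reach a contradiction.

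For the second bullet, assume $\chi(N^-(v) \cap \bigcup_{s \ge i+3} Z_s) \ge c_1$. Since $c_1 \ge \mathcal{R}(\alpha+1, 2^{k-1})$, Proposition~\ref{lem:k-main} gives a copy $\hk$ of $T_k$ directly inside this set, with $\hk \to v$ and each $y \in \hk$ in some $Z_{s_y}$ with $s_y \ge i+3$. Now the missing $H$-piece can be located in one shot inside the middle bag $B_{i+1}$: it suffices to show
\[
\chi\bigl(M^-(\hk) \cap N^+(v) \cap B_{i+1}\bigr) \;\ge\; c_1,
\]
for then a copy $\hh$ in this set yields the forbidden $\Delta(\hh, \hk, v)$. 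By \eqref{eq:4.set}, the complement of $M^-(\hk) \cap N^+(v)$ inside $B_{i+1}$ is covered by $(N^+(y) \cup N^o(y)) \cap B_{i+1}$ for each $y \in \hk$ and by $(N^-(v) \cup N^o(v)) \cap B_{i+1}$. Here Claim~\ref{cl:h1k-zone-wit} applied to each $y \in Z_{s_y}$ (at distance $r = s_y - (i+1) \ge 2$) bounds $\chi(N^+(y) \cap B_{i+1}) \le c_1$, while the first bullet of Claim~\ref{cl:h1k-zone-wit} applied to $v \in Z_i$ (at distance $r = 1$) bounds $\chi(N^-(v) \cap B_{i+1}) \le c_1$. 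Combined with $\chi(B_{i+1}) = c$ and the $N^o$-bounds from \eqref{eq:no}, the desired inequality follows.

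The main obstacle is the inherent asymmetry between the two bullets: the definition of $Z_i$ directly controls in-neighborhoods in later bags (yielding $\chi(N^-(v) \cap B_{i+1}) \le c_1$ at distance $1$) but offers no analogous bound on $\chi(N^+(v) \cap B_{i-1})$. This is precisely why the second bullet collapses into the clean one-step argument above, whereas the first cannot place all three pieces of $\Delta(H, T_k, T_1)$ within $B_{i-1}$ and $v$ alone; instead $v$'s contribution must be absorbed solely through $N^o(v)$ by including the $(N^+(v) \cup N^-(v))$ union in the definition of $R$, with the extra chromatic number then harvested through the two-pronged $R_1/R_2$ split as in Claim~\ref{cl:h1k-zone-wit}.
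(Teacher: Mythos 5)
Your proof is correct. For the second bullet it is exactly the paper's argument: take $\hk\subseteq N^-(v)\cap\bigcup_{s\ge i+3}Z_s$, decompose $B_{i+1}$ via \eqref{eq:4.set}, bound the $N^+(y)$-terms ($y\in\hk$, distance $\ge 2$) and $N^-(v)$ (distance $1$) by Claim~\ref{cl:h1k-zone-wit} and the $N^o$-terms by \eqref{eq:no}, and extract $\hh\subseteq M^-(\hk)\cap N^+(v)\cap B_{i+1}$ to form $\Delta(\hh,\hk,v)$. For the first bullet you deviate slightly: the paper works in $B_{i-2}$ and runs the simpler one-sided computation of Claim~\ref{cl:h1k-bag1}, using the second inequality of Claim~\ref{cl:h1k-zone-wit} (valid since $i-2$ is at distance $2$ from $Z_i$) to bound $\chi(N^+(v)\cap B_{i-2})\le c_1$ and then finding $T_k$ in $M^+(\hh)\cap N^-(v)\cap B_{i-2}$; you instead work in $B_{i-1}$, where no bound on $\chi(N^+(v)\cap B_{i-1})$ is available, and compensate by importing an extra $\hk\subseteq N^-(v)\cap B_i$ from the zone-defining condition and running the two-pronged $R=R_1\cup R_2$ split of Claim~\ref{cl:h1k-zone-wit}. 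Both routes use only the already-established ingredients (the bag-chain definition, Claim~\ref{cl:h1k-zone-wit}, \eqref{eq:no}) and the same constant $c=2(c_0+c_1)(h+k)$, so the difference is cosmetic: the paper's choice of $B_{i-2}$ buys a shorter single-case argument, while yours avoids reaching two bags back at the cost of one more case.
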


\begin{cproof}
Suppose for a contradiction that $\chi(N^+(v) \cap
\bigcup_{s=0}^{i-3}Z_{s})> c_1$, then there is a copy of $H$ in
$N^+(v) \cap \bigcup_{s=0}^{i-3}Z_{s}$, say $\hh$.  For each $x\in
\hat{H}$, we have $\chi(N^o(x) \cap B_{i-2})\le c_0$ by (\ref{eq:no}),
and $\chi(N^-(x) \cap B_{i-2})\le c_1$ by the first inequality in Claim
\ref{cl:h1k-zone-wit}.  We also have $\chi(N^o(v) \cap B_{i-2})\le c_0$
and $\chi(N^+(v) \cap B_{i-2})\le c_1$ by the second inequality in
Claim \ref{cl:h1k-zone-wit}.  Thus by the same computation as in Claim
\ref{cl:h1k-bag1}, we obtain that $ \chi (M^{+}(\hat{H}) \cap N^{-}(v)
\cap B_{i-2}) > c_1$ implying that there is a copy of $T_k$ in
$M^{+}(\hat{H}) \cap N^{-}(v) \cap B_{i-2}$, reaching a contradiction.
The proof of the second statement is similar, where we use $B_{i+1}$
instead of $B_{i-2}$.
\end{cproof}

In order to bound the chromatic number of the union of zones, we also need that each zone has bounded chromatic number. We will prove this by employing the assumption that the bag-chain $B_1,\dots,B_t$ 
is of maximal length.
\begin{claim} \label{cl: 6-chain}
No zone $Z_i$ contains a bag-chain of length 6.
 
\end{claim}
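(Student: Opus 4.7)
The plan is to contradict the maximality of $t$ by exhibiting a $c$-bag-chain of $D$ strictly longer than $B_1, \ldots, B_t$. Suppose $Z_i$ contains a bag-chain $C_1, C_2, C_3, C_4, C_5, C_6$. I would splice these six bags into the original chain in place of the bags surrounding $B_i$, obtaining the candidate sequence
\[
B_1, \ldots, B_{i-3},\ C_1, C_2, C_3, C_4, C_5, C_6,\ B_{i+2}, \ldots, B_t,
\]
where the initial (resp.\ terminal) block is empty when $i \le 2$ (resp.\ $i \ge t-1$). Since the $C_j$'s lie in $Z_i \subseteq V \setminus B_{1,t}$, all bags are pairwise disjoint, and as we delete at most the four original bags $B_{i-2}, B_{i-1}, B_i, B_{i+1}$ while inserting six, the new sequence has length at least $t+2 > t$. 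Hence it suffices to verify that it is a $c$-bag-chain.

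All chain conditions among the $C_j$'s are given by assumption, and those among the surviving $B_j$'s are inherited from the original chain, so only the two new seams $B_{i-3} \to C_1$ and $C_6 \to B_{i+2}$ remain to be checked. This verification is the main technical step and is exactly where Claims~\ref{cl:h1k-zone-wit} and \ref{cl:h1k-zone-wit2} are used, at the distances allowed by their asymmetric hypotheses. For the left seam, Claim~\ref{cl:h1k-zone-wit} with $r=3$ bounds $\chi(N^+(v) \cap B_{i-3}) \le c_1$ for every $v \in C_1 \subseteq Z_i$, and Claim~\ref{cl:h1k-zone-wit2} with $r=3$ bounds $\chi(N^-(w) \cap C_1) \le \chi(N^-(w) \cap Z_i) \le c_1$ for every $w \in B_{i-3}$. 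Symmetrically, for the right seam, Claim~\ref{cl:h1k-zone-wit} with $r=2$ gives $\chi(N^-(v) \cap B_{i+2}) \le c_1$ for $v \in C_6 \subseteq Z_i$, and Claim~\ref{cl:h1k-zone-wit2} with $r=2$ gives $\chi(N^+(w) \cap C_6) \le \chi(N^+(w) \cap Z_i) \le c_1$ for $w \in B_{i+2}$. The asymmetric distances (three on the left, two on the right) mirror the asymmetric conclusions of these two claims and are precisely what forces the excision of exactly the four bags $B_{i-2}, \ldots, B_{i+1}$ around $B_i$. In the boundary cases where $i$ is close to $1$ or $t$, the missing initial or terminal block simply eliminates the corresponding seam obligation while only lengthening the spliced chain, so the argument carries through without change.
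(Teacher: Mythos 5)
Your proof is correct and follows essentially the same route as the paper: splice the six bags found in $Z_i$ into the maximal chain and verify the two new seams using Claims~\ref{cl:h1k-zone-wit} and~\ref{cl:h1k-zone-wit2} at distances permitted by their asymmetric hypotheses. The only (harmless) difference is that the paper reattaches symmetrically at $B_{i-3}$ and $B_{i+3}$, yielding a chain of length $t+1$, while you keep $B_{i+2}$ and obtain length $t+2$; both splicings respect the ranges of $r$ in those claims, so maximality of $t$ is contradicted either way.
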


\begin{cproof}

Suppose that some $Z_i$ contains a bag-chain of length 6, say $Y_1,\dots,Y_6$. Note that we have
\begin{itemize}
\item $\chi(N^+(v)\cap B_{i-3})\le c_1$ for every $v\in Y_1$ by Claim~\ref{cl:h1k-zone-wit} and $\chi(N^-(v)\cap Y_{1})\le c_1$ for every $v\in B_{i-3}$ by Claim~\ref{cl:h1k-zone-wit2}; and
\item $\chi(N^-(v)\cap B_{i+3})\le c_1$ for every $v\in Y_6$ by Claim~\ref{cl:h1k-zone-wit} and $\chi(N^+(v)\cap Y_6)\le c_1$ for every $v\in B_{i+3}$ by Claim~\ref{cl:h1k-zone-wit2}.
\end{itemize}
Thus by definition of a bag-chain,
$B_1,\dots,B_{i-3},Y_1,Y_2,\dots,Y_6,B_{i+3},\dots,B_t$ is a bag-chain
of length $t+1$, which contradicts the maximality of $t$. Hence no
$Z_i$ contains a bag-chain of length 6.
\end{cproof}

\begin{lemma} \label{zone:bounded}
There is $c'$ such that if a $\Delta(H,T_k,T_1)$-free digraph $D'$ with $\alpha(D')\le \alpha$ does not contain any $c$-bag-chain of length $6$, then $\chi(D')\le c'$. 
\end{lemma}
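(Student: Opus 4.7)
The plan is to proceed by induction on $t'$, the length of a maximum $c$-bag-chain in $D'$; the lemma's hypothesis forces $t' \le 5$, so the induction runs over only finitely many values. For the base case $t' = 0$, the digraph $D'$ contains no $c$-bag at all, so every subdigraph of $D'$ (in particular $D'$ itself) has chromatic number less than $c$, and one may set $c'(0) := c - 1$.

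For the inductive step, assume the statement holds with bound $c'(t' - 1)$ for digraphs whose maximum $c$-bag-chain has length strictly less than $t'$, and let $D'$ have such maximum length exactly $t' \in \{1, \ldots, 5\}$. I fix a maximum $c$-bag-chain $Y_1, \ldots, Y_{t'}$ in $D'$. The proofs of Claims~\ref{cl:h1k-bag1}, \ref{cl:h1k-bag}, \ref{cl:bag2}, and \ref{cl:hero4.1} use only the maximality of the chosen chain, the $\Delta(H,T_k,T_1)$-freeness of the ambient digraph, and the independence-number bound $\alpha(D') \le \alpha$ --- all of which hold for $D'$ --- so those proofs apply verbatim to $D'$ and give $\chi(Y_1 \cup \cdots \cup Y_{t'}) \le 8cc_1$. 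Partition $V(D') \setminus (Y_1 \cup \cdots \cup Y_{t'})$ into zones $Z'_0, \ldots, Z'_{t'}$ exactly as in the main proof. Provided the maximum $c$-bag-chain in each $Z'_i$ has length strictly less than $t'$, the inductive hypothesis yields $\chi(Z'_i) \le c'(t' - 1)$, and combining yields
\[
\chi(D') \le \chi(Y_1 \cup \cdots \cup Y_{t'}) + \sum_{i=0}^{t'} \chi(Z'_i) \le 8cc_1 + 6\,c'(t' - 1),
\]
closing the induction with the recursion $c'(t') := 8cc_1 + 6\,c'(t' - 1)$.

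To show that every zone has maximum $c$-bag-chain length strictly less than $t'$, I reuse the splicing argument from Claim~\ref{cl: 6-chain}: if $Z'_i$ contained a $c$-bag-chain $W_1, \ldots, W_{t'}$, then the sequence $Y_1, \ldots, Y_{i-3}, W_1, \ldots, W_{t'}, Y_{i+3}, \ldots, Y_{t'}$ (with prefix or suffix truncated when the indices fall outside $[1, t']$) would itself be a $c$-bag-chain of $D'$, since the interface conditions at $(Y_{i-3}, W_1)$ and $(W_{t'}, Y_{i+3})$ are precisely what Claims~\ref{cl:h1k-zone-wit} and \ref{cl:h1k-zone-wit2} supply with $r = 3$. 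A direct length computation shows that the splice exceeds $t'$ for every non-central position of $i$, contradicting the maximality of the chosen chain. The main obstacle will be the truly central case (most dramatically $t' = 5$, $i = 3$), where both prefix and suffix vanish after truncation and the naive splice has length only $t'$: no contradiction appears. To resolve this I would either strengthen the relevant instance of Claim~\ref{cl:h1k-zone-wit2} to cover the small gaps $r \in \{1, 2\}$ by a dedicated argument internal to $D'$, or replace the plain maximality of $Y_1, \ldots, Y_{t'}$ by a secondary extremal condition (for instance, maximizing $|Y_1| + \cdots + |Y_{t'}|$ among all maximum $c$-bag-chains) so that the central-zone splice, though of the same length $t'$, still violates the stronger extremality and yields the required contradiction.
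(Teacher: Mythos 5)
There is a genuine gap, and it sits exactly where the real content of the lemma lies. Your whole inductive step hinges on showing that every zone of a maximum $c$-bag-chain $Y_1,\dots,Y_{t'}$ has maximum chain length strictly less than $t'$, and the ``central'' failure you flag is not an isolated corner case but the main case. Already for $t'=1$ the splice keeps no $Y_j$ at all for either zone ($Z_0$ or $Z_1$), so a chain of length $1$ inside a zone yields no contradiction; your induction would then need every zone to contain no $c$-bag whatsoever, which is hopeless. In other words, the passage from $t'=0$ to $t'=1$ is precisely the statement that a digraph with no $c$-bag-chain of length $2$ has bounded chromatic number, and this is what the paper isolates as Lemma~\ref{lem:h1k-2chain} and calls ``all the difficulty.'' Neither of your two proposed repairs supplies it: strengthening Claim~\ref{cl:h1k-zone-wit2} to $r\in\{1,2\}$ is not available, because its proof leans on Claim~\ref{cl:h1k-zone-wit}, which only controls interactions between a zone $Z_j$ and bags outside the window $\{B_{j-1},B_j,B_{j+1},B_{j+2}\}$ --- the gaps of $2$ and $3$ are forced by the zone definition, since a vertex of $Z_j$ satisfies $\chi(N^-(x)\cap B_j)\ge c_1$ by construction; and the secondary extremality (maximizing $|Y_1|+\cdots+|Y_{t'}|$) gives no contradiction either, since the zone's chain $W_1,\dots,W_{t'}$ consists of unrelated bags with no reason to have larger total size, and mere existence of another chain of equal length and arbitrary size violates nothing.

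For contrast, the paper does not argue by maximality or splicing at all inside this lemma. It first proves Lemma~\ref{lem:h1k-2chain}: if there is no $d$-bag-chain of length $2$, then $\chi$ is bounded. The proof there is of a different nature: it considers copies (``balls'') of the superhero $J=H\Rightarrow T_k$, shows each ball is red ($\chi(M^+(\hj))\le d$) or blue ($\chi(M^-(\hj))\le d$) using the absence of a $2$-chain, classifies vertices as red, blue or uncolored via long chains of balls (the uncolored part is $K$-free for $K=J\Rightarrow\cdots\Rightarrow J$, a superhero), and then bounds $\chi(N^+(v))$ for red $v$ and $\chi(N^-(v))$ for blue $v$ so that Theorems~\ref{thm:local-global-main} and~\ref{thm:local-global-main-in} apply. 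Lemma~\ref{zone:bounded} then follows by a short bootstrap (Claim~\ref{cl:h1k-3zone}): if $\chi(D')\ge g(g(g(c)))$, one finds a $g(g(c))$-chain of length $2$, inside its bags $g(c)$-chains of length $2$ forming a length-$4$ chain, and inside those a $c$-chain of length $8$, contradicting the absence of a length-$6$ chain. Your proposal contains no substitute for the ball argument, so as written it does not prove the lemma.
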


We defer the proof of Lemma \ref{zone:bounded} for now.

We now show that this is sufficient to prove the theorem. To do so, we
group zones by indices modulo 3 and follow a similar argument as in
Claim \ref{cl:hero4.1}.  Fix $0\le s\le 2$, and for every $i\equiv s
\mod 3$, $1\le i\le t$, let $Z^s_{\lfloor i/3\rfloor}:=Z_i$. 
By Claim \ref{cl:h1k-zone1}, for every $j$ and for every
$v\in Z^s_j$, we have

\begin{itemize}
\item $\chi(N^+(v)\cap \bigcup_{r< j}Z^s_r)\le c_1$, and
\item $\chi(N^-(v)\cap \bigcup_{r>j}Z^s_{r})\le c_1.$
\end{itemize} 

By applying Lemma \ref{zone:bounded} on $D'=Z_i$ and using Claim
\ref{cl: 6-chain}, it follows that $\chi(Z_i)\le c'$ for every $i$.
Thus, $\chi(Z^s_j)\le c'$ for every $j$. We can repeat exactly 
the argument of Claim \ref{cl:hero4.1} (with $c$ replaced by $c'$
and by using Claim \ref{cl:h1k-zone1} instead of Claim \ref{cl:bag2}) to deduce 
$$\chi\Big(\bigcup_{r\ge 0}Z^s_{r}\Big)\le 8c'c_1$$ for every
$s=0,1,2$.  (Remark: we may assume that $c'> c$, which is necessary
for the argument at the end of the proof of Property \ref{en:5.3} that
$3(c_1+c_0)(k+1)\le 2c < 2c'$.)

Hence
$$\chi(\bigcup_{s=0}^{t}Z_s)\le \sum_{s=0}^{2}\chi\Big(\bigcup_{r\ge 0}Z^s_{r}\Big)\le 24c'c_1.$$

From Claims \ref{cl:hero4.1} and the above inequality, we
have $$\chi(D)\le \chi(B_{1,t})+ \chi( \cup_{i=0}^{t} Z_i)\le
8c(c_1+1) + 24c'c_1,$$
which proves Theorem~\ref{thm:h1k-main2}.
\end{proof}

\subsection{Proof of Lemma \ref{zone:bounded}}

Lemma \ref{zone:bounded} asserts that having no bag-chain of length 6
is enough to force a digraph (with bounded independence number and
$\Delta(H,T_k,T_1)$-free) to have bounded chromatic number. It is
trivial by definition that having no bag-chain of length 1 forces
bounded chromatic number. In the next lemma, we show that having no bag-chain
of length 2 can force bounded chromatic number, which contains most of
the difficulty of the proof of Lemma \ref{zone:bounded}.  In the
following lemma, $c,c_0,c_1,h,k$ are the values as in the proof of
Theorem \ref{thm:h1k-main2}. Recall that $c= 2(c_0+c_1)(h+k)$.

\begin{lemma}\label{lem:h1k-2chain}
For every 
$d\ge c$,
there is $g(d)$ such that if a $\Delta(H,T_k,T_1)$-free digraph $D$ 
with $\alpha(D)\le \alpha$ contains no $d$-bag-chain of length $2$, then $\chi(D)\le g(d)$.
\end{lemma}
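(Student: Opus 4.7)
The plan is to prove the contrapositive: if a $\Delta(H, T_k, T_1)$-free digraph $D$ with $\alpha(D) \le \alpha$ has $\chi(D) \ge g(d)$ for a sufficiently large $g(d)$, then $D$ already contains a $d$-bag-chain of length $2$. The skeleton mirrors Claim~\ref{cl:h1k-bag1} and the bag/zone accounting used earlier in Section~\ref{sec:H1K}: one fixes a first bag $B_1$, filters vertices outside $B_1$ by their forward interaction with $B_1$, and then refines to obtain a second bag $B_2$ satisfying the backward chain condition.

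First, since $\chi(D) \ge d$ I would fix any $d$-bag $B_1$ and partition $V(D) \setminus B_1$ into $A := \{v : \chi(N^+(v) \cap B_1) \le c_1\}$ and $\bar A := V(D) \setminus (B_1 \cup A)$. The first subclaim is that $\chi(\bar A)$ is bounded by a constant depending on $d, c_0, c_1, h, k$: otherwise each $v \in \bar A$ carries a copy $\hh_v \subseteq N^+(v) \cap B_1$ of $H$, and the chromatic-splitting computation of Claim~\ref{cl:h1k-bag1} (using $\chi(N^o(\cdot)) \le c_0$ from the outer induction on $\alpha$) forces the existence of some $v \in \bar A$ together with a copy $\hk \subseteq M^+(\hh_v) \cap N^-(v)$ of $T_k$, yielding a forbidden $\Delta(\hh_v, \hk, v)$. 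Consequently $\chi(A)$ is still huge.

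Second, any $d$-bag $B_2 \subseteq A$ already satisfies the first chain condition $\chi(N^+(v) \cap B_1) \le c_1$ for $v \in B_2$, and it remains to enforce the second one: $\chi(N^-(u) \cap B_2) \le c_1$ for every $u \in B_1$. Call $u \in B_1$ \emph{bad} for $B_2$ if this fails; badness exhibits a copy $\hk \subseteq N^-(u) \cap B_2 \subseteq A$ of $T_k$. The core computation is to lower-bound $\chi\bigl(M^-(\hk) \cap N^+(u) \cap B_1\bigr)$ by $c_1$, which will produce a copy $\hh$ of $H$ inside this set and therefore the forbidden structure $\Delta(\hh, \hk, u)$. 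The estimate uses $\chi(N^+(w) \cap B_1) \le c_1$ for each $w \in \hk \subseteq A$, $\chi(N^o(w) \cap B_1) \le c_0$, and the hypothesis $d \ge c = 2(c_0+c_1)(h+k)$ in exactly the shape of Claim~\ref{cl:h1k-bag1}; if this succeeds for every potentially bad $u$, then no $u$ is bad and any $d$-bag $B_2 \subseteq A$ yields the desired length-$2$ chain.

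The main obstacle is that the standard decomposition $B_1 = \{u\} \cup (N^+(u)\cap B_1) \cup (N^-(u) \cap B_1) \cup (N^o(u) \cap B_1)$ also involves the set $N^-(u) \cap B_1$, whose chromatic number we cannot bound a~priori and which could absorb nearly all of $\chi(B_1) = d$, defeating the extraction of $\hh$ inside $N^+(u) \cap B_1$. To handle this I would restrict attention to the sub-bag $B_1^\ast \subseteq B_1$ consisting of those $u \in B_1$ whose forward interaction $\chi(N^+(u)\cap B_1)$ dominates, noting that by a standard chromatic averaging one of the two sides $N^+(u) \cap B_1$ or $N^-(u) \cap B_1$ must have large chromatic number and that the argument is symmetric (treating the reverse case via a symmetric choice of $\hh$ and $\hk$, since $\Delta(H,T_k,T_1)$ is a cyclic structure). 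Provided $g(d)$ is chosen to exceed $d$ plus the bound on $\chi(\bar A)$ from Step~1 plus the slack required for the refinement inside $B_1$, we obtain both chain conditions simultaneously, which proves the lemma.
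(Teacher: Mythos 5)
Your overall strategy---extracting a length-$2$ chain directly from large chromatic number---has two genuine gaps, both stemming from the same issue: the chromatic-splitting computation of Claim~\ref{cl:h1k-bag1} only works inside an \emph{intermediate} bag whose interactions with the relevant vertices are already controlled (by the chain definition and the induction on the span), and in your setting no such set exists. In Step~1, to force a copy of $T_k$ into $M^+(\hh_v)\cap N^-(v)$ you must run the computation inside some set of chromatic number at least $c$ in which $\chi(N^-(x)\cap\,\cdot\,)\le c_1$ for every $x\in \hh_v$ and $\chi(N^+(v)\cap\,\cdot\,)\le c_1$; the only candidate is $B_1$ itself, where $\chi(N^-(x)\cap B_1)$ is completely uncontrolled and $\chi(N^+(v)\cap B_1)>c_1$ by the very definition of $\bar A$, so the decomposition yields nothing and the boundedness of $\chi(\bar A)$ is unproved. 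In Steps~2--3, to rule out a bad $u\in B_1$ you need $\chi\bigl(M^-(\hk)\cap N^+(u)\cap B_1\bigr)\ge c_1$, which (after subtracting the $N^o$ and $N^+(\hk)$ terms) requires $\chi(N^+(u)\cap B_1)$ to be large; all you can actually conclude is that bad vertices have \emph{small} forward interaction inside $B_1$, not that they do not exist. The proposed repair via the sub-bag $B_1^\ast$ does not close this: $B_1^\ast$ need not be a $d$-bag, largeness of $\chi(N^+(u)\cap B_1)$ does not give largeness of $\chi(N^+(u)\cap B_1^\ast)$, and the ``symmetric case'' is not symmetric---$\Delta(H,T_k,T_1)$ needs the cyclic orientation $H\Rightarrow T_k\Rightarrow u\Rightarrow H$, so with $\hk\subseteq N^-(u)\cap B_2$ fixed, a large $N^-(u)\cap B_1$ offers no placement of $H$ compatible with that orientation (the paper handles the two orientations by genuinely different arguments elsewhere, precisely because of this asymmetry).

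For contrast, the paper's proof never attempts to build the $2$-chain from large chromatic number. It uses the absence of a $d$-bag-chain of length $2$ only through a dichotomy (Claim~\ref{cl:uhk1-ball}): every copy (``ball'') $\hj$ of $J=H\Rightarrow T_k$ satisfies $\chi(M^+(\hj))\le d$ or $\chi(M^-(\hj))\le d$, since otherwise those two common neighborhoods would themselves form the forbidden chain---here the ball supplies the $T_k$ with complete arcs in the right directions, which is exactly the mediating structure your computations lack. It then classifies vertices as red, blue or uncolored via $\Rightarrow$-chains of $c_1+1$ monochromatic balls, bounds the uncolored part because it is $K$-free for $K$ the $(2c_1+2)$-fold $\Rightarrow$-power of $J$ (a superhero by Theorem~\ref{lem:HH-main}), and bounds the red and blue parts by showing every red vertex has $\chi(N^+(v))$ bounded (resp.\ blue, $N^-(v)$) and invoking the local-to-global Theorems~\ref{thm:local-global-main} and~\ref{thm:local-global-main-in}. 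Your sketch uses neither the superhero property of $H\Rightarrow T_k$ nor the local-to-global theorem; these are the missing ideas, and without something of that strength the direct extraction you propose does not go through.
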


\begin{proof}

Let $J$ be the tournament $H\Rightarrow T_k$. By Theorem
\ref{lem:HH-main}, $J$ is a superhero and contains both $H$ and $T_k$
as subtournaments.  Let $D$ be a digraph with vertex set $V$
satisfying the hypotheses of the lemma.  A copy of $J$ in $D$ is
called a \emph{ball}. A ball $\hj$ is colored \emph{red} if
$\chi(M^+(\hj))\le d$ and \emph{blue} if $\chi(M^-(\hj))\le d$ (note
that we do not color vertices of a ball but color the ball as a single
object). A ball certainly can be both red and blue, in which case we
color it arbitrarily with one of the colors.

\begin{claim}\label{cl:uhk1-ball}
Every ball is either red or blue.
\end{claim}
\begin{cproof}
Suppose for a contradiction that a ball $\hj$ is neither red nor
blue. Then there are $B_1\subseteq M^-(\hj)$ and $B_2\subseteq
M^+(\hj)$ such that $\chi(B_1)=d$ and $\chi(B_2)=d$.  Suppose that
$\chi(N^+(v)\cap B_{1}) > c_1$ for some $v\in B_2$. Let $\hh$ be a
copy of $H$ in $N^+(v)\cap B_{1}$. Let $\hk$ be a copy of $T_k$ in
$\hj$. Then $\Delta(\hh,\hk,v)$ forms a copy of $\Delta(H,T_k,T_1)$, a
contradiction. Hence $\chi(N^+(v)\cap B_{1})\le c_1$ for every $v\in
B_2$. Similarly, $\chi(N^-(v)\cap B_{2})\le c_1$ for every $v\in B_1$,
for otherwise a copy of $T_k$ in $N^{-}(v) \cap B_2$ would yield a
contradiction. Hence $B_1,B_2$ is a $d$-bag-chain of length 2, a
contradiction.
\end{cproof}

For every vertex $v$ of $V$, we color $v$ as follows. If there are
$c_1+1$ vertex-disjoint red balls $\hj_1,\dots,\hj_{c_1+1}$ such that
$\hj_i$ has complete arcs to $\hj_j$ for every $i<j$, and $v\in
\hj_{c_1+1}$, then we color $v$ \emph{red}. If there are $c_1+1$ blue
balls $\hj_1,\dots,\hj_{c_1+1}$ such that $\hj_i$ has complete arcs to
$\hj_j$ for every $i<j$, and $v\in \hj_{1}$, then we color $v$
\emph{blue}.  If $v$ satisfies both conditions, we color $v$
arbitrarily.  After the process of coloring, we obtain a partition of
$V$ into $R$ the set of red vertices, $B$ the set of blue vertices,
and $U$ the set of uncolored vertices.

\begin{claim}
There is $d_1$ such that $\chi(U)\le d_1$.
\end{claim}
\begin{cproof}
Let $K$ be the tournament of $J\Rightarrow J\Rightarrow \dots
\Rightarrow J$ ($2c_1+2$ times $J$). Since $J$ is a superhero, then so
is $K$ by Theorem~\ref{lem:HH-main}. Hence there is $d_1$ such that
every $K$-free digraph $D'$ with $\alpha(D')\le \alpha$ has chromatic
number at most $d_1$.  Suppose that $U$ contains a copy of $K$, say
$\hat{K}$. Since every ball is either red or blue, we can find $c_1+1$
vertex-disjoint monochromatic balls $\hj_1,\dots,\hj_{c_1}$ in $\hat
K$ such that $\hj_i$ has complete arcs to $\hj_j$ for every
$i<j$. Then either vertices of $\hj_1$ are blue or vertices of
$\hj_{c_1+1}$ are red, a contradiction with the fact that all vertices
of $U$ are uncolored. Hence $U$ is $K$-free, and so $\chi(U)\le d_1$.
\end{cproof}

It remains to show that $R$ and $B$ have bounded chromatic number,
which can be done by applying Theorems \ref{thm:global}. To do so, we
need to prove that $N^+(v)$ has bounded chromatic number for every
$v\in R$.
\begin{claim}\label{cl:redball}
There is $d_2$ such that $\chi(N^+(v))\le d_2$ for every $v\in R$.
\end{claim}
\begin{cproof}

Fix $v\in R$. Then there are vertex-disjoint red balls
$\hj_1,\dots,\hj_{c_1+1}$ where $v\in \hj_{c_1+1}$ and $\hj_i$ has
complete arcs to $\hj_j$ for every $i< j$. Let
$L=\bigcup_{i=1}^{c_1}\hj_1$. Note that $v$ is seen by all vertices of
$L$.  For every $u\in L$, we have $\chi( N^+(v)\cap N^o(u))\le c_0$.
Hence
$$ \chi(N^+(v)\cap N^o(L))\le \sum_{u\in L} \chi(N^+(v)\cap N^o(u))\le |L| c_0.$$

For every partition of $L$ into $L_1,L_2$ ($L_1$ or $L_2$ may be
empty), let $Y_{L_1,L_2}=N^+(v)\cap M^-(L_1)\cap M^+(L_2)$. Observe
that for every vertex $x\in N^+(v)\setminus{N^o(L)}$ (note that
$x\notin L$ since $v\in \hj_{c_1+1}$), there is a partition of $L$
into some $L_1,L_2$ such that $x$ sees all vertices of $L_1$ and is
seen by all vertices of $L_2$, and so $x\in Y_{L_1,L_2}$. Hence
$$N^+(v)\setminus{N^o(L)}=\bigcup_{(L_1,L_2)}Y_{L_1,L_2}.
$$

We now show that $\chi(Y_{L_1,L_2})\le d$ for every $Y_{L_1,L_2}$.

\begin{itemize}
\item If, for some $1 \leq i \leq c_1$, there is $\hj_i\subseteq L_2$,
  recall that $\chi(M^+(\hj_i))\le d$ since $\hj_i$ is red.  Since
  $Y_{L_1,L_2}\subseteq M^+(L_2)\subseteq M^+(\hj_i)$, we have that
  $\chi(Y_{L_1,L_2})\le d$.

\item Otherwise, if there is no $\hj_i\subseteq L_2$, then $L_1$
  contains at least one vertex of each $\hj_i$, $1 \leq i \leq c_1$,
  and so $|L_1|\ge c_1$. Hence $L_1$ has a copy of $T_k$, say
  $\hk$. Note that all vertices of $\hk$ see $v$ since $v\in
  \hj_{c_1+1}$. If $\chi(Y_{L_1,L_2})> c_1$, then $Y_{L_1,L_2}$
  contains a copy of $H$, say $\hh$, then $\Delta(\hh,\hk,v)$ forms a
  copy of $\Delta(H,T_k,T_1)$, a contradiction. Hence
  $\chi(Y_{L_1,L_2})\le c_1\le d$.
\end{itemize}

Note that $|L|= |J|c_1=(h+k)c_1$. Thus, there are $2^{(h+k)c_1}$ possible ways to partition  $L$ into $L_1,L_2$. 
Hence 
\begin{align*}
\chi(N^+(v)) \le&\  \chi\Big(N^+(v)\cap N^o(L)\Big)+\chi\Big(N^+(v)\setminus{N^o(L)}\Big)\\ 
\le&\  |L|c_0+ \sum_{(L_1,L_2)}\chi(Y_{L_1,L_2})\\
\le&\ {(h+k)c_1} c_0 +2^{(h+k)c_1}d.
\end{align*}
Set $d_2:={(h+k)c_1}c_0+ 2^{{(h+k)c_1}}d.$ This completes the proof of the claim.
\end{cproof}
Then $\chi(N^+(v)\cap R)\le \chi(N^+(v))\le d_2$ for every $v\in R$. Then by applying Theorem~\ref{thm:local-global-main} for digraph $R$ with $t=d_2$, we have $\chi(R)\le d_3$ for some $d_3$. 

We now prove that $B$ has bounded chromatic number. The proof is
slightly different from that of Claim \ref{cl:redball} due to
asymmetry of $\Delta(H,T_k,T_1)$.

\begin{claim}
There is $d_4$ such that $\chi(N^-(v))\le d_4$ for every $v\in B$.
\end{claim}
\begin{cproof}
Fix $v\in B$. Then there are vertex-disjoint blues balls
$\hj_1,\dots,\hj_{c_1+1}$ where $v\in \hj_{1}$ and $\hj_i$ has
complete arcs to $\hj_j$ for every $i< j$. Let
$L=\bigcup_{i=2}^{c_1+1}\hj_i$. Note that $v$ sees all vertices of
$L$.  For every $u\in L$, we have
$$ \chi(N^-(v)\cap N^o(L))\le \sum_{u\in L} \chi(N^-(v)\cap N^o(u))\le |L| c_0.$$

For every partition of $L$ into $L_1,L_2$ ($L_1$ or $L_2$ may be
empty), let $Y_{L_1,L_2}=N^-(v)\cap M^-(L_1)\cap M^+(L_2)$. Observe
that for every vertex $x\in N^-(v)\setminus{N^o(L)}$ (note that
$x\notin L$ since $v\in \hj_{1}$), there is a partition of $L$ into
some $L_1,L_2$ such that $x$ sees all vertices of $L_1$ and is seen by
all vertices of $L_2$, and so $x\in Y_{L_1,L_2}$. Hence
$$N^-(v)\setminus{N^o(L)} = \bigcup_{(L_1,L_2)}Y_{L_1,L_2}.
$$

We now show that $\chi(Y_{L_1,L_2})\le d$ for every $Y_{L_1,L_2}$.

\begin{itemize}

\item If there is $\hj_i\subseteq L_2$, then $\hj_i$ contains a copy
  of $H$, say $\hh$. Note that $v$ is in the first ball, so $v$ sees
  all vertices of $\hh$. If $\chi(Y_{L_1,L_2}) > c_1$, then
  $Y_{L_1,L_2}$ contains a copy of $T_k$, say $\hk$, then
  $\Delta(\hh,\hk,v)$ forms a copy of $\Delta(H,T_k,T_1)$, a
  contradiction. Hence $\chi(Y_{L_1,L_2})\le c_1\le d$.

\item If $\hj_{c_1+1}\subseteq L_1$, recall that $\chi(M^-(\hj_{c_1+1}))\le d$ since $\hj_{c_1+1}$ is blue.
Since $Y_{L_1,L_2}\subseteq M^-(L_1)\subseteq M^-(\hj_{c_1+1})$, we have that $\chi(Y_{L_1,L_2})\le d$.

\item Otherwise, we have two remarks: (1) $\hj_{c_1+1}$ must have a
  vertex in $L_2$, say $z$. (2) Every $\hj_{i}, 2\le i\le c_1$ must
  have a vertex in $L_1$. Then $|L_1\cup\{v\}| > c_1$, and so
  $L_1\cup\{v\}$ contains a copy of $T_k$, say $\hk'$, such that all
  vertices of $\hk'$ are in one of the $\hj_i$, $1 \leq i \leq c_1$
  (note that $v\in \hj_1$). Observe that $z$ is seen by all vertices
  of $\hk'$ since $z\in \hj_{c_1+1}$. If $\chi(Y_{L_1,L_2}) > c_1$,
  then $Y_{L_1,L_2}$ contains a copy of $H$, say $\hh'$, then
  $\Delta(\hh',\hk',z)$ forms a copy of $\Delta(H,T_k,T_1)$, a
  contradiction. Hence $\chi(Y_{L_1,L_2})\le c_1\le d$.
\end{itemize}

Hence using a computation similar to the claim above, we have $\chi(N^-(v))\le d_4$, where $d_4:= {(h+k)c_1}c_0+ 2^{{(h+k)c_1}}d.$ This completes the proof of the claim.
\end{cproof}
Then $\chi(N^-(v)\cap B)\le \chi(N^-(v))\le d_4$ for every $v\in
B$. Then by applying Theorem~\ref{thm:local-global-main-in} to the
digraph $B$ with $t=d_4$, we have $\chi(B)\le d_5$ for some $d_5$.
Hence $$\chi(D)\le \chi(B)+\chi(R)+\chi(U)\le d_1+d_3+d_5.$$ This
completes the proof of Lemma~\ref{lem:h1k-2chain}.
\end{proof}

We are now ready to prove Lemma \ref{zone:bounded}.

\begin{claim}\label{cl:h1k-3zone}
If a $\Delta(H,T_k,T_1)$-free digraph $D$ 
with $\alpha(D)\le \alpha$ contains no $c$-bag-chain of length $6$, then 
$\chi(D)\le g(g(g(c)))$ where $g$ is the function in Lemma~\ref{lem:h1k-2chain}. 
\end{claim}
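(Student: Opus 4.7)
The plan is to prove the contrapositive of Claim~\ref{cl:h1k-3zone}: if $\chi(D)>g(g(g(c)))$, then $D$ contains a $c$-bag-chain of length $8$ (and in particular of length $6$). The strategy is three nested applications of Lemma~\ref{lem:h1k-2chain} with parameters $g(g(c))$, $g(c)$, and $c$, exploiting the fact that the bag-chain interaction conditions are inherited by subsets.

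Concretely, I would first apply Lemma~\ref{lem:h1k-2chain} with $d=g(g(c))$ to $D$ itself, obtaining a $g(g(c))$-bag-chain $A_1,A_2$ of length~$2$. Next, for each $i\in\{1,2\}$, I apply Lemma~\ref{lem:h1k-2chain} to the induced subdigraph $D[A_i]$ with parameter $d=g(c)$, producing a $g(c)$-bag-chain $A_i^1,A_i^2$ inside $A_i$. Finally, I apply the lemma once more to each $D[A_i^j]$ with parameter $c$, producing a $c$-bag-chain $A_i^{j,1},A_i^{j,2}$ inside $A_i^j$. This yields eight pairwise disjoint $c$-bags arranged in the natural linear order
\[
A_1^{1,1},\ A_1^{1,2},\ A_1^{2,1},\ A_1^{2,2},\ A_2^{1,1},\ A_2^{1,2},\ A_2^{2,1},\ A_2^{2,2}.
\]

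To verify that this ordering forms a genuine $c$-bag-chain, one must check $\chi(N^+(v)\cap B_{i-1})\le c_1$ and $\chi(N^-(v)\cap B_{i+1})\le c_1$ for every adjacent pair. The seven adjacencies split into three types: the four intra-innermost-bag adjacencies inside some $A_i^j$ hold directly by the third application of the lemma; the two adjacencies across the break between $A_i^1$ and $A_i^2$ inside some $A_i$ follow from the $(A_i^1,A_i^2)$-chain combined with $A_i^{j,k}\subseteq A_i^j$ and the monotonicity $\chi(N^{\pm}(v)\cap A_i^{j,k})\le\chi(N^{\pm}(v)\cap A_i^j)$; and the single top-level adjacency between $A_1^{2,2}$ and $A_2^{1,1}$ follows analogously from the $(A_1,A_2)$-chain together with $A_i^{j,k}\subseteq A_i$. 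Since $8\ge 6$, this contradicts the hypothesis of Claim~\ref{cl:h1k-3zone}.

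The main technical obstacle is the tension between Lemma~\ref{lem:h1k-2chain} producing bags of chromatic number exactly equal to the parameter $d$ and the iteration needing a strict inequality $\chi(A_i)>d$ to re-apply the lemma at the next level. I would resolve this by working with slightly inflated thresholds $f_0=c$ and $f_{i+1}=g(f_i)+1$, so that the assumption $\chi(D)>f_3$ initiates the cascade and each extracted bag has chromatic number strictly larger than the parameter used at the next level; the small additive slack is absorbed into the stated bound $g(g(g(c)))$. Apart from this bookkeeping, and the observation that both $\Delta(H,T_k,T_1)$-freeness and the independence-number bound pass to induced subdigraphs so that Lemma~\ref{lem:h1k-2chain} applies at each level, the argument is a routine subset-inheritance verification and no ingredient beyond Lemma~\ref{lem:h1k-2chain} is needed.
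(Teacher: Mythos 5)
Your proposal is correct and follows essentially the same route as the paper's own proof: three nested applications of Lemma~\ref{lem:h1k-2chain} with parameters $g(g(c))$, $g(c)$ and $c$, using the fact that the bag-chain interaction conditions (with the same fixed constant $c_1$) are inherited by subsets to assemble the eight innermost $c$-bags into a $c$-bag-chain of length $8$, contradicting the absence of a chain of length $6$. The only caveat is your remark that the additive slack from the thresholds $f_{i+1}=g(f_i)+1$ is ``absorbed into the stated bound'': literally you obtain the slightly weaker bound $g(g(g(c)+1)+1)+1$ rather than $g(g(g(c)))$, but this is harmless since Lemma~\ref{zone:bounded} only needs some fixed constant $c'$, and the paper's own argument silently elides the same off-by-one by re-applying the lemma to bags whose chromatic number equals, rather than exceeds, the relevant value of $g$.
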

\begin{cproof}
Suppose for a contradiction that $\chi(D)\ge g(g(g(c)))$. We will show that $D$ contains a  $c$-bag-chain of length 8. By applying Lemma~\ref{lem:h1k-2chain} to $D$ with $d:=g(g(c))$, we have that $D$ contains a $g(g(c))$-bag-chain of length two, say $X_1,X_2$. Hence $\chi(X_1)=\chi(X_2)= g(g(c))$ and 
\begin{itemize}
\item $\chi(N^+(v)\cap X_1)\le c_1$ for every $v\in X_2$, and
\item $\chi(N^-(v)\cap X_2)\le c_1$ for every $v\in X_1$.
\end{itemize}

Apply Lemma~\ref{lem:h1k-2chain} again to $X_1$ (respectively, $X_2$)
with $d:=g(c)$, we obtain a $g(c)$-bag-chain of length two, say
$Y_1,Y_2$ in $X_1$ (respectively, $Y_3,Y_4$ in $X_2$).  Since
$Y_2\subset X_1$ and $Y_3\subset X_2$, we have
\begin{itemize}
\item $\chi(N^+(v)\cap Y_2)\le c_1$ for every $v\in Y_3$, and
\item $\chi(N^-(v)\cap Y_3)\le c_1$ for every $v\in Y_2$.
\end{itemize}
Hence by definition, $Y_1,\dots,Y_4$ forms a $g(c)$-bag-chain of length 4. 
Note that $\chi(Y_s)=g(c)$ for every $1\le s\le 4$.
Repeating the argument we obtain a $c$-bag-chain of length 2 inside each 
$Y_s$, and hence obtain a $c$-bag-chain $B_1,\dots,B_8$ of length 8 inside $D$. This contradicts the fact
that $D$ has no $c$-bag-chain of length 6, and so completes the proof. 
\end{cproof}

Claim~\ref{cl:h1k-3zone} proves Lemma \ref{zone:bounded}, concluding the proof
of Theorem~\ref{thm:h1k-main2}.



\section{Triangle-free digraphs} \label{sec:triangle-free}

In this section, we prove Theorem \ref{thm:c3}.  We present an
efficient algorithm to color a $C_3$-free digraph whose independence
number is $\alpha(D) \leq \alpha$ with at most $35^{\alpha-1}\alpha!$
colors.  For a digraph $D$, let $n = |V(D)|$ denote the size of its
vertex set.  Let $poly(n)$ denote the function $n^k$ for some rational
number $k > 0$.  With respect to the time complexity of our algorithm,
our main goal is to show that it is a polynomial depending only on $n$
(i.e., $poly(n)$).  We therefore do not optimize the running time nor
do we provide a tight analysis.  Moreover, note that each time we
argue that a subroutine can be performed in $poly(n)$ time, the
(implicit) value of $k$ may be different.

For each integer $\alpha\ge 1$, define $h(\alpha)$ to be the minimum
number such that every $C_3$-free digraph $D$ with $\alpha(D)\le
\alpha$ has chromatic number at most $h(\alpha)$. Conjecture
\ref{conj:poly-C_3} asserts that $h(\alpha)\le \alpha^{\ell}$ for some
${\ell}$. Clearly $h(1)=1$ since every $C_3$-free tournament is
acyclic. However, $h(\alpha)$ is still unknown for all $\alpha \ge
2$. We believe that $h(2)=2$ or $3$ even though the best bound we have
is around 25 (by tweaking the proof of Theorem
\ref{thm:c3}). Theorem \ref{thm:c3} gives an exponential upper bound
for $h(\alpha)$ by the function $g(\alpha) := 35^{\alpha-1}\alpha!$.
Since our proof of Theorem \ref{thm:c3} will use induction, we will
assume that a $C_3$-free digraph with independence number $\alpha-1$
can be colored with at most $g(\alpha-1)$ colors and we will prove it
for $\alpha$.  Note that $g(1) = 1$.  Let us restate Theorem \ref{thm:c3}.

\begin{theorem}\label{thm:c3-res}
Let $D$ be a $C_3$-free digraph where $\alpha(D)\le \alpha$.  Then
$\chi(D) \leq 35^{\alpha-1}\alpha!$ and this coloring can be found in
time $poly(n)$.
\end{theorem}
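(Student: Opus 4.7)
The plan is to induct on $\alpha$, with base case $\alpha = 1$ handled immediately: a $C_3$-free tournament is transitive, so $\chi(D) = 1 = g(1)$ where $g(\alpha) := 35^{\alpha-1}\alpha!$. For the inductive step, I would rely on two structural observations. First, for any vertex $v \in V(D)$, the set $N^o(v)$ has $\alpha(N^o(v)) \le \alpha - 1$ (any independent subset together with $v$ is independent in $D$), so by induction $\chi(N^o(v)) \le g(\alpha - 1)$. Second, since $D$ is $C_3$-free, no arc goes from $N^+(v)$ to $N^-(v)$---such an arc would close a directed triangle through $v$---so $N^-(v) \to N^+(v)$ in the paper's notation and therefore $\chi(\{v\}\cup N(v)) = \max(\chi(N^-(v) \cup \{v\}), \chi(N^+(v)))$. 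Combining the two observations, $\chi(D) \le g(\alpha-1) + \max(\chi(N^-(v)\cup\{v\}), \chi(N^+(v)))$ for any $v$ I pick.

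The core task is therefore to bound $\chi(N^+(v))$ (and symmetrically $\chi(N^-(v))$) by roughly $(35\alpha - 1)\, g(\alpha-1)$. Since $\alpha(N^+(v))$ can still be $\alpha$, the inductive hypothesis does not apply directly. My approach is to reapply the same structural split iteratively inside $N^+(v)$: pick $u_1 \in N^+(v)$, peel off $N^o(u_1)\cap N^+(v)$ (which has independence number at most $\alpha-1$ and so is $g(\alpha-1)$-colorable), and descend to the hard remainder $N^+(u_1)\cap N^+(v) = M^+(\{v,u_1\})$, again invoking the $C_3$-free split on $u_1$. Continuing this way, I build a transitive tournament $v, u_1, u_2, \ldots$ inside $D$, peeling off one $g(\alpha-1)$-colorable piece per level, with the running hard set at level $i$ being $M^+(\{v, u_1, \ldots, u_{i-1}\})$.

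The main obstacle is controlling the depth of this inner recursion. Because the independence number does not decrease inside $N^+(v)$, a naive bound on the depth is only the size of the largest transitive tournament in $D$, which can grow with $n$. Resolving this requires a careful charging argument specific to the $C_3$-free structure, showing that the total cost of all peeled pieces, together with the symmetric decomposition for $N^-(v)$ and the outer $N^o(v)$ piece, sums to at most $g(\alpha) = 35\alpha \cdot g(\alpha-1)$; the constant $35$ should arise from the arithmetic of combining the two symmetric sides with fixed overhead from the peeling steps. Polynomial running time follows because each step (vertex selection, computing induced neighborhoods, invoking the inductive coloring) is polynomial, and the overall recursion has polynomial depth in $n$.
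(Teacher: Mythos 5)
Your opening reductions are fine: $\chi(N^o(v))\le g(\alpha-1)$ by induction, and $C_3$-freeness forbids arcs from $N^+(v)$ to $N^-(v)$, so $\chi(D)\le g(\alpha-1)+\max\bigl(\chi(N^-(v)\cup\{v\}),\chi(N^+(v))\bigr)$. But the entire difficulty of the theorem lies in the step you leave open: bounding $\chi(N^+(v))$ when the independence number does not drop. Your peeling scheme along a chain $v,u_1,u_2,\ldots$ removes one set of independence number at most $\alpha-1$ per level, but nothing bounds the depth by a function of $\alpha$, and nothing lets the peeled pieces $N^o(u_i)\cap M^+(\{v,u_1,\ldots,u_{i-1}\})$ share colors: in a $C_3$-free digraph there is no directional control between $N^o(u_i)$ and $N^+(u_i)$, so arcs between pieces at different levels can go both ways. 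Hence the honest cost of your scheme is (depth)$\cdot g(\alpha-1)$ with depth potentially linear in $n$, and the ``careful charging argument'' you invoke to compress this to $35\alpha\cdot g(\alpha-1)$ is precisely the missing content of the proof, not a routine arithmetic check.

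The paper's mechanism is genuinely different and never grows a chain vertex by vertex. It rests on dominating sets and the notion of a \emph{bag}: $B$ is a bag if every three vertices outside $B$ have a common neighbour in $B$; if $B$ is not a bag, three witnesses split it into three sets of independence number at most $\alpha-1$. A \emph{poor} bag (every $v\in B$ has $N^+(v)\cap B$ or $N^-(v)\cap B$ not a bag) is colored with $8\alpha\, g(\alpha-1)$ colors, using the fact, proved via an acyclic dominating set followed by a stable dominating set, that a $C_3$-free digraph contains a set $Y$ of at most $\alpha$ vertices with $V=Y\cup N^o(Y)\cup N^+(Y)$. The procedure {\sc Find-Chain} then decomposes $V(D)$ into a chain of poor bags plus zones, and $C_3$-freeness forces arcs between distant bags and zones to point forward, so color palettes are reused along the chain; each zone is in turn either colorable like a poor bag or not a bag at all. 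That forward-arc reuse, absent from your scheme, is what caps the total at $(32\alpha+3)\,g(\alpha-1)\le 35\alpha\, g(\alpha-1)$. As written, your proposal has a genuine gap at its central step.
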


The rest of this section is devoted to proving Theorem
\ref{thm:c3-res}.  We begin with some observations regarding the size
of a dominating set in a digraph.

\begin{proposition}\label{prop:dom-to-acyclic}
A digraph $D$ has an acyclic dominating set, and this set can be
found in time $poly(n)$.
\end{proposition}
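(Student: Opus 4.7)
The plan is to mimic the inductive argument used for Proposition~\ref{prop:dom-acyclic}, adapted to handle the fact that a general digraph need not contain a vertex of in-degree zero. The single new ingredient is the direction of pruning: instead of picking a source vertex and recursing on its non-neighbors, I would pick an \emph{arbitrary} vertex $v \in V(D)$ and recurse on the induced subdigraph $D' := D[V(D)\setminus(\{v\}\cup N^+(v))]$, removing $v$ together with its out-neighbors.

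Formally, I would proceed by induction on $|V(D)|$, with $V(D)=\emptyset$ as the trivial base case. In the inductive step, choose any $v\in V(D)$, apply the hypothesis to $D'$ to obtain an acyclic dominating set $S'$ of $D'$, and return $S := S'\cup\{v\}$. Domination is immediate because $V(D)\setminus S = N^+(v) \cup (V(D')\setminus S')$, the first piece being seen by $v$ and the second by $S'$ (and the induced arcs inside $V(D')$ are the same in $D$ and in $D'$). Acyclicity follows because $S'\subseteq V(D')$ is disjoint from $N^+(v)$, so $v$ has no out-neighbor in $S$ and is therefore a sink of $D[S]$; any directed cycle of $D[S]$ would then have to lie entirely in $D[S']$, which is acyclic by the inductive hypothesis.

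I do not foresee any serious obstacle; the only subtle point is recognizing that it is the \emph{out}-neighbors of $v$ (and not, say, the in-neighbors, as one might naively try to parallel the acyclic case) that must be stripped in the recursion, so that $v$ is forced to be a sink of the returned set and the creation of a new directed cycle is thereby blocked. For the algorithmic statement, each recursive call decreases $|V(D)|$ by at least one, so the recursion has depth at most $n$; each call only needs to identify $N^+(v)$ and form an induced subgraph, both of which take $poly(n)$ time, giving a total running time of $poly(n)$.
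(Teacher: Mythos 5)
Your proposal is correct and is essentially identical to the paper's proof: the paper also picks an arbitrary vertex $v$, recurses on $D[N^o(v)\cup N^-(v)]$ (which is exactly $V(D)\setminus(\{v\}\cup N^+(v))$), and adds $v$, with acyclicity following because $v$ sees no vertex of the recursively obtained set. The only cosmetic difference is your empty base case versus the paper's $n=1$.
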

\begin{proof}
We proceed by induction on $n$. The statement clearly holds for
$n=1$. For $n>1$, pick an arbitrary vertex $v$. Then
$V(D)\setminus{v}=N^o(v)\cup N^-(v)\cup N^+(v)$. Applying induction to
the subgraph, $D[N^o(v)\cup N^-(v)]$, we obtain an acyclic dominating
set $S'$. Then $S:=S'\cup \{v\}$ is a dominating set of $D$.  Note
that $S'\subseteq N^o(v)\cup N^-(v)$, so $v$ does not see any vertex
of $S'$. Hence $S$ is an acyclic set since $S'$ is an acyclic set.
The running time for this procedure is $poly(n)$.
\end{proof}

\begin{proposition}\label{prop:qua-dom}
Given a $C_3$-free digraph $D$, there is a set $Y\subseteq V(D)$ of
size at most $\alpha(D)$ such that $V(D)=Y\cup N^o(Y)\cup N^+(Y)$, and
this set can be found in time $poly(n)$.
\end{proposition}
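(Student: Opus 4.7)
The plan is to combine Propositions~\ref{prop:dom-to-acyclic} and~\ref{prop:dom-acyclic} with the $C_3$-free hypothesis, by dominating $D$ in two stages. First, I would invoke Proposition~\ref{prop:dom-to-acyclic} to produce, in $poly(n)$ time, an acyclic dominating set $S$ of $D$, so that $V(D) = S \cup N^+(S)$. Since $D[S]$ is acyclic and $\alpha(D[S]) \le \alpha(D)$, Proposition~\ref{prop:dom-acyclic} applied to $D[S]$ yields a stable set $Y \subseteq S$ that dominates $D[S]$ and satisfies $|Y| \le \alpha(D[S]) \le \alpha(D)$; its inductive proof (pick a source, recurse on its non-neighbors) runs in $poly(n)$ time. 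The claim to establish is that this $Y$ already satisfies $V(D) = Y \cup N^o(Y) \cup N^+(Y)$.

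To verify the claim, I would split $V(D)$ according to $S$ and $Y$. Vertices in $Y$ are trivially covered; every $u \in S \setminus Y$ is seen by some $y \in Y$ because $Y$ dominates $D[S]$, hence $u \in N^+(Y)$. The interesting case is a vertex $v \in V(D) \setminus S$. Since $S$ is a dominating set of $D$, some $s \in S$ satisfies $s \to v$. If $s \in Y$, then $v \in N^+(Y)$ directly; otherwise $s \in S \setminus Y$, and there is $y \in Y$ with $y \to s$, producing the directed $2$-path $y \to s \to v$ on three distinct vertices. At this point the $C_3$-free hypothesis forbids the arc $v \to y$, so either $y \to v$ (giving $v \in N^+(Y)$) or $y$ and $v$ are non-adjacent (giving $v \in N^o(Y)$), as required.

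The single nontrivial step -- and the conceptual heart of the argument -- is the use of $C_3$-freeness to close the $2$-path $y \to s \to v$: without it, one could only guarantee that $S$ itself dominates $V(D)$, and $|S|$ has no reason to be bounded by $\alpha(D)$. It is precisely this ``$C_3$-free two-hop'' observation that allows the shrinking from the possibly large acyclic dominating set $S$ to a set $Y$ of size at most $\alpha(D)$ that dominates $D$ up to non-adjacency. The algorithmic statement then follows for free: Proposition~\ref{prop:dom-to-acyclic} and the constructive proof of Proposition~\ref{prop:dom-acyclic} both run in $poly(n)$ time, so the overall procedure does too.
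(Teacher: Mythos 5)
Your proposal is correct and follows essentially the same route as the paper: obtain an acyclic dominating set $S$ via Proposition~\ref{prop:dom-to-acyclic}, extract a stable dominating set $Y$ of $D[S]$ of size at most $\alpha(D)$ via Proposition~\ref{prop:dom-acyclic}, and use $C_3$-freeness on the two-hop path $y \to s \to v$ to conclude $v \in N^+(Y) \cup N^o(Y)$. The only difference is presentational: the paper argues by contradiction (a vertex outside $Y \cup N^o(Y) \cup N^+(Y)$ would see all of $Y$ and yield a directed triangle), while you run the same triangle argument directly, which is equivalent.
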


\begin{proof}
First apply Proposition \ref{prop:dom-to-acyclic} to obtain an acyclic
dominating set $S$ of $D$. Then apply Proposition
\ref{prop:dom-acyclic} to $D[S]$ to obtain a stable dominating set $Y$
of $D[S]$ of size at most $\alpha (D)$ in time $poly(n)$.

We now show that $Y$ is a set with the desired properties.  Suppose
for a contradiction that there is $v\notin Y\cup N^o(Y)\cup
N^+(Y)$. Then $Y\subseteq N^+(v)$ and $v\notin S$ since $Y$ dominates
all vertices of $S$. There is $u\in S$ seeing $v$ since $S$ is a
dominating set of $D$. Note that $u\notin Y$; otherwise this
contradicts $Y\subseteq N^+(v)$. There is $y\in Y$ seeing $u$ since
$Y$ is a dominating set of $S$. Then $u,v,y$ are distinct vertices
where $u$ sees $v$, $v$ sees $y$, and $y$ sees $u$. Hence we obtain a
copy of $C_3$ in $D$, a contradiction.
\end{proof}

We now present some definitions and useful lemmas.  First, we
re-define a {\em bag} so that it retains the useful properties of a
bag as defined in Section \ref{sec:H1K} and so that it can be tested
efficiently.

\begin{definition}
For a digraph $D$, we say that $B \subseteq V(D)$ is a {\em bag} of $D$, if
every three distinct vertices $\{x,y,z\} \in V(D)\setminus{B}$ 
have a common neighbor in $B$.
\end{definition}
Recall that $u$ and $v$ are neighbors if either $uv$ or $vu$ is an
arc.  We can check in $poly(n)$ time (e.g. $O(n^4)$) whether or not a
set $B$ is a bag of $D$ by exhaustively checking all triples in $V(D)
\setminus B$.  Suppose that the $n$ vertices of a digraph can be
partitioned into disjoint sets such that the subgraph induced on each
set has independence number at most $\alpha-1$.  Let $t(\alpha-1,n)$
denote the maximum (over all such possible partitions of the vertices)
total time required by our algorithm (the algorithm {\sc
  Color-Digraph}, which we will define shortly) to color all of the
subgraphs, each with at most $g(\alpha-1)$ colors.  The following claim
follows from this definition.

\begin{claim}\label{cl:runtime}
Suppose $\sum_{i=1}^{\ell} n_i = n$, where $n_i \geq 1$ is an integer.
Then $\sum_{i=1}^{\ell} t(\alpha,n_i) \leq t(\alpha,n)$.
\end{claim}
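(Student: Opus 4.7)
The plan is to treat $t(\alpha,n)$ as realized by a concrete worst-case input and exhibit, for any decomposition $n = n_1 + \cdots + n_\ell$, a single input on $n$ vertices whose total Color-Digraph coloring time equals the left-hand sum $\sum_i t(\alpha,n_i)$. This is essentially a disjoint-union argument exploiting the fact that $t(\alpha,n)$ is defined as a maximum.

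First I would, for each $i \in \{1,\ldots,\ell\}$, invoke the definition of $t(\alpha,n_i)$ as a maximum to fix a digraph $D_i$ on $n_i$ vertices together with a partition $\mathcal{P}_i$ of $V(D_i)$ into parts, each inducing a subdigraph of independence number at most $\alpha$, such that running Color-Digraph separately on every part of $\mathcal{P}_i$ takes total time $t(\alpha,n_i)$. Such a pair $(D_i,\mathcal{P}_i)$ exists precisely because $t(\alpha,n_i)$ is the maximum over all such (digraph, partition) pairs on $n_i$ vertices.

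Next I would form the disjoint union $D := D_1 \sqcup \cdots \sqcup D_\ell$, which has $n_1 + \cdots + n_\ell = n$ vertices, and take $\mathcal{P} := \mathcal{P}_1 \cup \cdots \cup \mathcal{P}_\ell$ (viewed as a collection of parts). Every part of $\mathcal{P}$ is itself a part of some $\mathcal{P}_i$, so it induces the same subdigraph it did inside $D_i$ and hence still has independence number at most $\alpha$. Thus $(D,\mathcal{P})$ is a legitimate input in the class over which $t(\alpha,n)$ takes its maximum. The total time to process it with Color-Digraph—one piece at a time—is exactly $\sum_{i=1}^\ell t(\alpha,n_i)$, and by maximality this sum is at most $t(\alpha,n)$, which is the claim.

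The only subtlety I anticipate is confirming the locality of Color-Digraph: its running time on a given part of $\mathcal{P}$ must depend only on the induced subdigraph on that part, not on the ambient graph from which the part was extracted, so that the time spent on a part of $\mathcal{P}_i$ is the same whether we view the part as sitting inside $D_i$ or inside the larger disjoint union $D$. This is immediate from the algorithm's specification (each invocation receives an induced subdigraph and proceeds from there), but it is the one structural fact without which the disjoint-union trick would not go through, so it is worth stating explicitly.
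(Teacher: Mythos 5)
Your proof is correct, and it is precisely the argument the paper leaves implicit: the paper offers no proof beyond asserting that the claim ``follows from the definition,'' and your disjoint-union construction (take worst-case pairs $(D_i,\mathcal{P}_i)$ attaining $t(\alpha,n_i)$, combine them into one $n$-vertex instance whose combined partition is admissible, and invoke maximality) is exactly the intended superadditivity argument. Your closing remark about locality of the algorithm's running time on each part is the right technical point to flag, and it holds since each invocation of \textsc{Color-Digraph} receives only the induced subdigraph on its part.
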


We now fix an arbitrary $C_3$-free digraph $D$ such that $\alpha(D)\le
\alpha$, and we omit the subscript $D$ from the relevant notation when
the context is clear.

\begin{claim}\label{cl:notbag}
If $S \subset V(D)$ is not a bag of $D$, then:

\begin{enumerate}[label=(\alph*)]
\item \label{notbag1} We can partition $S$ into three disjoint sets,
  $S_1, S_2$ and $S_3$, such that $\alpha(D[S_i]) \leq \alpha-1$ for
  $i \in \{1,2,3\}$.  This procedure takes time $poly(n)$.

\item \label{notbag2} We can color $S$ with $3 \cdot g(\alpha-1)$
  colors in time $t(\alpha-1,|S|)$.
\end{enumerate}
\end{claim}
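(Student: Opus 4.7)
The plan is to unpack the definition of \emph{bag} and read off from its negation a convenient covering of $S$ by non-neighborhoods, from which both parts follow almost immediately.

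First, since $S$ is not a bag, the definition supplies three distinct vertices $x,y,z\in V(D)\setminus S$ having no common neighbor in $S$. Equivalently, every vertex $v\in S$ is non-adjacent in $D$ to at least one of $x,y,z$, so
\[
S \;=\; \bigl(N^o(x)\cap S\bigr)\cup \bigl(N^o(y)\cap S\bigr)\cup \bigl(N^o(z)\cap S\bigr).
\]
I turn this cover into a partition by setting $S_1:=N^o(x)\cap S$, $S_2:=(N^o(y)\cap S)\setminus S_1$, and $S_3:=(N^o(z)\cap S)\setminus (S_1\cup S_2)$. For part (a), the bound on $\alpha(D[S_i])$ comes from the standard observation used throughout the paper: if $S_i\subseteq N^o(w)$ for some vertex $w$, then any independent set in $D[S_i]$ together with $w$ is still independent in $D$, so $\alpha(D[S_i])\leq \alpha(D)-1\leq \alpha-1$. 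Finding the witness triple $\{x,y,z\}$ is done by the same brute force test used to recognize bags in the first place, namely enumerating triples in $V(D)\setminus S$ and checking, for each, whether any vertex of $S$ is a common neighbor; this is clearly $poly(n)$.

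For part (b), I colour each $D[S_i]$ recursively by invoking the algorithm {\sc Color-Digraph} on the smaller independence-number instance, which by the inductive hypothesis uses at most $g(\alpha-1)$ colours on $D[S_i]$ in time $t(\alpha-1,|S_i|)$. Using three disjoint colour palettes for $S_1$, $S_2$, $S_3$ yields a proper acyclic $3\cdot g(\alpha-1)$-colouring of $D[S]$ (there are no arcs to worry about between the parts because each part is coloured with its own fresh set of colours and we only need the colour classes to be acyclic within themselves). The running time for the three recursive calls sums to at most $t(\alpha-1,|S_1|)+t(\alpha-1,|S_2|)+t(\alpha-1,|S_3|)\leq t(\alpha-1,|S|)$ by Claim \ref{cl:runtime}, and the $poly(n)$ cost of locating $\{x,y,z\}$ is absorbed into this bound (the function $t$ already carries polynomial overhead from the recursion).

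There is essentially no serious obstacle here; the only thing to be careful about is that the three colour palettes are disjoint, so that the resulting colour classes on $S$ remain acyclic (indeed, each class lies entirely inside a single $S_i$, where it was built to be acyclic by the recursive call). Once that is noted, both parts follow directly from the definition of a bag together with Claim \ref{cl:runtime}.
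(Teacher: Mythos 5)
Your proof is correct and follows essentially the same route as the paper: negate the bag definition to get a triple $x,y,z\in V(D)\setminus S$ with no common neighbor in $S$, cover $S$ by $N^o(x),N^o(y),N^o(z)$ (each of independence number at most $\alpha-1$), and color the parts recursively with disjoint palettes, bounding the time via the definition of $t$ and Claim~\ref{cl:runtime}. The only cosmetic difference is that you explicitly disjointify the cover into a partition, which the paper leaves implicit.
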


\begin{cproof}
If $S$ is not a bag of $D$, then we can, in time $poly(n)$, find a
triple $\{x,y,z\} \in V(D) \setminus S$ such that every $v\in S$ is
not incident to at least one of $x,y$ or $z$.  Thus, each vertex $v
\in S$ belongs to either $N^o(x)\cap S, N^o(y)\cap S$ or $N^o(z)\cap
S$.  Each of these sets has independence number at most $\alpha-1$.
The total time for this procedure is $poly(n)$.  The second assertion
follows from \ref{notbag1} and from the definition of the function
$t(\alpha,n)$.
\end{cproof}

\begin{definition}
A bag $B \subseteq V(D)$ is {\em poor} if for every vertex $v \in B$,
either $N^-(v)\cap B$ or $N^+(v)\cap B$ is not a bag of $D$.
\end{definition}
We can check in $poly(n)$ time (e.g. $O(n^5)$) if a bag $B$ is poor by
testing whether or not $N^-(v)\cap B$ and $N^+(v)\cap B$ are bags for
every $v\in B$.

\begin{claim}\label{cl:rho}
If $B \subseteq V(D)$ is a poor bag, then we can color $B$ with
$8 \alpha \cdot g(\alpha-1)$ colors in time $poly(n) + t(\alpha-1, |B|)$.
\end{claim}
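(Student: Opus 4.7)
The plan is to partition $B$ by the poor-bag property and color each half by a single application of Proposition~\ref{prop:qua-dom}. Set
\[B^+ := \{v \in B : N^+(v) \cap B \text{ is not a bag of } D\}, \qquad B^- := B \setminus B^+.\]
The poor-bag hypothesis guarantees that $N^-(v) \cap B$ is not a bag for every $v \in B^-$, and both conditions can be tested in $poly(n)$ time. I will bound $\chi(B^+), \chi(B^-) \leq 4\alpha \cdot g(\alpha-1)$, which combined give $\chi(B) \leq 8\alpha \cdot g(\alpha-1)$.

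To bound $\chi(B^+)$, apply Proposition~\ref{prop:qua-dom} to $D[B^+]$ to obtain a stable set $Y^+ \subseteq B^+$ with $|Y^+| \leq \alpha$ and $B^+ \subseteq Y^+ \cup (N^o(Y^+) \cap B^+) \cup (N^+(Y^+) \cap B^+)$. For each $y \in Y^+$, the set $N^o(y) \cap B^+$ is $C_3$-free with independence number at most $\alpha-1$ (enlarging any independent subset by $y$ would contradict $\alpha(D) \leq \alpha$), so the induction hypothesis colors it with $g(\alpha-1)$ colors. Using disjoint palettes of size $g(\alpha-1)$ across the $\alpha$ choices of $y$, I color $N^o(Y^+) \cap B^+$ with $\alpha \cdot g(\alpha-1)$ colors in total. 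This coloring already covers $Y^+$ when $|Y^+| \geq 2$, since stability of $Y^+$ places each $y \in Y^+$ in $N^o(y')$ for every other $y' \in Y^+$; if $|Y^+| = 1$, the single vertex can simply be appended to any existing color class, which remains acyclic because it is non-adjacent to everything in $N^o(Y^+)$.

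For each $y \in Y^+$, the membership $y \in B^+$ forces $N^+(y) \cap B$ to be a non-bag, so Claim~\ref{cl:notbag} colors $N^+(y) \cap B$ with $3 g(\alpha-1)$ colors in time $t(\alpha-1, |N^+(y) \cap B|)$. Using a fresh palette per $y \in Y^+$, I obtain a $3\alpha \cdot g(\alpha-1)$-coloring of $N^+(Y^+) \cap B^+$, giving $\chi(B^+) \leq 4\alpha \cdot g(\alpha-1)$. The symmetric argument applied to the reverse of $D[B^-]$ (with $N^-$ playing the role of $N^+$) yields $\chi(B^-) \leq 4\alpha \cdot g(\alpha-1)$. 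The total running time aggregates via Claim~\ref{cl:runtime} once $B^\pm$ is disjointly partitioned into the pieces on which the induction and Claim~\ref{cl:notbag} are invoked, producing $poly(n) + t(\alpha-1, |B|)$.

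The main obstacle is holding the color count at exactly $8\alpha \cdot g(\alpha-1)$ rather than $8\alpha \cdot g(\alpha-1) + O(1)$, since a naive decomposition would incur additive losses to account for the stable dominating sets $Y^\pm$ themselves. The absorption argument above avoids this loss by noting that each $Y^\pm$ is stable and, when it contains at least two elements, is already covered by its own $N^o$-cover, so no additional colors are needed for these vertices.
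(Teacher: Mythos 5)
Your proof is correct and takes essentially the same route as the paper: split the poor bag according to which of $N^+(v)\cap B$, $N^-(v)\cap B$ fails to be a bag, apply Proposition~\ref{prop:qua-dom} to each half (once in the reversed orientation), decompose each half into at most $4\alpha$ pieces of independence number at most $\alpha-1$ via Claim~\ref{cl:notbag} and the sets $N^o(y)$, and absorb the dominating vertices into existing color classes. The only cosmetic difference is in that last absorption step: the paper simply gives each $v\in Y_R\cup Y_L$ a color from the palette reserved for $N^o(v)$, whereas you invoke stability of $Y^{\pm}$ (which indeed holds by the construction in Proposition~\ref{prop:qua-dom}, though it is not stated there).
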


\begin{cproof}
Since $B$ is poor, then for each $v \in B$, either $N^-(v)\cap B$ or
$N^+(v)\cap B$ is not a bag of $D$.  Then we can partition $B$ into
two sets, $L$ and $R$, where $N^-(v)\cap B$ is not a bag for every $v\in L$,
and $N^+(v)\cap B$ is not a bag for every $v\in R$.

Applying Proposition \ref{prop:qua-dom} to $R$, we can find a set $Y_R
\subseteq R$ (respectively $Y_L \subseteq L$) such that $|Y_R| \leq \alpha$ and $R \subseteq Y_R \cup
N^+(Y_R) \cup N^o(Y_R)$.  And so:
$$R=\bigcup_{v\in Y_R}\Big(\big(N^+(v)\cup N^o(v)\cup \{v\}\big)\cap
R\Big).$$ For $v \in Y_R$, note that $N^+(v)\cap R$ is not a bag, and so by
\ref{notbag1} from Claim \ref{cl:notbag}, we can partition $N^+(v)
\cap R$ into three sets, each with independence number at most
$\alpha-1$.  Additionally, we have the set $N^o(v) \cap R$, which also
has independence number at most $\alpha-1$.
Overall, we can partition $R\setminus{Y_R}$ into $4
\alpha$ sets, each with independence number $\alpha-1$.  The same
argument can be applied to $Y_L$ to obtain $8 \alpha$ disjoint sets.
Therefore, in time $poly(n)$, we can partition $B \setminus{Y_R \cup
  Y_L}$, and in time $t(\alpha-1, |B|)$ we can color $B\setminus{\{Y_r
  \cup Y_L\}}$ with $8 \alpha \cdot g(\alpha-1)$ colors.  We can then
color each $v \in Y_R$ (respectively, $v \in Y_L$) with an arbitrary
color used to color the set $N^o(v) \cap R$ (respectively, $N^o(v)
\cap L$).
\end{cproof}

In general, we do not know how to color a bag efficiently, and a bag
may be very large (e.g. $V(D)$ is a bag).  Our aim is therefore to find poor
bags, since these can be colored using Claim \ref{cl:rho}.  The
first step of our algorithm is to find a chain of poor bags.

\begin{definition}
A sequence of pairwise disjoint bags $B_1, \dots ,B_t$ forms a \emph{chain
  of bags} if $B_i\to
B_{i+1}$ for every $i \in [1,t).$ \end{definition} 

Recall that $B_i\to B_{i+1}$ means there is no arc from $B_{i+1}$ to
$B_{i}$.  Moreover, if each $B_i$ is a poor bag, then this sequence
is a {\em chain of poor bags}.  Given a chain of bags $C = \{B_1,B_2,
\dots, B_t\}$ for $D$, we say that $v \in C$ if $v \in B_i$ for some
$i \in [1,t]$.  We can partition the vertices in $V(D) \setminus{C}$
into sets $Z_0, \dots ,Z_t$, which we call \emph{zones}, as follows.
For every $v\in V(D) \setminus{C}$, let $i$ be the largest index such
that $v$ is seen by at least one vertex in $B_i$.  Then vertex $v$ is
assigned to zone $Z_i$.  Otherwise, we assign $v$ to zone $Z_0$. This
partition is unique and can be done in time $poly(n)$.  As in the case
of the bags and zones used in Section \ref{sec:H1K}, these bags and
zones exhibit useful properties.  The proofs we present here are
similar, but much simpler.

\begin{claim}\label{cl:c3-wit}
Let $C = \{B_1, \dots, B_t\}$ be a chain of bags, and let $Z_0, Z_1, \dots,
Z_t$ be a partition of the vertices in $V(D) \setminus{C}$.
For every $i$, the following properties hold:
\begin{enumerate}[label=(\alph*)]
\item \label{en:c3.1} $B_i\to B_{i+r}$ for every $r\ge 1$, 
\item \label{en:c3.2} $Z_i\to B_{i+r}$ for every $r\ge 1$, 
\item \label{en:c3.3} $B_i\to Z_{i+r}$ for every $r\ge 2$,
\item \label{en:c3.4} $Z_i\to Z_{i+r}$ for every $r\ge 3$.
\end{enumerate}
\end{claim}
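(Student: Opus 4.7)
My plan is to prove each of the four parts by contradiction, in every case starting with a hypothetical backward arc $v\to u$ and closing a directed triangle using the new bag definition from Section~\ref{sec:triangle-free}. The uniform mechanism is to pick an intermediate bag $B_j$, apply its dominating-triples property to a triple containing $u$, $v$, and, when needed, an auxiliary witness, and observe that the chain condition $B_s\to B_{s+1}$ together with the previously established parts of the claim force the arcs between the resulting common neighbor and $u,v$ into exactly the directions that produce a $C_3$ together with $v\to u$, contradicting that $D$ is $C_3$-free.

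Part (b) is the one case needing no bag argument: by the definition of zones, if $u\in Z_i$ then $u$ is not seen by any vertex of $B_j$ for $j>i$, so for every $r\ge 1$ no arc goes from $B_{i+r}$ to $u$. Part (a) I would prove by induction on $r$, with base case $r=1$ being the chain definition itself. For $r\ge 2$, given a hypothetical arc $v\to u$ with $v\in B_{i+r}$ and $u\in B_i$, I apply the bag property of $B_{i+1}$ to a triple $\{u,v,w\}$ (any third vertex outside $B_{i+1}$ distinct from $u,v$) and take the guaranteed common neighbor $x\in B_{i+1}$. The chain condition $B_i\to B_{i+1}$ rules out $x\to u$, so $u\to x$, and the inductive hypothesis $B_{i+1}\to B_{i+r}$ rules out $v\to x$, so $x\to v$; then $u\to x\to v\to u$ is the desired $C_3$.

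For (c) and (d) the endpoint $v$ sits in a zone rather than a bag, so I would first pick a witness $y\in B_{i+r}$ with $y\to v$, which exists because $i+r\ge 2$ in (c) and $i+r\ge 3$ in (d); then I apply the bag property of $B_{i+r-1}$ to the triple $\{u,v,y\}$, producing a common neighbor $w\in B_{i+r-1}$. The chain condition forces $w\to y$, and the arc between $w$ and $u$ is forced to be $u\to w$ by using part (a) in case (c) (a strictly shorter jump between bags) and part (b) in case (d) (with $r-1\ge 1$, so that (b) applies to $Z_i$ and $B_{i+r-1}$). The arc between $w$ and $v$ is the only one not a priori constrained, but the decisive feature is that either direction closes a triangle: if $w\to v$ then $u\to w\to v\to u$ is a $C_3$, while if $v\to w$ then $v\to w\to y\to v$ is a $C_3$.

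The main obstacle, and the crux of the whole argument, is precisely this final case split on the unforced arc between $w$ and $v$: no chain or zone axiom constrains its direction, and the proof works only because the two possible directions yield two different short cycle certificates, one passing through the earlier endpoint $u$ and the other through the witness $y$. A minor technicality is producing the filler third vertex needed in part (a) to activate the bag definition, but this is automatic except in pathologically small digraphs, where the claim is trivially true.
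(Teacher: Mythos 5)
Your proof is correct and takes essentially the same route as the paper: each backward arc is killed by taking a common neighbor in an intermediate bag (with a filler vertex for the triple, a technicality the paper also glosses over), forcing the other arcs via the chain condition, the zone definition, and the previously proved parts, and closing a $C_3$. The only cosmetic difference is that in (c) and (d) you use the intermediate bag $B_{i+r-1}$ together with a witness $y\in B_{i+r}$ and a case split on the one unforced arc, whereas the paper uses $B_{i+1}$ and in (d) avoids any case split by invoking (c); your variant incidentally yields (d) already for $r\ge 2$, slightly more than is needed.
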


\begin{cproof}
Property \ref{en:c3.1} holds for $r = 1$ by definition of a chain of
bags.  Suppose that \ref{en:c3.1} holds for $r-1 > 1$, and suppose
that there is an arc $uv$ with $u\in B_{i+r}$ and $v\in B_i$. Since
$B_{i+1}$ is a bag, there is $x\in B_{i+1}$ such that $x$ is a common
neighbor of $u$ and $v$. Then by induction hypothesis, $vx,xu$ are
arcs, and so $vxu$ is a copy of $C_3$, a contradiction. Hence
\ref{en:c3.1} holds for $r$.

Property \ref{en:c3.2} holds for all $r\ge 1$ by the partitioning
criterion of vertices into zones.

To prove property \ref{en:c3.3}, suppose that there is an arc $zv$
with $z\in Z_{i+r}$ and $v\in B_i$ for some $r\ge 2$.  Then there is
$u\in B_{i+r}$ such that $uz$ is an arc by the partitioning criterion
of vertices into zones.  Since $B_{i+1}$ is a bag, there is $x\in
B_{i+1}$ such that $x$ is a common neighbor of $u,v,z$.  By property
\ref{en:c3.1}, $vx$ and $xu$ are arcs. If $xz$ is an arc, then $vxz$
is a copy of $C_3$. Otherwise, $zx$ is an arc, and so $xuz$ is a copy
of $C_3$. Either way, we reach the contradiction, and so \ref{en:c3.3}
holds for every $r\ge 2$.

To prove property \ref{en:c3.4}, suppose that there is an arc $uv$
with $u\in Z_{i+r}$ and $v\in Z_i$ for some $r\ge 3$. Since $B_{i+1}$
is a bag, there is $x\in B_{i+1}$ such that $x$ is a common neighbor
of both $u$ and $v$.  By property \ref{en:c3.2}, $vx$ is an arc, and by
property \ref{en:c3.3}, $xu$ is an arc. Hence $vxu$ is a copy of
$C_3$, a contradiction. Hence \ref{en:c3.4} holds for every $r\ge 3$.
\end{cproof}

We now show how to find a chain of poor bags, which can be colored
using Claim \ref{cl:rho}.

\vspace{3mm}

\begin{mdframed}

\vspace{2mm}
{\sc Find-Chain}$(D,B)$
\begin{enumerate}
\item {If there is $v \in B$ such that both $N^+(v)\cap B$ and
  $N^-(v)\cap B$ are bags of $D$, then:

~~~~ Return \big({\sc Find-Chain}$(D,N^-(v)\cap B)$,~$v$, {\sc Find-Chain}$(D,N^+(v)\cap B)$ \big).

\item Otherwise, return $B$.
}
\end{enumerate}

\end{mdframed}

\vspace{3mm}

The routine {\sc Find-Chain}$(D,B)$ returns
$B_1,v_1,B_2,\dots,v_{t-1},B_t$ (i.e., a sequence of sets $B_i$
alternating with vertices $v_i$).  We say that the sequence $B_1, B_2,
\dots, B_t$ is the chain of poor bags output by the procedure {\sc
  Find-Chain}$(D,B)$.  Later on, we will use the vertices $v_i$ in the
output sequence to facilitate the coloring of vertices outside the
chain.  Observe that if $B$ is a poor bag or if $B$ is not a bag, then
{\sc Find-Chain}$(D,B)$ returns a single set, namely $B$.

\begin{claim} \label{cl:findchain}
If bag $B \subseteq V(D)$ is not poor, then {\sc Find-Chain}$(D,B)$
returns a chain of poor bags $B_1, \dots ,B_t$ for some $t \ge 2$ in time
$poly(n)$.
\end{claim}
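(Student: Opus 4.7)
The plan is to proceed by induction on $|B|$, treating the procedure as building a chain by splitting the input bag around a chosen vertex $v$. Since $B$ is a bag that is not poor, the negation of the definition of poorness directly yields a vertex $v \in B$ such that both $N^-(v) \cap B$ and $N^+(v) \cap B$ are bags of $D$. Hence the condition in step~1 of {\sc Find-Chain} fires, so each recursive call takes a bag as input, which lets me apply the induction hypothesis to each call. This also immediately gives $t \ge 2$, since each of the two sub-calls contributes at least one bag to the output.

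Next I would verify that every $C_i$ in the output is in fact a poor bag. By induction, each sub-call on a smaller bag either returns a chain of poor bags (if the input is not poor) or returns a single set in step~2. In the latter case, step~2 fires precisely when the input is a bag for which no vertex satisfies the step~1 condition; by the definition of poorness, this means the input is poor. Pairwise disjointness of the $C_i$ is automatic, because $N^-(v) \cap B$ and $N^+(v) \cap B$ are disjoint and neither contains $v$.

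The main obstacle will be the arc property $C_i \to C_{i+1}$ at the seam between the two sub-recursions; within each sub-recursion the property holds by induction. Let $C_p$ be the last bag returned by the $N^-$-recursion and $C_{p+1}$ the first bag returned by the $N^+$-recursion. Suppose, toward a contradiction, there were an arc $uw$ with $u \in C_{p+1} \subseteq N^+(v) \cap B$ and $w \in C_p \subseteq N^-(v) \cap B$. Then $w \to v$ (since $w \in N^-(v)$), $v \to u$ (since $u \in N^+(v)$), and $u \to w$ would together form a copy of $C_3$, contradicting the $C_3$-freeness of $D$. This is precisely where the $C_3$-free assumption is leveraged to rule out backward arcs across the split.

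Finally, for the running time: identifying a suitable $v$, or certifying that none exists at a given call, requires at most $|B|$ iterations, each consisting of a constant number of bag tests of cost $poly(n)$. The recursion tree has at most $|B| \le n$ leaves, since each internal call extracts a distinct separator vertex $v$ that disappears from both sub-instances. Therefore the total runtime is bounded by $poly(n)$, and I do not anticipate any subtlety here beyond the standard bookkeeping.
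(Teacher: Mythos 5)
Your proof is correct and follows essentially the same route as the paper: each set returned by {\sc Find-Chain} is a poor bag by the structure of Steps 1--2, the bags are pairwise disjoint, and any backward arc across a split around a separator vertex $v$ would create a copy of $C_3$ with $v$, with the recursion giving the $poly(n)$ bound. Your packaging as an induction on the recursion merely makes explicit the structural fact the paper states directly (that consecutive bags lie in $N^-(v_i)$ and $N^+(v_i)$ for the separator $v_i$ between them).
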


\begin{cproof}
Let $B_1, \dots, B_t$ be the chain of poor bags output by {\sc
  Find-Chain}$(D,B)$.  The bags in this chain are pairwise disjoint.
From Step 1 of {\sc{Find-Chain}}, it follows that each $B_i$ is a
bag. Furthermore, each $B_i$ must be poor; otherwise, Step 1 would be
applied to $B_i$ to return poor bags inside $B_i$.  Observe that for
every pair of consecutive bags $B_i, B_{i+1}$ in the chain,
$B_{i+1}\subseteq N^+(v_i)$ and $B_i\subseteq N^-(v_i)$. If there is
an arc $xy$ with $x\in B_{i+1}$ and $y\in B_{i}$, then $v_ixy$ is a
copy of $C_3$, a contradiction. Hence $B_1,\dots,B_t$ is a chain of
poor bags by definition.  The procedure {\sc Find-Chain}$(D,B)$ runs
in time $poly(n)$.
\end{cproof}

\begin{claim}\label{cl:c3-zone}
If {\sc{Find-Chain}}$(D, B)$ returns a chain of $t$ poor bags, then we
can color $B$ with $8\alpha \cdot g(\alpha-1) + (t-1)\cdot
g(\alpha-1)$ colors in time $poly(n) + t(\alpha-1, |B|)$.
\end{claim}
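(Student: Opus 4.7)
The plan is to induct on $t$, or equivalently on the depth of recursion in {\sc Find-Chain}. For the base case $t=1$, {\sc Find-Chain} must have returned at Step~2, so $B$ itself is a poor bag and Claim~\ref{cl:rho} directly gives a coloring with $8\alpha\cdot g(\alpha-1)$ colors within the stated time bound, matching the target at $t=1$.

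For the inductive step $t\ge 2$, Step~1 of {\sc Find-Chain} must have fired, producing a pivot $v_1$ that splits $B$ into the two bags $B':=N^-(v_1)\cap B$ and $B'':=N^+(v_1)\cap B$, on which {\sc Find-Chain} returns chains of lengths $t_1,t_2\ge 1$ with $t_1+t_2=t$. I would combine the inductive colorings of $B'$ and $B''$ (using $c_i:=8\alpha\cdot g(\alpha-1)+(t_i-1)g(\alpha-1)$ colors) into a single coloring of $B'\cup B''$ on a shared palette of size $\max(c_1,c_2)$, then absorb $v_1$ into some class of $B'$ for free, and finally color the leftover set $N^o(v_1)\cap B$ with $g(\alpha-1)$ fresh colors. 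This last step is valid because $N^o(v_1)\cap B$ has independence number at most $\alpha-1$, so the outer induction hypothesis of Theorem~\ref{thm:c3-res} applies.

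The only real content I anticipate is the claim that the shared palette works, i.e., that $B'\to B''$ in $D$. This is where $C_3$-freeness bites: an arc $xy$ with $x\in B''$ and $y\in B'$ would, together with $yv_1$ and $v_1x$, close up to a copy of $C_3$. Once $B'\to B''$ is established, unions of color classes on the two sides remain acyclic, and the arithmetic closes: since $t_1,t_2\ge 1$ we have $\max(t_1,t_2)\le t-1$, so $\max(c_1,c_2)\le 8\alpha\cdot g(\alpha-1)+(t-2)g(\alpha-1)$, and adding $g(\alpha-1)$ for $N^o(v_1)\cap B$ meets the target exactly. The vertex $v_1$ is free to throw into any class of $B'$ because every arc between $v_1$ and $B'$ points into $v_1$, so no class acquires a new directed cycle. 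The running time aggregates via Claim~\ref{cl:runtime} applied to the partition $|B|=|B'|+|B''|+|N^o(v_1)\cap B|+1$, with $poly(n)$ overhead per recursive level summing to $poly(n)$ overall.
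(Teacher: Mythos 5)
Your proof is correct and reaches the stated bound, but it is packaged differently from the paper's argument. The paper works flatly: it colors the entire chain $C=\{B_1,\dots,B_t\}$ on a single palette of $8\alpha\cdot g(\alpha-1)$ colors by invoking property \ref{en:c3.1} from Claim~\ref{cl:c3-wit} (no arcs from $B_j$ to $B_i$ for $i<j$), notes that every vertex of $B\setminus C$ is a pivot $v_i$ or lies in $N^o(v_i)$ for some pivot, gives each $N^o(v_i)$ its own fresh palette of $g(\alpha-1)$ colors, and places $v_i$ into the palette of $N^o(v_i)$ (legal since $v_i$ is non-adjacent to that set). You instead induct along the recursion tree of {\sc Find-Chain}, and at each node you only need the root-level separation $N^-(v_1)\cap B\to N^+(v_1)\cap B$ (the same $C_3$ argument that proves Claim~\ref{cl:findchain}) to merge the two inductive palettes; you never use Claim~\ref{cl:c3-wit} at all. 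The arithmetic $\max(t_1,t_2)\le t-1$ plus one fresh palette per pivot reproduces $(t-1)g(\alpha-1)$ exactly, and the time bound aggregates correctly via Claim~\ref{cl:runtime} because the leaf bags and the sets $N^o(\cdot)$ discarded at the internal nodes are pairwise disjoint subsets of $B$. So this is a legitimate alternative: slightly more self-contained for this claim, at the cost of re-deriving locally what Claim~\ref{cl:c3-wit} provides globally.

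One spot to tighten: after merging, a color class is a subset of $B'\cup B''$, so when you absorb $v_1$ "into a class of $B'$" it is not enough to observe that all arcs between $v_1$ and $B'$ point into $v_1$ --- the same class also contains vertices of $B''$, all of which $v_1$ sees. The conclusion still holds: any directed cycle through $v_1$ must leave $v_1$ into a class vertex of $B''$ (simplicity of $D$ rules out arcs from $B''$ into $v_1$) and re-enter $v_1$ from a class vertex of $B'$, hence the cycle would contain an arc from $B''$ to $B'$, contradicting $B'\to B''$. Alternatively, do as the paper does and put $v_1$ into any class of the fresh palette used for $N^o(v_1)\cap B$, which sidesteps the issue entirely.
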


\begin{cproof}
Suppose that {\sc{Find-Chain}}$(D,B)$ returns $B_1,v_1,B_2,\dots,
v_{t-1},B_t$ and that $C= \{B_1, \dots, B_t\}$ is the chain of poor
bags output by the procedure.  Since each $B_i$ is a poor bag, it can be colored using
Claim \ref{cl:rho}.  By Claim \ref{cl:c3-wit}, $B_i\to B_j$ for every
$i<j$, and so we can color the vertices in $C$ using $8 \alpha \cdot
g(\alpha-1)$ colors in time $poly(n) + t(\alpha-1, |C|)$ time.

Observe that each $v \in B \setminus{C}$ is either (i) some $v_i$ in
the output sequence returned by {\sc Find-Chain}$(D,B)$, or (ii)
belongs to $N^o(v_i)$ for some $v_i$ in this output sequence.  For any
$v \in V(D)$, the set $N^o(v)$ has independence number $\alpha-1$.
Therefore, the vertices in $\bigcup_{i=1}^{t-1} N^o(v_i)$ can be colored
with $(t-1) \cdot g(\alpha-1)$ colors.  Note that $v_i$ can be colored
with an arbitrary color from the color palette used for $N^o(v_i)$.
This coloring can be found in time $poly(n) + t(\alpha-1, |B
\setminus{C}|)$.
\end{cproof}

\begin{corollary}\label{cor:chain-col}
Either {\sc Find-Chain}$(D,B)$ returns a chain of $t$ poor bags, or
$B$ can be colored using $8 \alpha \cdot g(\alpha-1) + (t-2) \cdot
g(\alpha-1)$ colors.
\end{corollary}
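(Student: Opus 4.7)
The plan is to recognize this corollary as a direct rephrasing of Claim~\ref{cl:c3-zone}. I would run {\sc Find-Chain}$(D,B)$ and let $t'$ denote the length of the chain of poor bags it returns; by Claim~\ref{cl:findchain} this is genuinely a chain of poor bags whenever $B$ is a bag, and when $B$ is itself already poor the procedure simply returns the single-element chain $(B)$. The dichotomy is then obtained by comparing $t'$ to the parameter $t$.

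If $t' \ge t$, the first alternative holds immediately, since a chain of $t'$ poor bags contains a chain of $t$ poor bags (in fact the same chain suffices). Otherwise $t' \le t-1$, and I would invoke Claim~\ref{cl:c3-zone} on the chain of length $t'$ produced by {\sc Find-Chain} to color $B$ using
$$
8\alpha \cdot g(\alpha-1) + (t'-1)\cdot g(\alpha-1) \;\le\; 8\alpha \cdot g(\alpha-1) + (t-2)\cdot g(\alpha-1)
$$
colors, establishing the second alternative.

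There is essentially no obstacle: the argument is a single case split followed by one substitution of the inequality $t'-1 \le t-2$ into the bound of Claim~\ref{cl:c3-zone}. The only point worth a sanity check is the corner case $t' = 1$, in which {\sc Find-Chain} has returned a single poor bag; here the estimate from Claim~\ref{cl:c3-zone} collapses to that of Claim~\ref{cl:rho}, which still yields the stated bound of $8\alpha\cdot g(\alpha-1)$ colors, so the corollary is consistent at the boundary.
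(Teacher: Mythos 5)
Your derivation is the intended one---the paper states the corollary without proof, and reading it off Claim~\ref{cl:c3-zone} via the case split on the output length $t'$ and the substitution $t'-1\le t-2$ is exactly the point; your handling of the corner case $t'=1$ (a single poor bag, bound collapsing to Claim~\ref{cl:rho}) is also fine. There is, however, one case your dichotomy does not cover: $B$ may fail to be a bag altogether. Claim~\ref{cl:findchain} only guarantees that {\sc Find-Chain}$(D,B)$ outputs a chain of poor bags when $B$ is a bag; if $B$ is not a bag, the procedure returns the single set $B$, which is not a poor bag, so the first alternative fails and you cannot invoke Claim~\ref{cl:c3-zone} either, since its hypothesis is that the output is a chain of poor bags. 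This case is not vacuous: the corollary is applied in Claim~\ref{cl:main} to the zones $Z_i$, and the paper explicitly notes that a zone is either a poor bag or not a bag at all. The repair is immediate: when $B$ is not a bag, Claim~\ref{cl:notbag} colors $B$ with $3\cdot g(\alpha-1)$ colors, which is at most $8\alpha\cdot g(\alpha-1)+(t-2)\cdot g(\alpha-1)$ for every $t\ge 2$, so the second alternative still holds. With that extra case added, your argument is complete and coincides with the paper's implicit proof.
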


We now have the tools to outline our main coloring algorithm.

\vspace{2mm}
\noindent
\fbox{\parbox{15cm}{
{\sc Color-Digraph}$(D)$

\vspace{2mm}
If $D$ is acyclic, color $D$ with one color and terminate.

\vspace{2mm}
Otherwise: 
\begin{enumerate}

\item Run {\sc Find-Chain}$(D,V(D))$ and 
let $C := \{B_1, \dots, B_t\}$ denote the chain of poor bags that is output.

\item Assign each vertex in $V(D) \setminus{C}$ to a zone $Z_i$ for $i
  \in [0,t]$.  

\item {While {\sc Find-Chain}$(D,Z_i)$ returns a chain of poor bags $B_1', B_2', \dots, B_k'$ for $k \geq 3$:

\begin{enumerate}

\item Update chain: $C := \{B_1, \dots, B_{i-2}, B_1', B_2', \dots,
  B_k', B_{i+1}, \dots, B_t\}.$

\item Re-assign each vertex in $V(D) \setminus{C}$ to a zone.

\end{enumerate}
}

\item Color all vertices in the chain $C$ with $8\alpha \cdot g(\alpha-1)$ colors.

\item Color all vertices in the zones of $C$ with $3(8 \alpha + 1)\cdot g(\alpha-1)$ colors.

\end{enumerate}
}}

\vspace{3mm}

\begin{claim}\label{cl:main}
{\sc Color-Digraph}$(D)$ colors $D$ with at most $35\alpha \cdot
g(\alpha-1)$ colors.
\end{claim}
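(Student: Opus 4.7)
The plan is to bound the chromatic contributions of the chain $C$ and of the union of the zones separately, and then sum them using disjoint palettes. The first thing to verify is the loop invariant for step~3: after every iteration, $C$ is still a chain of poor bags of $D$. When {\sc Find-Chain}$(D, Z_i)$ outputs $B_1', \ldots, B_k'$ with $k \ge 3$ and the algorithm splices this sequence in place of $B_{i-1}, B_i$, each $B_j'$ is a poor bag by Claim~\ref{cl:findchain}, the internal arrows $B_j' \to B_{j+1}'$ hold by construction, and the boundary arrows $B_{i-2} \to B_1'$ and $B_k' \to B_{i+1}$ follow from Claim~\ref{cl:c3-wit}(c) and (b) applied to $B_1', B_k' \subseteq Z_i$. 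Note that $B_{i-1}$ really must be dropped, since $B_{i-1} \to Z_i$ would require $r=1$ in Claim~\ref{cl:c3-wit}(c), which is not guaranteed. Termination holds because the number of bags in $C$ strictly increases at each iteration and is bounded by $n$.

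Second, I would color the final chain $C = \{B_1, \ldots, B_t\}$. Each poor bag $B_i$ takes at most $8\alpha \cdot g(\alpha-1)$ colors by Claim~\ref{cl:rho}, and Claim~\ref{cl:c3-wit}(a) gives $B_i \to B_j$ whenever $i < j$, so reusing a single palette of $8\alpha \cdot g(\alpha-1)$ colors across all $B_i$ yields a valid acyclic partition of $C$. Third, for each zone $Z_i$, termination of the while loop means {\sc Find-Chain}$(D, Z_i)$ returns at most $2$ poor bags, so Corollary~\ref{cor:chain-col} (with $t=3$) colors $Z_i$ with at most $(8\alpha+1) \cdot g(\alpha-1)$ colors. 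Claim~\ref{cl:c3-wit}(d) gives $Z_i \to Z_{i+r}$ for $r \ge 3$, so grouping the zones by index modulo~$3$ produces three subfamilies, each having no backward arc between its members; a single palette of $(8\alpha+1) \cdot g(\alpha-1)$ colors then suffices within each subfamily, and three disjoint palettes color $\bigcup_i Z_i$ with $3(8\alpha+1) \cdot g(\alpha-1)$ colors.

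Combining with disjoint palettes between $C$ and the zones,
$$\chi(D) \le 8\alpha \cdot g(\alpha-1) + 3(8\alpha+1) \cdot g(\alpha-1) = (32\alpha+3) \cdot g(\alpha-1) \le 35\alpha \cdot g(\alpha-1),$$
using $\alpha \ge 1$. The main technical point I expect is the loop invariant: showing that splicing a chain extracted from a zone into the main chain preserves both the ``chain'' structure and the ``poor bag'' property on every member. This is precisely where the zone--bag arrow properties from Claim~\ref{cl:c3-wit} are indispensable; without $B_{i-2} \to Z_i$ and $Z_i \to B_{i+1}$, the update step would fail to produce a valid chain.
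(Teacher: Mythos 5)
Your proof is correct and follows essentially the same route as the paper: it verifies that the splicing step preserves a chain of poor bags via properties \ref{en:c3.2} and \ref{en:c3.3} of Claim \ref{cl:c3-wit}, colors the chain with one palette of $8\alpha\cdot g(\alpha-1)$ colors using Claim \ref{cl:rho} and property \ref{en:c3.1}, and colors the zones via Corollary \ref{cor:chain-col} together with the modulo-3 grouping from property \ref{en:c3.4}, giving $(32\alpha+3)\cdot g(\alpha-1)\le 35\alpha\cdot g(\alpha-1)$. Your added remarks on why $B_{i-1}$ must be dropped and on termination of the while loop are correct refinements of the same argument.
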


\begin{cproof}
If $V(D)$ is a poor bag, then we can apply Claim \ref{cl:rho}.  Hence
we may suppose that $V(D)$ is not a poor bag.  

Note that the updated chain $C$ resulting from Step 3 (a) is still a
chain of poor bags, due to properties \ref{en:c3.2} and \ref{en:c3.3}
from Claim \ref{cl:c3-wit}.  After Step 3 finishes, the chain $C$ is {\em
  maximal} in that the procedure {\sc Find-Chain}$(D,Z_i)$ will not
find a chain of three poor bags in any zone $Z_i$.  Using Corollary
\ref{cor:chain-col}, we can therefore color each zone using $(8 \alpha
+ 1)\cdot g(\alpha -1)$ colors.  Applying property \ref{en:c3.4} from
Claim \ref{cl:c3-wit}, we can use at most $(24 \alpha + 3) \cdot
g(\alpha-1)$ colors to color all vertices in $V(D)\setminus{C}$.  Each
bag $B_i$ in the chain $C$ is a poor bag, so we can color $B_i$ with
$8\alpha\cdot g(\alpha-1)$ colors by Claim \ref{cl:rho}.  Moreover,
since $B_i\to B_j$ for every $i<j$ (by property \ref{en:c3.1} from
Claim \ref{cl:c3-wit}), we need $8\alpha\cdot g(\alpha-1)$ colors to
color the entire chain $C$.  Thus, we can color $D$ with
$(32\alpha+3)\cdot g(\alpha-1) \leq 35 \alpha \cdot g(\alpha-1)$
colors.
\end{cproof}

\begin{claim}\label{cl:g-bound}
The procedure {\sc Color-Digraph}$(D)$ uses $g(\alpha)$ colors.
\end{claim}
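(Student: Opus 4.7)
The plan is a straightforward induction on $\alpha$, where the combinatorial content has already been done in Claim \ref{cl:main}; what remains is essentially just bookkeeping of the recursion $g(\alpha) = 35\alpha \cdot g(\alpha-1)$.

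First I would handle the base case $\alpha = 1$. A $C_3$-free digraph $D$ with $\alpha(D) \le 1$ is a tournament with no directed triangle, hence acyclic, so {\sc Color-Digraph}$(D)$ terminates at the first line with a single color. Since $g(1) = 35^{0} \cdot 1! = 1$, the claim holds.

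For the inductive step, I would assume {\sc Color-Digraph} uses at most $g(\alpha-1)$ colors on every $C_3$-free input whose independence number is at most $\alpha-1$; this is exactly the hypothesis implicit in the definition of $t(\alpha-1, n)$ that underlies Claims \ref{cl:notbag}, \ref{cl:rho}, and \ref{cl:c3-zone}. Then Claim \ref{cl:main} bounds the number of colors used on a digraph with $\alpha(D) \le \alpha$ by $35\alpha \cdot g(\alpha-1)$. A direct computation gives
\[
35\alpha \cdot g(\alpha-1) \;=\; 35\alpha \cdot 35^{\alpha-2}(\alpha-1)! \;=\; 35^{\alpha-1}\alpha! \;=\; g(\alpha),
\]
which is the desired bound.

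There is no real obstacle in this step, since Claim \ref{cl:main} already encapsulates the coloring argument; the only thing to be careful about is that the recursive calls made by {\sc Color-Digraph} during Step 4 and Step 5 are on sets of strictly smaller independence number (at most $\alpha - 1$), so that the inductive hypothesis genuinely applies. This is guaranteed by Claim \ref{cl:notbag}\ref{notbag1} (partitioning a non-bag) and by the use of $N^{o}(v_i)$ sets in Claim \ref{cl:c3-zone}, both of which produce inputs of independence number at most $\alpha - 1$. Combining this with the displayed calculation completes the induction.
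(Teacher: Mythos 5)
Your proposal is correct and follows essentially the same route as the paper: induction on $\alpha$ with the base case $\alpha=1$ handled by acyclicity of $C_3$-free tournaments, and the inductive step reduced to Claim \ref{cl:main} via the computation $35\alpha\cdot 35^{\alpha-2}(\alpha-1)! = 35^{\alpha-1}\alpha!$. The extra remark about the recursive calls having independence number at most $\alpha-1$ is a harmless elaboration of what the paper leaves implicit.
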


\begin{cproof}
We proceed by induction on $\alpha$.  If $\alpha=1$, then we use one
color, since every $C_3$-free tournament is acyclic.  Suppose that the
algorithms colors each $C_3$-free digraph $D'$ where $\alpha(D')\le
\alpha-1$ using at most $g(\alpha-1)$ colors.  Then by Claim
\ref{cl:main}, the procedure {\sc Color-Digraph}$(D)$ colors $D$ with at most
\begin{eqnarray*}
35 \alpha \cdot g(\alpha-1) & = & 35 \alpha \cdot
  35^{\alpha-2}(\alpha - 1)! ~ = ~ 35^{\alpha-1} \alpha! 
\end{eqnarray*}
colors.
\end{cproof}

\begin{claim}\label{cl:run-time-alpha}
{\sc Color-Digraph}$(D)$ runs in time $poly(n)$.
\end{claim}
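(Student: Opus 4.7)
The plan is to prove, by induction on $\alpha = \alpha(D)$, that the total runtime $t(\alpha, n)$ of {\sc Color-Digraph}$(D)$ is bounded by a polynomial in $n$. For the base case $\alpha = 1$, the digraph $D$ is a $C_3$-free tournament and hence acyclic; the algorithm detects this at its initial acyclicity check and terminates after $poly(n)$ work, giving $t(1, n) = poly(n)$.

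For the inductive step, assume $t(\alpha-1, n) = poly(n)$. I would then analyze the five steps of {\sc Color-Digraph}$(D)$ in turn. Steps 1 and 2 take $poly(n)$ time by Claim \ref{cl:findchain} together with the observation that bag- and poor-bag-tests are polynomial. The coloring phases (Steps 4 and 5) invoke Claim \ref{cl:rho} on each poor bag of the chain $C$ and Claim \ref{cl:c3-zone} on each zone $Z_i$; since after Step 3 terminates every zone yields a chain of at most two poor bags, each invocation costs $poly(n) + t(\alpha-1, \cdot)$. Summing over disjoint pieces and applying Claim \ref{cl:runtime} collapses these contributions to $poly(n) + t(\alpha-1, n) = poly(n)$ by the inductive hypothesis.

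The main obstacle is bounding the while loop in Step 3. A single iteration scans up to $t+1 = O(n)$ zones, calls {\sc Find-Chain} on each at cost $poly(n)$ via Claim \ref{cl:findchain}, then updates $C$ and re-partitions into zones in $poly(n)$ time; so one iteration runs in $poly(n)$. The crucial observation is that every successful update strictly grows $C$: it deletes the two bags $B_{i-1}, B_i$ and inserts $k \geq 3$ new poor bags $B_1', \dots, B_k'$, so the chain length jumps from $t$ to $t + k - 2 \geq t + 1$. Since the bags of $C$ are pairwise disjoint and nonempty, the length of $C$ cannot exceed $n$, so the loop halts after at most $n$ iterations, and Step 3 costs $poly(n)$ overall. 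Combining with the analyses of Steps 1, 2, 4, and 5 gives $t(\alpha, n) = poly(n) + t(\alpha-1, n) = poly(n)$, closing the induction.
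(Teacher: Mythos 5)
Your proof is correct and takes essentially the same approach as the paper: polynomial work at each level (via Claims \ref{cl:findchain}, \ref{cl:rho}, \ref{cl:c3-zone} and \ref{cl:runtime}) combined with the recurrence $t(\alpha,n) = poly(n) + t(\alpha-1,n)$, resolved using $\alpha \le n$. Your explicit termination bound for the while loop (each update replaces two bags by $k\ge 3$, so the chain strictly grows and there are at most $n$ iterations) is a detail the paper leaves implicit, but it fits the same argument rather than constituting a different route.
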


\begin{cproof}
We now analyze the running time.  Finding a maximal chain $C$ takes
$poly(n)$ time, as does the procedure of partitioning the vertices in
$V(D) \setminus{C}$ into zones.  Once we have found this partitioning,
we can color the vertices in $C$ in time $poly(n) + t(\alpha-1, |C|)$
using Claim \ref{cl:rho}, and we can color each zone $Z_i$ in time
$poly(n) + (\alpha-1, |Z_i|)$ using Claim \ref{cl:c3-zone} and
Corollary \ref{cor:chain-col}.  So applying Claim \ref{cl:runtime},
the total running time of {\sc Color-Digraph}$(D)$ is at most $poly(n) + t(\alpha-1,n)$.
This leads to the following recurrence relation:
\begin{eqnarray*}
t(\alpha,n) & = & poly(n) + t(\alpha-1,n)\\
& = & \alpha \cdot poly(n).
\end{eqnarray*}
Since $\alpha \leq n$, we have that $t(\alpha,n) = poly(n)$.

We note that our algorithm is actually just partitioning $V(D)$ into
disjoint subsets, where each subset has independence number at most
$\alpha-1$.  In other words, it first partitions the vertices in
$V(D)$ into sets where each set is a poor bag or not a bag (for
example, a zone $Z_i$ is either a poor bag or not a bag).  Then, it
further partitions these sets into sets with independence number at
most most $\alpha-1$.  (For example, by Claim \ref{cl:notbag}, a set
that is not a bag can be partitioned into three disjoint sets, each
with independence number at most $\alpha-1$.  Similarly, in the proof
of Claim \ref{cl:rho}, a poor bag is partitioned into $8 \alpha$
disjoint sets, each with independence number at most $\alpha-1$.)
Once the subgraphs on these induced subsets are colored (recursively)
using $g(\alpha-1)$ colors, then certain subsets are allowed to use
the same color palette, and these color palettes can be coordinated in
time $poly(n)$.  
The initial partitioning procedure and the final coordinating procedure require
$poly(n)$ time, while the recursive coloring requires $t(\alpha-1,n)$ time.  
\end{cproof}

Theorem \ref{thm:c3-res} follows from Claims \ref{cl:main},
\ref{cl:g-bound} and \ref{cl:run-time-alpha}.

Finally, we remark that Theorem \ref{thm:c3} yields a bound on the
size of a maximum acyclic subgraph of a $C_3$-free digraph $D$ in
terms of $\alpha$.

\begin{theorem}
Let $D=(V,A)$ be a $C_3$-free digraph where $\alpha(D) \leq \alpha$.
Then $D$ contains an acyclic subset of arcs, $A' \subseteq A$, with
cardinality $|A'| \geq |A| \cdot (\frac{1}{2} + c_{\alpha})$, where
$c_{\alpha}$ is a constant depending on $\alpha$.
\end{theorem}

\begin{proof}
Since $D$ can be colored with $g(\alpha)$ colors, there is a color
class containing at least $n/g(\alpha)$ vertices, and by Turan's
Theorem, its induced subgraph contains at least
\begin{eqnarray*}
\frac{n}{2 \cdot g(\alpha)} \left(\frac{n}{\alpha \cdot g(\alpha)} - 1\right)
\end{eqnarray*}
arcs.  Thus, $D$ contains a maximum acyclic subgraph of size at least
\begin{eqnarray*}
\frac{|A|}{2} + 
\frac{n}{4 \cdot g(\alpha)}\left(\frac{n}{\alpha \cdot
  g(\alpha)}-1 \right).
\end{eqnarray*}
Let $c$ be an absolute constant.  Then
\begin{eqnarray*}
c_{\alpha} = \frac{1}{c \cdot \alpha \cdot g(\alpha)^2}
\end{eqnarray*}
satisfies the theorem.
\end{proof}

\end{document}